\documentclass[twocolumn]{autart}    

\usepackage[draft]{hyperref}
\usepackage{amsfonts}
\usepackage{amsmath}
\usepackage{graphicx}
\usepackage{algorithm}
\usepackage{algpseudocode}
\usepackage{placeins}
\usepackage{tikz}
\usepackage{tabularx}
\usepackage{array}
\usepackage[authoryear]{natbib}

\newtheorem{proposition}{\textbf{Proposition}}
\newtheorem{lemma}{\textbf{Lemma}}
\newtheorem{assumption}{\textbf{Assumption}}
\newtheorem{theorem}{\textbf{Theorem}}
\newtheorem{corollary}{\textbf{Corollary}}
\newtheorem{definition}{\textbf{Definition}}
\newtheorem{example}{Example}
\newtheorem{remark}{Remark}

\begin{document}

\begin{frontmatter}

\title{Analysis of Stability and Performance of Economic Model Predictive Control with State-Independent Costs} 


\author[Electrical]{Alireza Arastou}\ead{aarastou@student.unimelb.edu.au},    
\author[Math]{Ye Wang}\ead{ye.wang@unimelb.edu.au},               
\author[Electrical]{Erik Weyer}\ead{ewey@unimelb.edu.au}  

\address[Electrical]{Department of Electrical and Electronic Engineering, University of Melbourne, VIC 3010, Australia}  
\address[Math]{School of Mathematics and Statistics, University of Melbourne, VIC 3010, Australia}             
          
\begin{keyword}                           
EMPC, Asymptotic stability, Convergence, Periodic systems               
\end{keyword}                             

\begin{abstract}                          
This paper studies economic model predictive Control (EMPC) schemes, where the stage cost depends only on control inputs. Such problems arise in applications like water distribution networks and differ from standard EMPC since multiple steady states can correspond to the unique optimal steady input. We show that, under a strict dissipativity assumption related to the set of optimal steady states, the closed-loop trajectories converge asymptotically to this set, ensuring convergence of the economic cost to the optimal steady state cost. To enhance Lyapunov stability, we propose a modified stage cost that preserves the optimal input while guaranteeing asymptotic stability of a specific equilibrium with a slight performance loss. The approach is further extended to EMPC of a class of linear systems with periodic costs and disturbances by lifting it to a multi-step EMPC problem for periodic operations. A case study with a water distribution network demonstrates the effectiveness of the proposed methods in achieving both asymptotic convergence and stability.
\end{abstract}

\end{frontmatter}

\section{Introduction} \label{sec: Introduction}
Economic model predictive control (EMPC) is a type of MPC prioritising economic objectives over tracking references. EMPC enables controllers to directly optimise system operation, making it particularly suitable for a variety of industrial applications, such as chemical processes \citep{ellis2017economic}, energy systems \citep{hu2023economic}, water distribution networks (WDNs) \citep{wang2016periodic}, and traffic systems \citep{he2024dynamic}.

A key concept in EMPC is the optimal steady state problem. It was shown in \citep{angeli2011average,amrit2011economic,rawlings2017model} that the optimal steady state problem provides a pair of optimal state and input trajectory such that the economic cost is minimised and the system is in the steady state. In many applications, such as pumping energy minimisation in WDNs \citep{arastou2025optimization,ocampo2012hierarchical}, the stage cost depends only on the control input, i.e., $\ell(u)$. Assuming strict convexity of $\ell(u)$, the optimal input from steady state problem, denoted by $u^s$ is unique; however, multiple or an infinite number of states can satisfy steady state system dynamics with $u^s$ as the input. Consequently, it is desirable to show asymptotic convergence of the closed-loop trajectories given by EMPC scheme to the set of steady states, not a unique point. Existing EMPC formulations that rely on terminal sets or stability notions centred on a single steady state \citep{amrit2011economic,Diehl2011,angeli2011average} are therefore not directly applicable, motivating the need for alternative approaches.

Motivated by the above challenge, this paper uses strict dissipativity with respect to the set of steady states to analyse EMPC closed-loop behaviour. The main contributions are:

\begin{itemize}
    \item Theoretical guarantees on asymptotic convergence into the set of optimal steady states under a strict dissipativity assumption;
    
    \vspace{0.1 cm}
    
    \item A method for showing asymptotic stability of a specific equilibrium point by modifying the stage cost; 
    
    \vspace{0.1 cm}
    
    \item Extension of the proposed method to linear systems with periodic cost and disturbances.
\end{itemize}

These contributions build upon and extend the existing EMPC literature. The foundational work \citep{angeli2011average} established that EMPC ensures asymptotic average performance no worse than the optimal steady state solution. A Lyapunov function was proposed in \citep{Diehl2011} for linear systems with an EMPC-based control system under a strong duality assumption. This was generalised in \citep{angeli2011average}, where they showed that under a strict dissipativity assumption, the solution of the optimal steady state problem is an asymptotically stable equilibrium point for the system under the EMPC control. Our first contribution generalises this viewpoint by shifting the stability target from a single steady state to a set of optimal steady states.

Dissipativity has since become a critical part of EMPC theory. A comprehensive survey of dissipativity, emphasising its central role in linking EMPC stability, turnpike behaviour, and performance guarantees, was given by \citep{Grune_Dissipativity}. The concept of turnpikes, originating in optimal control, refers to the property that optimal trajectories spend most of their time near steady state or periodic optimal regimes. The relation between dissipativity and turnpike properties in EMPC was established in \citep{grune2016relation}. Moreover, \citep{GRUNE20141187} presented conditions on strict dissipativity and controllability such that the closed-loop system is practically asymptotically stable without terminal constraints. A new dissipativity condition for proving asymptotic stability of discounted EMPC was given in \citep{zanon2022new}. The presented dissipativity condition took into account the discount factor, and a new Lyapunov function was defined correspondingly. Our results make use of a modified stage cost and a strict dissipativity condition with respect to the set of steady states to guarantee asymptotic convergence to the optimal steady state cost and asymptotic stability. 

Alternative approaches using average constraints were introduced in \citep{angeli2011average} to ensure a desired average performance. The average constraints were also used in \citep{MULLER_Average} to enforce asymptotic convergence of the closed-loop system to the optimal steady states in the case that dissipativity assumption is not met. While the aforementioned average constraints guarantee the desired average performance asymptotically, the authors in \citep{MULLER_Transient_Average} used transient average constraints to ensure the desired performance within a finite time. Nevertheless, the use of average constraints to enforce asymptotic convergence can deteriorate the transient performance considerably \citep{MULLER_Average} and can result in infeasibility \citep{MULLER_Transient_Average}. In contrast, this work relies on a dissipativity-based condition that avoids imposing explicit average constraints while still guaranteeing convergence properties.

Research on periodic operation has become increasingly important, reflecting real-world applications where economically optimal trajectories are inherently time-varying. An EMPC formulation with a periodic terminal constraint was introduced in \citep{angeli2011average}, proving that the closed-loop average performance is no worse than that of the optimal periodic orbit. Dissipativity notions in periodic EMPC were presented in \citep{kohler2018periodic, zanon2016periodic}. The periodic EMPC strategy was extended in \citep{muller2016economic} to a multi-step EMPC without terminal constraints, showing convergence to periodic operation under strict dissipativity assumption. In \citep{kohler2020periodic} an artificial reference was used in an EMPC scheme to guarantee recursive feasibility and performance bounds for periodic operation. This method avoided the need for precomputing periodic trajectories. Furthermore, \citep{wang2016periodic} developed periodicity-constrained EMPC for linear systems, optimising over all periodic trajectories through the current state, thereby ensuring recursive feasibility and convergence in convex settings. Our third contribution connects this literature with dissipativity-based convergence to steady state sets, showing how periodic disturbances and costs can be naturally incorporated.

The proposed method is applied to a case study with a WDN, and the asymptotic convergence and stability are shown under suitable conditions. The remainder of the paper is organised as follows: The problem statement is given in Section \ref{sec: problem statement}. Asymptotic convergence and stability analysis are presented in Section \ref{sec: performance and stability analysis}. Section \ref{sec: application to periodic} gives the results for linear systems with periodic stage cost and disturbances. Simulation results for a WDN are given in Section \ref{sec: simulation results}, and Section \ref{sec: conclusion} gives conclusions.

\section{Problem Statement} \label{sec: problem statement}
This section gives the system dynamics, control objective, and constraints to be used in the EMPC scheme.
We consider discrete time nonlinear systems in the form of
\begin{equation} \label{eq: Nonlinear Model dynamics}
x_{t+1}=f(x_t,u_t),
\end{equation}
where $f(\cdot)$ represents the system dynamics, $x_t \in \mathcal{X} \subseteq \mathbb{R}^{n}$ denotes states, and $u_t \in \mathcal{U} \subseteq \mathbb{R}^{m}$ denotes control inputs at time $t$. $\mathcal{X}$ and $\mathcal{U}$ are constraint sets on states and inputs. 
\begin{assumption} \label{ass: continuity of model}
    The system dynamics $f(\cdot,\cdot)$ in \eqref{eq: Nonlinear Model dynamics} is continuous on $\mathcal{X}\times \mathcal{U}$.
\end{assumption}
\begin{assumption} \label{ass: compact and convex constraint}
    $\mathcal{X}$ and $\mathcal{U}$ are compact and convex.
\end{assumption}

Motivated by WDNs, a pure economic control objective is considered where the stage cost is a function of control inputs only. It is denoted by $\ell(u): \mathcal{U} \rightarrow \mathbb{R}$. 
\begin{assumption} \label{Ass: Strict convexity of l(u)}
    The stage cost $\ell(\cdot)$ is continuous and strictly convex.
\end{assumption}

Next, the set of optimal steady-state trajectories can be obtained by solving the following optimisation problem:
\begin{subequations} \label{eq: SS equations}
    \begin{align}
        & \min_{x,u} \ell(u), \label{eq: SS Objective function}\\
        \text{subject to:}& \nonumber \\
        & x=f(x,u), \label{eq: model dynamics SS}\\
        & x \in \mathcal{X}, \hspace{0.5 cm} u \in \mathcal{U}.\label{eq: state and input constraints SS}
    \end{align}
\end{subequations}
\begin{assumption} \label{ass: existance of steady state solution}
    At least one solution $(x,u)$ to the steady-state problem in \eqref{eq: SS equations} exists.
\end{assumption}
Denote the optimal cost in steady-state given by \eqref{eq: SS equations} by $\ell^s$. 
\begin{remark}
The system dynamics in \eqref{eq: Nonlinear Model dynamics} and \eqref{eq: model dynamics SS} are nonconvex in general. Hence, the solution of \eqref{eq: SS equations} might not be unique. If the system dynamics is convex, then the optimal input, denoted by $u^s$, is unique. In this case, any state meeting
\begin{equation}
    x=f(x,u^s), \label{eq: steady state model}
\end{equation} 
is an optimal steady state as it gives the optimal steady state cost $\ell^s$. In general, \eqref{eq: steady state model} can have a unique, multiple or infinite number of solutions.    
\end{remark}

The set of optimal steady state solutions is characterised by
\begin{equation} \label{eq: Steady state set}
    \mathcal{Z}^s=\{(x,u) \in \mathcal{X} \times \mathcal{U}\hspace{0.1 cm} | \hspace{0.1 cm}x=f(x,u), \hspace{0.1 cm} \ell(u)=\ell^s\},
\end{equation}
Also, denote the projection of $\mathcal{Z}^s$ onto $\mathcal{X}$ by $\mathcal{X}^s$, i.e.,
\begin{equation} \label{eq: definition of Xs}
    \mathcal{X}^s=\{x \in \mathcal{X}\hspace{0.1 cm} | \hspace{0.1 cm} \exists u \in \mathcal{U}, \hspace{0.1 cm} \text{s.t.} \hspace{0.1 cm} (x,u) \in \mathcal{Z}^s \}.
\end{equation}
The EMPC problem at time $t$ is given by
\begin{subequations} \label{eq: EMPC Formulation}
    \begin{align}
         \min_{u_{0|t},\ldots,u_{N-1|t}} &\sum_{k=0}^{N-1} \ell(u_{k|t}), \label{eq: objective function in control problem}\\
        \text{subject to:} \hspace{0.3 cm} &k=0,1,\ldots,N-1 \nonumber\\
        & x_{0|t}=x_t, \label{eq: initialization in EMPC}\\
        & x_{k+1|t}=f(x_{k|t},u_{k|t}), \label{eq: Model dynamics in EMPC}\\
        & x_{k|t} \in \mathcal{X},\label{eq: State constraints}\\
        &u_{k|t} \in \mathcal{U},\label{eq: Input constraints}\\
        & x_{N|t} \in \mathcal{X}^s. \label{eq: Terminal constraint}
    \end{align}
\end{subequations}
The formulation in \eqref{eq: EMPC Formulation} requires that the terminal state belongs to the optimal steady-state set $\mathcal{X}^s$. Denote the solution to \eqref{eq: EMPC Formulation} by
\[\mathbf{u}_t=[u_{0|t}^\top,\ldots,u_{N-1|t}^\top]^\top.\]
The admissible set of $(x,\mathbf{u})$ pair at time $t=0$ is 
\begin{equation} \label{eq: Admissble pair}
\begin{split}
&\mathcal{Z}_{N}=\big\{(x,\mathbf{u}) \in \mathcal{X}\times \mathcal{U}^N \hspace{0.1 cm}|\hspace{0.1 cm} \exists \hspace{0.1 cm} x_{k}\in\mathcal{X},\hspace{0.1 cm} k=1,\ldots,N-1\\ &\text{such that}\hspace{ 0.1 cm}x_{0}=x, \hspace{0.1 cm} x_{k+1}=f(x_{k},u_{k}),
\hspace{0.1 cm} x_{N}\in \mathcal{X}^s\big\}, 
\end{split}
\end{equation}
and the set of admissible initial states is
\begin{equation} \label{eq: admissible initial states}
    \mathcal{X}_{N}=\{x \in \mathcal{X} \hspace{0.1 cm}| \hspace{0.1 cm} \exists \mathbf{u} \hspace{0.1 cm} \text{such that} \hspace{0.1 cm} (x,\mathbf{u})\in\mathcal{Z}_{N}\}.
\end{equation}
It is desirable to steer the system in \eqref{eq: Nonlinear Model dynamics} to an equilibrium in $\mathcal{Z}^s$ using the EMPC strategy. This way, it is guaranteed that the stage cost $\ell(u)$ will also converge to $\ell^s$. Note that we are not requiring convergence to a specific point within $\mathcal{Z}^s$. The following example highlights the difference between the problem in \eqref{eq: EMPC Formulation} and traditional EMPC schemes. 

\begin{example} \label{ex: System with integrators}
    Consider an integrator dynamic system $x_{k+1}=x_k+u_k$. Let $\mathcal{X}=\{x\hspace{0.1 cm}|\hspace{0.1 cm}-1\leq x \leq 1\}$, $\mathcal{U}=\{u\hspace{0.1 cm}|\hspace{0.1 cm}-1\leq u \leq 1\}$ and $\ell(u)=u^2$. By solving the problem \eqref{eq: SS equations}, the optimal input $u^s=0$ is obtained; however, any feasible $x \in \mathcal{X}$ is an equilibrium. The main goal is to converge to the minimum steady state cost $\ell^s=0$. This is different from traditional EMPC schemes, where states are steered toward a unique steady state $x^s$.  
\end{example}  

We now provide a rigorous analysis of the closed-loop properties of the scheme. Section \ref{sec: performance and stability analysis} develops conditions ensuring recursive feasibility, asymptotic convergence of the closed-loop trajectories, and stability guarantees.

\section{Asymptotic Performance and Stability Analysis} \label{sec: performance and stability analysis}

This section is divided into two parts. First, we prove
recursive feasibility of the EMPC and show that the average economic cost in the long run is at least as good as $\ell^s$. Second, we establish asymptotic convergence to the set of optimal steady states $\mathcal{X}^s$ and demonstrate asymptotic stability via a Lyapunov-type argument using a modified stage cost.

\subsection{Recursive feasibility and performance analysis}

This subsection establishes recursive feasibility and provides asymptotic average performance guarantees of the closed-loop system. The detailed statements and proofs are given in the following propositions. 

\begin{proposition} \label{Prop: Recursive feasibility centralised}
    Suppose that Assumption \ref{ass: existance of steady state solution} holds. Given a feasible solution at time $t_0$, the EMPC problem in \eqref{eq: EMPC Formulation} is recursively feasible for all $t\geq t_0$.  
\end{proposition}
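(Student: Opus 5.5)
We argue by induction on $t\ge t_0$, using the standard shifted-sequence construction. The only point that needs care is what input to append at the end of the horizon, and this is where the structure of $\mathcal{X}^s$ in \eqref{eq: definition of Xs} comes in.

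Suppose \eqref{eq: EMPC Formulation} is feasible at time $t$. Let $\mathbf{u}_t=[u_{0|t}^\top,\ldots,u_{N-1|t}^\top]^\top$ be its solution, with predicted states $x_{0|t}=x_t,\ldots,x_{N|t}$. These satisfy \eqref{eq: Model dynamics in EMPC}--\eqref{eq: Input constraints}, and $x_{N|t}\in\mathcal{X}^s$. The closed loop applies $u_{0|t}$. Because the model \eqref{eq: Nonlinear Model dynamics} is exact, $x_{t+1}=f(x_t,u_{0|t})=x_{1|t}$.

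\textbf{Candidate at $t+1$.} The key step is choosing the appended input. Since $x_{N|t}\in\mathcal{X}^s$, the definition \eqref{eq: definition of Xs} gives some $\bar u\in\mathcal{U}$ with $(x_{N|t},\bar u)\in\mathcal{Z}^s$. By \eqref{eq: Steady state set}, this means $x_{N|t}=f(x_{N|t},\bar u)$, $x_{N|t}\in\mathcal{X}$ and $\ell(\bar u)=\ell^s$. (Under strict convexity one could take $\bar u=u^s$, but the argument only needs existence.) Assumption \ref{ass: existance of steady state solution} guarantees that $\mathcal{Z}^s$, and hence $\mathcal{X}^s$, is nonempty, so the terminal constraint \eqref{eq: Terminal constraint} is meaningful. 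The existence of $\bar u$, however, comes directly from the membership $x_{N|t}\in\mathcal{X}^s$.

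The candidate input sequence at time $t+1$ is
\[
\tilde{\mathbf{u}}_{t+1}=[u_{1|t}^\top,\ldots,u_{N-1|t}^\top,\bar u^\top]^\top .
\]

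\textbf{Checking the constraints.} Starting from $x_{0|t+1}=x_{t+1}=x_{1|t}$, the candidate generates the predicted states $x_{k|t+1}=x_{k+1|t}$ for $k=0,\ldots,N-1$. These lie in $\mathcal{X}$ by feasibility at time $t$. The final state is
\[
x_{N|t+1}=f(x_{N|t},\bar u)=x_{N|t}\in\mathcal{X}^s\subseteq\mathcal{X}.
\]
The inputs $u_{1|t},\ldots,u_{N-1|t}$ and $\bar u$ all lie in $\mathcal{U}$. Hence constraints \eqref{eq: initialization in EMPC}--\eqref{eq: Terminal constraint} hold at $t+1$, i.e., $(x_{t+1},\tilde{\mathbf{u}}_{t+1})\in\mathcal{Z}_N$.

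\textbf{Conclusion.} The feasible set at $t+1$ is therefore nonempty. Induction from the feasible time $t_0$ then yields feasibility for all $t\ge t_0$. If the statement is also meant to include existence of a minimiser at each time, this follows from Assumptions \ref{ass: continuity of model}--\ref{Ass: Strict convexity of l(u)}: the feasible input set is closed and bounded in $\mathcal{U}^N$ and the cost is continuous. Closedness of that set requires $\mathcal{X}^s$ to be closed, which holds because $\mathcal{Z}^s$ is closed under continuity of $f$ and $\ell$ and is the preimage-type set in \eqref{eq: Steady state set}. Projecting a compact set keeps it compact, so $\mathcal{X}^s$ is compact.
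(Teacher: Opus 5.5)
Your proof is correct and is essentially the paper's argument. You shift the time-$t$ solution, append an input $\bar u$ with $(x_{N|t},\bar u)\in\mathcal{Z}^s$ so the terminal state stays at $x_{N|t}\in\mathcal{X}^s$, and conclude by induction; your closing remark on existence of a minimiser (via compactness of $\mathcal{Z}^s$, hence of $\mathcal{X}^s$ and of the feasible input set) is a valid addition that the paper leaves implicit.
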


\noindent \textbf{Proof}. Denote a solution to \eqref{eq: EMPC Formulation} at time $t=t_0$ and the corresponding states by 
\begin{equation} \label{eq: solution at time t0}
    \left\{x^0_{0|t}, \hspace{0.1 cm} x^0_{1|t}, \hspace{0.1 cm} \ldots \hspace{0.1 cm} x^0_{N|t}\right\}, \hspace{0.1 cm} \left\{u^0_{0|t}, \hspace{0.1 cm} \ldots \hspace{0.1 cm} u^0_{N-1|t}\right\} 
\end{equation}
An admissible pair of states and inputs at time $t+1$ is obtained by shifting \eqref{eq: solution at time t0} as given below
\begin{equation} \label{eq: Shifted solution}
\begin{split}
    &\left\{x^0_{1|t}, \hspace{0.1 cm} \ldots, \hspace{0.1 cm} x^0_{N|t},\hspace{0.1 cm} x^0_{N|t}\right\},\\
    &\left\{u^0_{1|t}, \hspace{0.1 cm} \ldots, \hspace{0.1 cm} u^0_{N-1|t}, \hspace{0.1 cm} u^s\right\},
\end{split}
\end{equation}
where $u^s$ is an optimal input such that $(x^0_{N|t},u^s) \in \mathcal{Z}^s$. Using the shifted sequences, constraints \eqref{eq: Model dynamics in EMPC}-\eqref{eq: Input constraints} are satisfied with the optimal solution at time $t$. From \eqref{eq: Steady state set}, it follows that $x^0_{N|t} \in \mathcal{X}^s \subseteq \mathcal{X}$ remains unchanged if $u^s$ is applied to the system. Thus, the terminal state also meets \eqref{eq: Terminal constraint}. Therefore, the EMPC problem in \eqref{eq: EMPC Formulation} is recursively feasible at any time $t\geq t_0$ by induction. $\square$

\begin{proposition}
    Let Assumptions \ref{ass: continuity of model}-\ref{Ass: Strict convexity of l(u)} hold. The open-loop cost of the EMPC problem in \eqref{eq: EMPC Formulation} is no worse than the cost obtained when enforcing an equality terminal constraint at a given fixed steady state, i.e., $x_{N|t}=x^s \in \mathcal{X}^s$. Moreover, the system in \eqref{eq: Nonlinear Model dynamics} in closed-loop with the control generated by the EMPC scheme in \eqref{eq: EMPC Formulation} has an asymptotic average performance no worse than $\ell^s$, i.e.,
    \[
    \limsup_{T\rightarrow \infty}{\frac{\displaystyle\sum_{t=0}^{T} \ell(u_{0|t}^0)}{T+1}} \leq \ell^s.
    \]
\end{proposition}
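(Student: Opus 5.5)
The first claim follows from a feasible-set inclusion. Fix $x^s \in \mathcal{X}^s$ and consider \eqref{eq: EMPC Formulation} with \eqref{eq: Terminal constraint} replaced by $x_{N|t}=x^s$. Every input sequence feasible for that problem satisfies all constraints of \eqref{eq: EMPC Formulation}, because $\{x^s\}\subseteq\mathcal{X}^s$. The objective \eqref{eq: objective function in control problem} is the same in both problems. Minimising it over the larger feasible set of \eqref{eq: EMPC Formulation} therefore gives an optimal value no larger than the equality-constrained one. Existence of minimisers in both problems comes from Assumptions \ref{ass: continuity of model}--\ref{Ass: Strict convexity of l(u)}: $f$ and $\ell$ are continuous and $\mathcal{X},\mathcal{U}$ are compact. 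Hence the feasible input sets are closed subsets of the compact set $\mathcal{U}^N$, and $\ell$ attains its minimum on them. For the set $\mathcal{X}^s$ to be closed, I would also note that $\mathcal{Z}^s$ is closed, since it is a level set of continuous maps intersected with a compact set, and its projection is therefore compact.

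For the average performance I will use the standard telescoping argument of \citep{angeli2011average}, built on the shifted candidate from Proposition~\ref{Prop: Recursive feasibility centralised}. Let $V_N(x_t)=\sum_{k=0}^{N-1}\ell(u^0_{k|t})$ be the optimal value at time $t$. At time $t+1$ the shifted sequence in \eqref{eq: Shifted solution} is feasible. Its terminal input $u^s$ is chosen with $(x^0_{N|t},u^s)\in\mathcal{Z}^s$, so $\ell(u^s)=\ell^s$. Optimality then gives
\[
V_N(x_{t+1}) \le V_N(x_t) - \ell(u^0_{0|t}) + \ell^s,
\]
which rearranges to $\ell(u^0_{0|t})-\ell^s \le V_N(x_t)-V_N(x_{t+1})$. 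Summing from $t=0$ to $T$ telescopes the right-hand side:
\[
\sum_{t=0}^{T}\bigl(\ell(u^0_{0|t})-\ell^s\bigr)\le V_N(x_0)-V_N(x_{T+1}).
\]

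The remaining step is to bound $V_N$ uniformly. Since $\ell$ is continuous on the compact set $\mathcal{U}$, it satisfies $|\ell(u)|\le M$ there, and so $|V_N(x)|\le NM$ for every feasible $x$. The right-hand side above is therefore at most $2NM$, independent of $T$. Dividing by $T+1$ and taking $\limsup_{T\to\infty}$ gives the stated bound. This step needs recursive feasibility from the initial time, which Proposition~\ref{Prop: Recursive feasibility centralised} provides under Assumption~\ref{ass: existance of steady state solution}, assuming $x_0\in\mathcal{X}_N$.

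The main subtlety is the choice of $u^s$ in the shift. If the optimal steady input is not unique, one must still pick $u^s$ with $(x^0_{N|t},u^s)\in\mathcal{Z}^s$. Such a $u^s$ exists by the definition \eqref{eq: definition of Xs} of $\mathcal{X}^s$, and it gives exactly $\ell(u^s)=\ell^s$, which is all the argument uses.
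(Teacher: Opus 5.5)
Your proposal is correct and follows essentially the same route as the paper. The first claim comes from the feasible-set inclusion $\{x^s\}\subseteq\mathcal{X}^s$, and the average-performance bound comes from the shifted candidate of Proposition~\ref{Prop: Recursive feasibility centralised}, telescoping, and boundedness of the optimal cost; your direct bound $\sum_{t=0}^{T}(\ell(u^0_{0|t})-\ell^s)\le 2NM$ is a slightly cleaner form of the paper's $\liminf$/$\limsup$ step, and your remarks on compactness of $\mathcal{X}^s$ and the need for $x_0\in\mathcal{X}_N$ (with Assumption~\ref{ass: existance of steady state solution}) state explicitly what the paper uses implicitly.
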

\noindent \textbf{Proof}. The admissible pair of $(x,\mathbf{u}_t)$ for the EMPC problem with the equality constraint $x_{N|t}=x^s \in \mathcal{X}^s$ is $\subseteq \mathcal{Z}_N$ in \eqref{eq: Admissble pair}, since both problems share the same model \eqref{eq: Model dynamics in EMPC}, state, and input constraints in \eqref{eq: State constraints}-\eqref{eq: Input constraints} and $x^s \in \mathcal{X}^s$. Thus, the solution of the optimisation problem in \eqref{eq: EMPC Formulation} is no worse than the solution with a single point as the terminal set. Moreover, denote the optimal value of the cost in \eqref{eq: objective function in control problem} at time $t$ by 
\[J(\mathbf{u}_t^0)=\sum_{k=0}^{N-1} \ell(u^0_{k|t}).\] 
Using the shifted sequences denoted by $\hat{\mathbf{u}}_{t+1}$ in \eqref{eq: Shifted solution} we have
\begin{equation}
        J(\mathbf{u}_{t+1}^0)-J(\mathbf{u}_{t}^0) \leq J(\hat{\mathbf{u}}_{t+1})-J(\mathbf{u}_{t}^0) \nonumber = \ell^s-\ell(u^0_{0|t}).
\end{equation}
Taking asymptotic average on both sides gives
\begin{equation} \label{eq: liminf inequality}
\begin{split}
    \liminf_{T\rightarrow \infty}&{\frac{\sum_{t=0}^{T} J(\mathbf{u}_{t+1}^0)-J(\mathbf{u}_{t}^0)}{T+1}}\\
    &\leq \ell^s-\limsup_{T\rightarrow \infty}{\frac{\sum_{t=0}^{T} \ell(u_{0|t}^0)}{T+1}}.
\end{split}
\end{equation}
According to Assumptions \ref{ass: continuity of model}-\ref{ass: compact and convex constraint} (continuity of $\ell(\cdot)$ and compactness of $\mathcal{U}^N$), $J(\cdot)$ is bounded by extreme value theorem, that is $J(\cdot)$ must attain a maximum and a minimum within $\mathcal{U}^N$; thus, 
\begin{equation}
    \liminf_{T\rightarrow \infty}{\frac{J(\mathbf{u}_{T+1}^0)-J(\mathbf{u}_{0}^0)}{T+1}}=0. \nonumber
\end{equation}
Hence, it can be deduced from \eqref{eq: liminf inequality} that
\[
\limsup_{T\rightarrow \infty}{\frac{\displaystyle\sum_{t=0}^{T} \ell(u_{0|t}^0)}{T+1}} \leq \ell^s,
\]
which gives the asymptotic average performance. $\square$


\subsection{Asymptotic convergence to the optimal cost in steady state} \label{sec: Asymptotic convergence}

Asymptotic convergence to $\ell^s$ and asymptotic stability analysis using a modified stage cost are given in this section. We first recall the definitions for asymptotic convergence and stability.

\begin{definition} \label{def: asymptotic convergence}
    The distance between a point $\varphi \in \mathbb{R}^n$ and a nonempty compact set $\mathcal{A}$ is denoted by $||\varphi||_{\mathcal{A}}= \min_{z\in \mathcal{A}} ||\varphi-z||_2$. A sequence $\varphi_t$ asymptotically converges to a set $\mathcal{A}$ if $\lim_{t \rightarrow \infty} ||\varphi_t||_{\mathcal{A}}=0$. 
\end{definition}
\begin{definition} (\citep{Khalil:1173048})
    A continuous function $\alpha: [0, a) \rightarrow [0, \infty)$ belongs to class $\mathcal{K}$ if $\alpha(0)=0$ and it is strictly increasing. 
\end{definition}
\begin{definition}(\citep{Khalil:1173048,rawlings2017model}) \label{def: Lyapunov function definition}
Let $\mathcal{A}\subseteq \bar{\mathcal{X}} \subset \mathbb{R}^n$ be a closed set of equilibria of the system $x_{t+1}=f(x_t)$. A continuous function $V : \bar{\mathcal{X}} \to \mathbb{R}_{\geq 0}$ is called a Lyapunov function on $\bar{\mathcal{X}}$ 
if there exist functions $\alpha_1,\alpha_2,\in \mathcal{K}$ 
and a continuous positive definite function $\alpha_3$ such that for all $x \in \bar{\mathcal{X}}$
\begin{equation}
    \begin{split}
        & \alpha_1(\|x\|_{\mathcal{A}}) \;\le\; V(x) \;\le\; \alpha_2(\|x\|_\mathcal{A}),\\
        & V(f(x)) - V(x) \;\le\; -\alpha_3(\|x\|_{\mathcal{A}}). \nonumber
    \end{split}
\end{equation}
\end{definition}

\begin{lemma}(\citep{Khalil:1173048}) \label{lem: AS theorem from Khalil}
Suppose $\bar{\mathcal{X}} \subset \mathbb R^n$ is positively invariant for the system $x_{t+1} = f(x_t)$, i.e., if $x_0 \in \bar{\mathcal{X}}$ then $x_t \in \bar{\mathcal{X}},$ $ \forall t \in \mathbb{N}$. Let $\mathcal A \subseteq \bar{\mathcal{X}}$ be a closed set of equilibria, i.e.\ 
$f(a)=a$ for all $a\in\mathcal A$. If there exists a Lyapunov function in $\bar{\mathcal{X}}$ with respect to $\mathcal A$, 
then $\mathcal A$ is asymptotically stable in $\bar{\mathcal{X}}$.
\end{lemma}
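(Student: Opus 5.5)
The statement is the standard discrete-time Lyapunov theorem for a closed set of equilibria, taken from \citep{Khalil:1173048}. I would prove it in two parts: Lyapunov stability of $\mathcal A$, then attractivity, using only the sandwich bounds and the strict decrease condition of Definition \ref{def: Lyapunov function definition}. Positive invariance of $\bar{\mathcal X}$ ensures that the bounds hold along every trajectory starting in $\bar{\mathcal X}$.

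\textbf{Stability.} Fix $\varepsilon>0$, taken small enough that $\alpha_1(\varepsilon)$ is defined. Choose $\delta>0$ with $\alpha_2(\delta)<\alpha_1(\varepsilon)$; this is possible because $\alpha_2$ is continuous with $\alpha_2(0)=0$. Let $x_0\in\bar{\mathcal X}$ with $\|x_0\|_{\mathcal A}<\delta$. Since $\alpha_3\ge 0$, the sequence $V(x_t)$ is nonincreasing, so
\[\alpha_1(\|x_t\|_{\mathcal A})\le V(x_t)\le V(x_0)\le \alpha_2(\delta)<\alpha_1(\varepsilon).\]
Strict monotonicity of $\alpha_1$ then gives $\|x_t\|_{\mathcal A}<\varepsilon$ for all $t$.

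\textbf{Attractivity.} Summing the decrease inequality from $0$ to $T$ gives
\[\sum_{t=0}^{T}\alpha_3(\|x_t\|_{\mathcal A})\le V(x_0)-V(x_{T+1})\le V(x_0).\]
Hence $\sum_t \alpha_3(\|x_t\|_{\mathcal A})<\infty$ and $\alpha_3(\|x_t\|_{\mathcal A})\to 0$. It remains to conclude $\|x_t\|_{\mathcal A}\to0$, which I expect to be the main obstacle. Positive definiteness and continuity of $\alpha_3$ do not by themselves rule out $\alpha_3(r)\to 0$ as $r$ grows, so an extra compactness argument is needed.

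\textbf{Handling the obstacle.} From the stability step the trajectory stays in a bounded region, namely $\|x_t\|_{\mathcal A}\le \alpha_1^{-1}(V(x_0))$, or simply in a compact $\bar{\mathcal X}$ as in the application where $\mathcal X$ is compact. So $\|x_t\|_{\mathcal A}$ takes values in some compact interval $[0,R]$. Suppose $\|x_t\|_{\mathcal A}\not\to0$. Then there are $\eta>0$ and a subsequence with $\|x_{t_j}\|_{\mathcal A}\in[\eta,R]$. On this compact interval the continuous, positive function $\alpha_3$ has a positive minimum $c$. This gives $\alpha_3(\|x_{t_j}\|_{\mathcal A})\ge c$ for all $j$, contradicting $\alpha_3(\|x_t\|_{\mathcal A})\to0$.

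Combining stability and attractivity yields asymptotic stability of $\mathcal A$ in $\bar{\mathcal X}$. The equilibrium property $f(a)=a$ on $\mathcal A$ is consistent with $V=0$ on $\mathcal A$, which follows from the upper bound $\alpha_2(0)=0$, and is otherwise not needed in the argument.
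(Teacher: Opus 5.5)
Your proof is correct, but it takes a different route from the paper. The paper gives no argument of its own and defers to the proof of Theorem 4.9 in Khalil, which is a continuous-time theorem for a single equilibrium of a nonautonomous system. In Khalil's proof:
\begin{itemize}
\item on a compact ball $B_r$ inside the domain, the positive definite $W_3$ is minorised by a class-$\mathcal K$ function;
\item one derives $\dot V\le-\alpha_3(\alpha_2^{-1}(V))$;
\item the comparison lemma then yields a class-$\mathcal{KL}$ estimate $\|x(t)\|\le\alpha_1^{-1}(\sigma(\alpha_2(\|x_0\|),t))$, i.e.\ uniform asymptotic stability;
\item attraction beyond the ball requires $D=\mathbb R^n$ and a radially unbounded $W_1$.
\end{itemize}
You instead work natively in discrete time with the set distance of Definition~\ref{def: Lyapunov function definition}. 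You use a sandwich $\varepsilon$--$\delta$ argument for stability, then summability of $\alpha_3(\|x_t\|_{\mathcal A})$ plus a compactness argument for attractivity. This avoids the comparison lemma. It also avoids the translation from continuous to discrete time, and from a point to a closed set, which the citation leaves to the reader. What you give up is the explicit $\mathcal{KL}$ convergence estimate, which the stated conclusion does not require. Your compactness step is the exact counterpart of Khalil's restriction to $B_r$.

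One caveat on that step. The bound $\|x_t\|_{\mathcal A}\le\alpha_1^{-1}(V(x_0))$ is only available when $V(x_0)$ lies in the range of $\alpha_1$. That holds for $x_0$ near $\mathcal A$, as in your stability step, or when $\alpha_1\in\mathcal K_\infty$. For arbitrary $x_0\in\bar{\mathcal X}$ and a bounded $\alpha_1$, it is the boundedness of $\bar{\mathcal X}$ that carries the argument. This is true in the paper's application, where $\bar{\mathcal X}\subseteq\mathcal X$ with $\mathcal X$ compact.

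Some such hypothesis is genuinely necessary if ``asymptotically stable in $\bar{\mathcal X}$'' means attraction from every point of $\bar{\mathcal X}$. Take:
\begin{itemize}
\item $\bar{\mathcal X}=\{0,1,2,\dots\}\subset\mathbb R$ and $\mathcal A=\{0\}$;
\item $f(0)=0$ and $f(k)=k+1$ for $k\ge 1$;
\item $V(0)=0$ and $V(k)=1+1/k$;
\item $\alpha_1(r)=r/(1+r)$, $\alpha_2(r)=2r$ and $\alpha_3(r)=r/(1+r)^3$.
\end{itemize}
The decrease condition holds because $1/(k(k+1))\ge k/(1+k)^3$, and every other condition of the definition is also satisfied. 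Yet $x_t\to\infty$ from $x_0=1$. Without boundedness, your argument still proves the local statement, namely attraction from $\|x_0\|_{\mathcal A}<\delta$. That is also all that Khalil's theorem delivers.
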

\noindent \textbf{Proof}. See the proof of Theorem 4.9 in \citep{Khalil:1173048}. $\square$ 

Consider the set of optimal steady state solutions in \eqref{eq: Steady state set}. The following strict dissipativity assumption with respect to $\mathcal{Z}^s$ is imposed \citep{zanon2016periodic}.
\begin{assumption} \label{Ass: Strict dissipativity}
    The system in \eqref{eq: Nonlinear Model dynamics} is strictly dissipative with respect to the supply rate $\ell(u)-\ell^s$, i.e.,  there exists a storage function $\lambda(x): \mathcal{X}\rightarrow \mathbb{R}$ and a positive definite $\rho(||(x,u)||_{\mathcal{Z}^s}):\mathcal{X}\times \mathcal{U} \rightarrow \mathbb{R}_{\geq 0}$  such that
\begin{equation} \label{eq: strict dissipativity def with respect to Xs}
    \lambda(x)-\lambda(f(x,u))+\ell(u)-\ell^s \geq \rho(||(x,u)||_{\mathcal{Z}^s}),
\end{equation}
for all $(x,u)\in\mathcal{X}\times \mathcal{U}$.
\end{assumption}

The rotated stage and terminal costs are defined as follows \citep{angeli2011average}:
\begin{subequations} \label{eq: rotated costs}
\begin{align}
&L(x,u)=\ell(u)-\ell^s+\lambda(x)-\lambda(f(x,u)),\\
&V^f_r(x)=\lambda(x). 
\end{align}
\end{subequations}
Using \eqref{eq: rotated costs}, consider the following auxiliary EMPC problem:
\begin{subequations} \label{eq: Rotated EMPC Formulation}
    \begin{align}
        & \min_{u_{0|t},\ldots,u_{N-1|t}} \sum_{k=0}^{N-1} L(x_{k|t},u_{k|t})+V^f_r(x_{N|t}), \label{eq: Rotated objective function in control problem}\\
        &\text{subject to:} \hspace{0.3 cm} k=0,1,\ldots,N-1 \nonumber\\
        & x_{0|t}=x_t, \label{eq: Rotated initialization in EMPC}\\
        & x_{k+1|t}=f(x_{k|t},u_{k|t}), \label{eq: Rotated model dynamics in EMPC}\\
        & x_{k|t} \in \mathcal{X},\label{eq: Rotated state constraints}\\
        &u_{k|t} \in \mathcal{U},\label{eq: Rotated input constraints}\\
        & x_{N|t} \in \mathcal{X}^s \label{eq: Rotated terminal constraint}
    \end{align}
\end{subequations}
The following lemma guarantees equivalence of solutions to EMPC problems in \eqref{eq: EMPC Formulation} and \eqref{eq: Rotated EMPC Formulation}.

\begin{lemma}
    Under Assumptions \ref{Ass: Strict convexity of l(u)} and \ref{Ass: Strict dissipativity}, the optimal solutions to the EMPC problems in \eqref{eq: EMPC Formulation} and \eqref{eq: Rotated EMPC Formulation} are identical. 
\end{lemma}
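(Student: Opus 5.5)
The plan is the standard telescoping argument for rotated costs: the two objectives differ by a quantity that is fixed once $x_t$ is fixed. Take any admissible pair $(x_t,\mathbf{u})\in\mathcal{Z}_N$ with its predicted states $x_{0|t},\ldots,x_{N|t}$. Both problems \eqref{eq: EMPC Formulation} and \eqref{eq: Rotated EMPC Formulation} have the same constraints \eqref{eq: initialization in EMPC}--\eqref{eq: Terminal constraint}, so they have the same feasible set. It is therefore enough to relate the two objectives on this common set.

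Substitute the definitions \eqref{eq: rotated costs} into \eqref{eq: Rotated objective function in control problem}, using $x_{k+1|t}=f(x_{k|t},u_{k|t})$:
\begin{equation*}
\sum_{k=0}^{N-1} L(x_{k|t},u_{k|t})+V^f_r(x_{N|t}) = \sum_{k=0}^{N-1}\ell(u_{k|t}) - N\ell^s + \sum_{k=0}^{N-1}\bigl(\lambda(x_{k|t})-\lambda(x_{k+1|t})\bigr) + \lambda(x_{N|t}).
\end{equation*}
The middle sum telescopes to $\lambda(x_{0|t})-\lambda(x_{N|t})$, and $-\lambda(x_{N|t})$ cancels against the terminal cost $V^f_r(x_{N|t})=\lambda(x_{N|t})$. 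The rotated objective is therefore $\sum_{k}\ell(u_{k|t}) - N\ell^s + \lambda(x_t)$. Since $x_{0|t}=x_t$, the term $-N\ell^s+\lambda(x_t)$ is a constant independent of the decision variables.

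The two objectives thus differ by a constant on the same feasible set, so they have the same minimisers; existence is inherited, and the sets of optimal solutions coincide. If uniqueness of the optimal input sequence is wanted, it would come from Assumption~\ref{Ass: Strict convexity of l(u)}, but only together with convexity of the feasible set, which needs extra structure such as linear $f$. I would therefore phrase the conclusion as equality of the optimal solution sets, which is what ``identical'' should mean here.

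There is no real obstacle. The one point to check is that $\lambda$ is only required to exist in Assumption~\ref{Ass: Strict dissipativity}, with no sign or continuity condition. The telescoping identity needs none, but it does need the terminal cost to be exactly $\lambda$ so that the cancellation at $x_{N|t}$ is exact. The strict dissipativity inequality itself is not used here; it becomes relevant only in the later stability analysis.
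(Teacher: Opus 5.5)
Your proposal is correct and follows essentially the same route as the paper: substitute the rotated costs, telescope the storage terms, cancel $-\lambda(x_{N|t})$ against $V^f_r(x_{N|t})$, and conclude that on the common feasible set the two objectives differ only by the constant $\lambda(x_t)-N\ell^s$. Your extra observations are accurate and left implicit in the paper: the feasible sets coincide, the dissipativity inequality is not used, and uniqueness of the minimiser would additionally need convexity of the feasible set.
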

\noindent\textbf{Proof}. From \eqref{eq: rotated costs}, the objective function in \eqref{eq: Rotated objective function in control problem} can be simplified as follows:
    \begin{equation}
    \begin{split}
        & \sum_{k=0}^{N-1} L(x_{k|t},u_{k|t})+V^f_r(x_{N|t})\\
        &=\sum_{k=0}^{N-1} \ell(u_{k|t})+\lambda(x_{0|t})-\lambda(x_{N|t})-N\ell^s+V^f_r(x_{N|t})\\
        &=\sum_{k=0}^{N-1} \ell(u_{k|t})+\lambda(x_{0|t})-N\ell^s, \nonumber
    \end{split}
    \end{equation}
    where $x_{0|t}$ is the given initial state at time $t$. It shows that the difference between objective functions in \eqref{eq: objective function in control problem} and \eqref{eq: Rotated objective function in control problem}, i.e., $\lambda(x_{0|t})-N\ell^s$ is a constant term. Thus, the optimisation problems give identical solutions.  $\square$

We are now ready to prove asymptotic convergence to the set of optimal steady states set using strict dissipativity in Assumption \ref{Ass: Strict dissipativity} and recursive feasibility in Proposition \ref{Prop: Recursive feasibility centralised}.

\begin{theorem} \label{thm: Asymptotic convergence}
    Let Assumptions \ref{ass: continuity of model}-\ref{Ass: Strict dissipativity} hold. Consider the system \eqref{eq: Nonlinear Model dynamics} under the EMPC scheme in \eqref{eq: Rotated EMPC Formulation}. The closed-loop solutions, i.e., $(x^0_{0|t},u^0_{0|t})$, converge to $\mathcal{Z}^s$ asymptotically. Consequently, $\ell(u_t)\rightarrow \ell^s$ as $t\rightarrow \infty$.
\end{theorem}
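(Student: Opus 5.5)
The plan is to use the optimal value of the rotated problem \eqref{eq: Rotated EMPC Formulation} as a monotonically nonincreasing, bounded-below sequence along the closed loop. From its decrease we extract summability of $\rho(\|(x_t,u_t)\|_{\mathcal{Z}^s})$, and then convert summability into convergence of the closed-loop pair to $\mathcal{Z}^s$.

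\textbf{Step 1: decrease of the rotated value.} Let $V_N(x_t)$ denote the optimal value of \eqref{eq: Rotated EMPC Formulation} at time $t$; by the preceding lemma its minimiser coincides with that of \eqref{eq: EMPC Formulation}. By Proposition \ref{Prop: Recursive feasibility centralised}, the shifted sequence \eqref{eq: Shifted solution} is feasible at time $t+1$. Its appended pair $(x^0_{N|t},u^s)$ lies in $\mathcal{Z}^s$, so
\[
L(x^0_{N|t},u^s)=\ell(u^s)-\ell^s+\lambda(x^0_{N|t})-\lambda(x^0_{N|t})=0 ,
\]
and the terminal term $\lambda(x^0_{N|t})$ is unchanged. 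Hence
\[
V_N(x_{t+1})-V_N(x_t)\le -L(x^0_{0|t},u^0_{0|t})\le -\rho(\|(x_t,u_t)\|_{\mathcal{Z}^s}),
\]
where the last inequality is Assumption \ref{Ass: Strict dissipativity}.

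\textbf{Step 2: boundedness of $V_N$.} Bounding $V_N$ is the delicate point, because $\lambda$ is not assumed continuous. Rather than bounding $\lambda$, I will use the identity from the proof of the preceding lemma:
\[
V_N(x_t)=J(\mathbf{u}_t^0)+\lambda(x_t)-N\ell^s .
\]
Since the sum telescopes, $\sum_{t=0}^{T}\big(V_N(x_{t+1})-V_N(x_t)\big)=V_N(x_{T+1})-V_N(x_0)$. This equals
\[
J(\mathbf{u}^0_{T+1})-J(\mathbf{u}^0_0)+\lambda(x_{T+1})-\lambda(x_0).
\]
The $J$-terms are bounded by continuity of $\ell$ and compactness of $\mathcal{U}$ (Assumptions \ref{ass: compact and convex constraint}--\ref{Ass: Strict convexity of l(u)}), exactly as in the average-performance proposition. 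For the $\lambda$-terms I need $\lambda$ bounded below on $\mathcal{X}$. I will argue this is implicit, since a storage function on a compact set is standardly taken bounded (or continuous, hence bounded), and state it as part of the reading of Assumption \ref{Ass: Strict dissipativity}. Under that reading, $\sum_{t=0}^{\infty}\rho(\|(x_t,u_t)\|_{\mathcal{Z}^s})<\infty$, so $\rho(\|(x_t,u_t)\|_{\mathcal{Z}^s})\to 0$.

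\textbf{Step 3: from $\rho\to0$ to distance $\to 0$.} This is the main obstacle, because $\rho$ is only positive definite, not class $\mathcal{K}$. I will argue by contradiction. Suppose $\|(x_t,u_t)\|_{\mathcal{Z}^s}\ge\epsilon$ along a subsequence. The closed-loop pairs live in the compact set $\mathcal{X}\times\mathcal{U}$, so the distances lie in a compact interval $[\epsilon,D]$. On this interval a continuous positive definite $\rho$ has a positive minimum, contradicting $\rho\to0$. Here I take $\rho$ continuous, as is implicit in "positive definite function" in Definition \ref{def: Lyapunov function definition}. Note also that $\mathcal{Z}^s$ is closed by continuity of $f$ and $\ell$, and hence compact, so the distance in Definition \ref{def: asymptotic convergence} is well defined. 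This gives $\|(x_t,u_t)\|_{\mathcal{Z}^s}\to0$.

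\textbf{Step 4: convergence of the cost.} Let $(\bar x_t,\bar u_t)\in\mathcal{Z}^s$ be nearest points to $(x_t,u_t)$. Then $\|u_t-\bar u_t\|\to0$ and $\ell(\bar u_t)=\ell^s$. Uniform continuity of $\ell$ on the compact set $\mathcal{U}$ then yields $\ell(u_t)\to\ell^s$.
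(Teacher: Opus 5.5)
Your proposal is correct and follows the same route as the paper. The paper also uses the optimal rotated value as a Lyapunov-like function, obtains the decrease $-\rho(\|(x_t,u_t)\|_{\mathcal{Z}^s})$ from the shifted candidate with $L(x^0_{N|t},u^s)=0$, and concludes that a monotone bounded value forces convergence to $\mathcal{Z}^s$. You are more careful than the paper: it simply asserts that $V^0$ is ``continuous and bounded'' and passes from $V^0(x_{t+1})-V^0(x_t)\to 0$ to $\|(x_t,u^0_{0|t})\|_{\mathcal{Z}^s}\to 0$ without comment, whereas you make explicit that this needs $\lambda$ bounded below and $\rho$ continuous (in fact a lower bound on $\lambda$ over $\mathcal{X}^s$ suffices, since $V^0(x_t)\ge\lambda(x^0_{N|t})$).
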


\noindent \textbf{Proof}. The optimal cost in \eqref{eq: Rotated objective function in control problem} is used as a Lyapunov-like function
    \begin{equation} \label{eq: Lyapunov-like function}
        V^0(x_t)= \sum_{k=0}^{N-1} L(x^0_{k|t},u^0_{k|t})+V^f_r(x^0_{N|t}), 
    \end{equation}
where the index `$0$' corresponds to the optimal solutions of \eqref{eq: Rotated EMPC Formulation} and $x_t$ is the initial state at time $t$. From Proposition \ref{Prop: Recursive feasibility centralised}, a candidate feasible solution at time $t+1$ is obtained by shifting the optimal state and input trajectories at time $t$. Denote the evaluated cost using the shifted sequence by $\hat{V}(\cdot)$. By optimality and from \eqref{eq: strict dissipativity def with respect to Xs}, we have
\begin{equation} \label{eq: descent property for Lyp-like in EMPC 1}
\begin{split}
    V^0(x_{t+1})- &V^0(x_t)\leq \hat{V}(x_{t+1})- V^0(x_t)\\
    &\leq L(x^0_{N|t},u^s(x^0_{N|t}))-L(x_t,u_{0|t}^0)\\
    &=-L(x_t,u_{0|t}^0)\leq -\rho(||(x_t,u_{0|t}^0)||_{\mathcal{Z}^s}).
\end{split}
\end{equation}
From \eqref{eq: descent property for Lyp-like in EMPC 1}, $V^0(x)$ is monotonically decreasing as long as $(x_t,u_{0|t}^0)$ is not in the set $\mathcal{Z}^s$. Moreover, $V^0(x_t)$ is continuous and bounded since $\mathcal{X}$ is compact. Hence, $V^0(x_t)$ converges to an equilibrium as $t \rightarrow \infty$ and $V^0(x_{t+1})- V^0(x_t) \rightarrow 0$. Consequently, $||(x_t,u_{0|t}^0)||_{\mathcal{Z}^s} \rightarrow 0$, i.e., $||(x_t,u_{0|t}^0)||\rightarrow \mathcal{Z}^s$ asymptotically. $\square$

\begin{remark}
    Theorem \ref{thm: Asymptotic convergence} does not guarantee that $\mathcal{X}^s$ is a set of asymptotically stable equilibrium points for the system in \eqref{eq: Nonlinear Model dynamics}; as there is no $\rho{(x)} \in \mathcal{K}$ such that $V^0(x) \leq \rho_1(||x||_{\mathcal{X}^s})$ as required in Definition \ref{def: Lyapunov function definition}. Thus, Lyapunov stability cannot be established using \eqref{eq: Lyapunov-like function} as a Lyapunov function. States starting sufficiently close to $\mathcal{X}^s$ are allowed to get away from it and asymptotically converge to $\mathcal{X}^s$. Nevertheless, this is not restrictive in the considered EMPC problem with a pure economic objective in \eqref{eq: EMPC Formulation}. From a practical point of view, keeping states within acceptable bounds and asymptotic convergence to the optimal cost in steady state is the main goal. From the recursive feasibility of the EMPC problem in Proposition \ref{Prop: Recursive feasibility centralised}, it is evident that closed-loop states are always bounded due to state constraints. Therefore, asymptotic convergence to $\ell^s$ is sufficient to achieve the economic goal using the control strategy in \eqref{eq: Rotated EMPC Formulation}. 
\end{remark}

Although the presented result is desirable in many applications, one might require asymptotic stability in addition to convergence to the optimal cost in steady state. We will address this challenge by modifying the stage cost in the next subsection. 

\subsection{Asymptotic stability by modifying stage cost} \label{sec: asymptotic stability of modified xs}

The main challenge in Section \ref{sec: Asymptotic convergence} was that the upper bounding class $\mathcal{K}$ function did not exist for the Lyapunov candidate function in \eqref{eq: Lyapunov-like function}. We will address this challenge by modifying the economic stage cost such that the difference between the optimal cost in steady state for the modified problem and the original problem in \eqref{eq: SS equations} is less than a small positive value denoted by $\gamma$. To this end, the modified steady state problem is formulated as
\begin{subequations} \label{eq: Modified SS equations}
    \begin{align}
        & \min_{x,u} \ell(u)+\varepsilon x^\top x, \label{eq: Modified SS Objective function}\\
        \text{subject to}& \nonumber \\
        & x=f(x,u), \label{eq: Modified model dynamics SS}\\
        & x \in \mathcal{X}, \hspace{0.5 cm} u \in \mathcal{U}, \label{eq: Modified state and input constraints SS}
    \end{align}
\end{subequations}
where $\varepsilon>0$. 
\begin{assumption} \label{ass: Uniqueness of steady state assumption}
    The solution to \eqref{eq: Modified SS equations} denoted by $(x_{\varepsilon}^s,u_{\varepsilon}^s)$ is unique.
\end{assumption}
\begin{lemma} \label{lem: bound for epsilon}
    Let Assumptions \ref{Ass: Strict convexity of l(u)} and \ref{ass: Uniqueness of steady state assumption} hold. For any $\gamma>0$, if $\varepsilon$ in \eqref{eq: Modified SS equations} is chosen as $\min\left\{\gamma, \hspace{0.1 cm}\frac{\gamma}{R}\right\}$, where $R=\displaystyle \max_{x \in \mathcal{X}} x^\top x$, then
    \begin{equation} \label{eq: modifed cost difference}
        \ell(u_\varepsilon^s)-\ell^s \leq \gamma.
    \end{equation} 
\end{lemma}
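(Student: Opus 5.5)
The argument is a direct comparison of optimal values. Since the modified problem \eqref{eq: Modified SS equations} and the original problem \eqref{eq: SS equations} have the same feasible set, any optimal steady state of one is a feasible candidate for the other.

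First I fix a pair $(x^s,u^s)\in\mathcal{Z}^s$; one exists by Assumption \ref{ass: existance of steady state solution}. This pair is feasible for \eqref{eq: Modified SS equations}. Optimality of $(x^s_\varepsilon,u^s_\varepsilon)$, which exists and is unique by Assumption \ref{ass: Uniqueness of steady state assumption}, then gives
\begin{equation*}
\ell(u^s_\varepsilon)+\varepsilon (x^s_\varepsilon)^\top x^s_\varepsilon \leq \ell(u^s)+\varepsilon (x^s)^\top x^s = \ell^s+\varepsilon (x^s)^\top x^s .
\end{equation*}
Since $\varepsilon>0$, the term $\varepsilon (x^s_\varepsilon)^\top x^s_\varepsilon$ is nonnegative and can be dropped from the left-hand side. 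This yields
\begin{equation*}
\ell(u^s_\varepsilon)-\ell^s\leq \varepsilon (x^s)^\top x^s .
\end{equation*}
Because $x^s\in\mathcal{X}$, we have $(x^s)^\top x^s\leq R$, where $R$ is finite because $\mathcal{X}$ is compact (Assumption \ref{ass: compact and convex constraint}). Hence $\ell(u^s_\varepsilon)-\ell^s\leq \varepsilon R$.

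The remaining step is to show $\varepsilon R\leq\gamma$ for $\varepsilon=\min\{\gamma,\gamma/R\}$. I split into cases.
\begin{itemize}
\item If $R>0$, then $\varepsilon\leq\gamma/R$, so $\varepsilon R\leq\gamma$.
\item If $R=0$, then $\mathcal{X}=\{0\}$ and $\varepsilon R=0\leq\gamma$. Here $\gamma/R$ should be read as $+\infty$, so $\varepsilon=\gamma$, and the bound holds trivially.
\end{itemize}
The $\gamma$ entry in the minimum therefore plays no role in the bound itself. It only keeps $\varepsilon$ well defined and small when $R$ is small.

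I expect no real obstacle. The one point needing care is to compare against a \emph{fixed} original optimum $(x^s,u^s)$ rather than against $(x^s_\varepsilon,u^s_\varepsilon)$. Apart from that, the proof only uses feasibility of $(x^s,u^s)$ in the modified problem and nonnegativity of the penalty term. Strict convexity (Assumption \ref{Ass: Strict convexity of l(u)}) is not needed for the inequality; it only enters through the well-definedness of $\ell^s$ and the uniqueness assumption.
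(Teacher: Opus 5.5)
Your proposal is correct and follows the paper's proof: both use the same feasible set, so a fixed original optimum $(x^s,u^s)$ is admissible in \eqref{eq: Modified SS equations}, then drop the nonnegative term $\varepsilon (x^s_\varepsilon)^\top x^s_\varepsilon$ and bound $(x^s)^\top x^s\leq R$. The only difference is the final case split: the paper uses $R\leq 1$ (where $\varepsilon=\gamma$) versus $R>1$ (where $\varepsilon=\gamma/R$), while you use $R>0$ versus $R=0$, which also covers the degenerate case $\mathcal{X}=\{0\}$ that the paper leaves implicit.
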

\noindent \textbf{Proof}. From optimality and the fact that feasibility region of the \eqref{eq: SS equations} and \eqref{eq: Modified SS equations} are the same, it holds that
\begin{equation} \label{eq: cost difference l(u_s) and modified one}
\begin{split}
    &  \ell(u^s_\varepsilon) \leq \ell(u^s_\varepsilon)+\varepsilon (x^s_\varepsilon)^\top x^s_\varepsilon \leq \ell^s+\varepsilon (x^s)^\top x^s,\\
    \text{which gives}&\\
    & 0\leq \ell(u^s_\varepsilon)-\ell^s \leq \varepsilon(x^s)^\top x^s \leq \varepsilon \max_{x \in \mathcal{X}} x^\top x. \nonumber
\end{split}
\end{equation}
The first inequality in the first line holds since $\varepsilon (x^s_\varepsilon)^\top x^s_\varepsilon \geq 0$. The second inequality in the first line is from optimality. If $R=\displaystyle \max_{x \in \mathcal{X}} x^\top x\leq 1$, then choosing $\varepsilon=\gamma$ satisfies \eqref{eq: modifed cost difference}. Otherwise, $\varepsilon=\frac{\gamma}{R}$ guarantees \eqref{eq: modifed cost difference}.  $\square$

The EMPC problem with the modified cost function is
\begin{subequations} \label{eq: Modified EMPC Formulation}
    \begin{align}
        \min_{u_{0|t},\ldots,u_{N-1|t}} &\sum_{k=0}^{N-1} \ell(u_{k|t})+\varepsilon x_{k|t}^\top x_{k|t}, \label{eq: Modified objective function in control problem}\\
        \text{subject to} \hspace{0.3 cm} &k=0,1,\ldots,N-1,\\
        & x_{0|t}=x, \label{eq: Modified initialization in EMPC}\\
        & x_{k+1|t}=f(x_{k|t},u_{k|t}), \label{eq: Modified Model dynamics in EMPC}\\
        & x_{k|t} \in \mathcal{X},\label{eq: Modified State constraints}\\
        &u_{k|t} \in \mathcal{U},\label{eq: Modified Input constraints}\\
        & x_{N|t}=x^s_\varepsilon, \label{eq: Modified Terminal constraint}
    \end{align}
\end{subequations}
Similar to \eqref{eq: admissible initial states}, denote by $\mathcal{X}_N^\varepsilon$ the admissible set of initial states $x$ such that the EMPC scheme in \eqref{eq: Modified EMPC Formulation} is feasible at time $t=0$, i.e.,
\begin{equation} \label{eq: Admissble pair modified}
\begin{split}
&\mathcal{Z}_{N}^\varepsilon=\big\{(x,\mathbf{u}) \in \mathcal{X}\times \mathcal{U}^N \hspace{0.1 cm}|\hspace{0.1 cm} \exists \hspace{0.1 cm} x_{k}\in\mathcal{X},\hspace{0.1 cm} k=1,\ldots,N-1\\ &\text{such that}\hspace{ 0.1 cm}x_{0}=x, \hspace{0.1 cm} x_{k+1}=f(x_{k},u_{k}),
\hspace{0.1 cm} x_{N}=x_\varepsilon^s\big\}, 
\end{split}
\end{equation}

\begin{equation} \label{eq: admissible initial states modified}
    \mathcal{X}_{N}^\varepsilon=\{x \in \mathcal{X} \hspace{0.1 cm}| \hspace{0.1 cm} \exists \mathbf{u} \hspace{0.1 cm} \text{such that} \hspace{0.1 cm} (x,\mathbf{u})\in\mathcal{Z}_{N}^\varepsilon\}.
\end{equation}

The following lemma guarantees that the difference between the optimal value of the cost in \eqref{eq: Modified objective function in control problem} and \eqref{eq: objective function in control problem} is less than $\gamma$.     

\begin{lemma} \label{lem: cost difference with modified gamma}
    Let Assumption \ref{Ass: Strict convexity of l(u)} hold. Denote the open-loop costs from the original EMPC problem in \eqref{eq: EMPC Formulation} and the modified EMPC in \eqref{eq: Modified EMPC Formulation} at time $t$ by $J^*_t$ and $J^*_{t,\varepsilon}$, respectively. If $\varepsilon$ in \eqref{eq: Modified objective function in control problem} is chosen as $\min\{\gamma,\frac{\gamma}{R}\}$ with $R=\max_{x \in \mathcal{X}} x^\top x$, then 
    \[0\leq J^*_{t,\varepsilon}-J^*_t \leq N \cdot\min\{\gamma,\frac{\gamma}{R}\}.\] 
\end{lemma}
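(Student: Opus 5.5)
The plan is to compare the two optimal values through a common feasible input sequence, and to bound the extra term $\varepsilon x^\top x$ stage by stage. Throughout, $\varepsilon=\min\{\gamma,\gamma/R\}$ as in Lemma \ref{lem: bound for epsilon}. One step depends on how the two terminal constraints relate, and I expect that step to be the main obstacle.

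\textbf{Lower bound $J^*_{t,\varepsilon}-J^*_t\ge 0$.} Let $(\mathbf{x}^\varepsilon,\mathbf{u}^\varepsilon)$ be an optimal solution of \eqref{eq: Modified EMPC Formulation}. Since $\varepsilon x_{k|t}^\top x_{k|t}\ge 0$, we have $J^*_{t,\varepsilon}\ge\sum_{k=0}^{N-1}\ell(u^\varepsilon_{k|t})$. It then suffices that $\mathbf{u}^\varepsilon$ is admissible for \eqref{eq: EMPC Formulation}, which gives $\sum_k\ell(u^\varepsilon_{k|t})\ge J^*_t$ by optimality. The dynamics and the state and input constraints are shared by the two problems, so the only point to verify is the terminal constraint, i.e.\ that $x^s_\varepsilon\in\mathcal{X}^s$. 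This is where Assumptions \ref{Ass: Strict convexity of l(u)} and \ref{ass: Uniqueness of steady state assumption} must be used, and Lemma \ref{lem: bound for epsilon} only gives $\ell(u^s_\varepsilon)\le\ell^s+\gamma$, not equality. I would therefore argue this step under the reading intended by the construction: the modification is meant to select a specific steady state, so $\mathcal{Z}_N^\varepsilon\subseteq\mathcal{Z}_N$. This holds exactly when $x^s_\varepsilon\in\mathcal{X}^s$. If it fails, I would state the inclusion as an explicit requirement on $\varepsilon$ (e.g.\ $\varepsilon$ small enough that $u^s_\varepsilon=u^s$). The other option is to compare against the original problem with terminal equality $x_{N|t}=x^s_\varepsilon$; by the second proposition of Section \ref{sec: performance and stability analysis}, that problem can only be worse than \eqref{eq: EMPC Formulation}, so the lower bound would follow by transitivity once feasibility is aligned.

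\textbf{Upper bound.} Let $\mathbf{u}^0_t$ be optimal for the original problem, with the terminal condition aligned as above so that $\mathbf{u}^0_t$ is feasible for \eqref{eq: Modified EMPC Formulation}. By optimality of $J^*_{t,\varepsilon}$,
\[
J^*_{t,\varepsilon}\le\sum_{k=0}^{N-1}\ell(u^0_{k|t})+\varepsilon\sum_{k=0}^{N-1}(x^0_{k|t})^\top x^0_{k|t}=J^*_t+\varepsilon\sum_{k=0}^{N-1}(x^0_{k|t})^\top x^0_{k|t}.
\]
Each predicted state lies in $\mathcal{X}$, so each term is at most $\varepsilon R$. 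This gives $J^*_{t,\varepsilon}-J^*_t\le N\varepsilon R$.

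\textbf{Reaching the stated constant.} By the same case split as in Lemma \ref{lem: bound for epsilon}, $\varepsilon R\le\gamma$. If $R\le 1$, then $\varepsilon=\gamma$ and $N\varepsilon R\le N\gamma=N\varepsilon$, which is exactly the claimed bound. If $R>1$, then $\varepsilon=\gamma/R$ and the argument gives $N\gamma$. To get the literal constant $N\min\{\gamma,\gamma/R\}$ in this case one would need a sharper per-stage estimate of $(x^0_{k|t})^\top x^0_{k|t}$, which is not available in general, so in the write-up I would present the $R>1$ bound as $N\gamma$ and flag the discrepancy. The real work therefore lies in the two bookkeeping issues identified above: the terminal-set inclusion needed for the lower bound, and the per-stage constant.
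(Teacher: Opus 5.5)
Your route is the paper's: use one problem's optimizer as a candidate for the other, and bound $\varepsilon x^\top x\le\varepsilon R$ stage by stage.

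The genuine gap is in your upper bound. You take the original optimizer $\mathbf{u}^0_t$ ``with the terminal condition aligned as above so that $\mathbf{u}^0_t$ is feasible for \eqref{eq: Modified EMPC Formulation}''. But the alignment above, $x^s_\varepsilon\in\mathcal{X}^s$, only gives $\mathcal{Z}_N^\varepsilon\subseteq\mathcal{Z}_N$. That is the direction the lower bound uses. The upper bound needs the reverse: $\mathbf{u}^0_t$ must steer $x_t$ exactly to $x^s_\varepsilon$, whereas \eqref{eq: EMPC Formulation} lets it stop anywhere in $\mathcal{X}^s$. In general only $\mathcal{X}^s=\{x^s_\varepsilon\}$ guarantees this, which excludes precisely the non-singleton case the paper targets.

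No per-stage estimate can repair this. In Example~\ref{ex: System with integrators} one has $R=1$, $\varepsilon=\gamma$ and $x^s_\varepsilon=0\in\mathcal{X}^s$. At $x_t=1$ the original problem gives $J^*_t=0$ (all $u_k=0$), while the modified one must satisfy $\sum_k u_k=-1$. Hence $J^*_{t,\varepsilon}-J^*_t\ge 1/N+\gamma$, which exceeds $N\gamma$ whenever $\gamma<1/N^2$. The paper's proof makes the same substitution without comment: it inserts $u^0_{k|t}$, $x^0_{k|t}$ into \eqref{eq: Modified objective function in control problem} as if they were feasible. So it has the same hole.

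What your bookkeeping does prove, with no extra hypothesis, is $0\le J^*_{t,\varepsilon}-J^{=}_t\le N\gamma$. Here $J^{=}_t$ is the value of \eqref{eq: EMPC Formulation} with $x_{N|t}\in\mathcal{X}^s$ replaced by $x_{N|t}=x^s_\varepsilon$, so that the two feasible sets coincide. When $x^s_\varepsilon\in\mathcal{X}^s$ you also have $J^{=}_t\ge J^*_t$, but the gap $J^{=}_t-J^*_t$ is not controlled by $\gamma$.

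Your two flags are correct, and the paper's proof addresses neither.

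First, the lower bound is never argued in the paper, and it does need $x^s_\varepsilon\in\mathcal{X}^s$. Your alternative through the second proposition needs the same inclusion, so it is not a separate route. Moreover, this inclusion is a structural property, not something a small $\varepsilon$ buys. Take $x^+=\tfrac12x+u$, $\ell(u)=(u-1)^2$, $\mathcal{X}=[-3,3]$, $\mathcal{U}=[-5,5]$. Then $u^s_\varepsilon=1/(1+4\varepsilon)\ne u^s=1$ for every $\varepsilon>0$. At $x_t=-1$, $N=1$, one gets $J^*_{t,\varepsilon}=2.25-23\varepsilon+O(\varepsilon^2)<J^*_t=2.25$. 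The inclusion does hold when the steady-state constraint does not involve $x$, as for pure integrators, because the modified steady-state problem then decouples.

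Second, on the constant: the paper's last step $\varepsilon(x^0_{k|t})^\top x^0_{k|t}\le\min\{\gamma,\gamma/R\}$ tacitly uses $x^\top x\le 1$. For $R>1$ only $\varepsilon R=\gamma$ is available, and it is attained. Take the same dynamics with $\ell=u^2$, $\mathcal{X}=[-2,2]$, $\mathcal{U}=[-1,1]$, $x_t=2$, $N=1$. Here $\mathcal{X}^s=\{0\}=\{x^s_\varepsilon\}$, so feasibility is not an issue, and the difference is exactly $4\varepsilon=\gamma$. So $N\gamma$ is the correct constant.
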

\noindent \textbf{Proof}. Denote the solutions of \eqref{eq: EMPC Formulation} and \eqref{eq: Modified EMPC Formulation} by $[u^0_{0|t}, \hspace{0.1 cm} \ldots, \hspace{0.1 cm} u^0_{N-1|t}]^\top$ and $[u^0_{0|t,\varepsilon}, \hspace{0.1 cm} \ldots, \hspace{0.1 cm} u^0_{N-1|t,\varepsilon}]^\top$, respectively. From optimality, we have
\begin{equation}
\begin{split}
    \sum_{k=0}^{N-1} \ell(u^0_{k|t,\varepsilon})&\leq \sum_{k=0}^{N-1} \left(\ell(u^0_{k|t,\varepsilon})+\varepsilon (x^0_{k|t,\varepsilon})^\top x^0_{k|t,\varepsilon}\right)\\
    & \leq \sum_{k=0}^{N-1} \left(\ell(u^0_{k|t})+\varepsilon (x^0_{k|t})^\top x^0_{k|t}\right). \nonumber
\end{split}
\end{equation}
Therefore, we have 
\begin{equation}
\begin{split}
     \sum_{k=0}^{N-1} &\left(\ell(u^0_{k|t,\varepsilon})-\ell(u^0_{k|t})\right) \leq  \sum_{k=0}^{N-1} \varepsilon (x^0_{k|t})^\top x^0_{k|t}\\
    & \leq \sum_{k=0}^{N-1} \min\{\gamma,\frac{\gamma}{R}\}=N \cdot \min\{\gamma,\frac{\gamma}{R}\}. \hspace{0.5 cm} \square \nonumber
\end{split}
\end{equation}

\begin{remark}
    From Lemma \ref{lem: cost difference with modified gamma}, we can choose  $\gamma$ such that the open-loop costs are arbitrarily close. 
\end{remark}

Next, we prove that under a strict dissipativity assumption, $x^s_\varepsilon$ is an asymptotically stable equilibrium.
\begin{assumption} \label{Ass: Modified strict dissipativity}
    There exists a function $\Bar{\lambda}(x): \mathcal{X}\rightarrow \mathbb{R}$ and a positive definite $\Bar{\rho}(||x||_{x^s_{\varepsilon}}):\mathcal{X}\rightarrow \mathbb{R}_{\geq 0}$ such that
\begin{subequations} \label{eq: modified strict dissipativity def with respect xs}
\begin{align}
    & \hspace{-1.45 cm}\bar{L}(x,u)\geq \Bar{\rho}(||x||_{x^s_\varepsilon}), \label{eq: Lower bound for modified rotated stage cost}\\
    &\hspace{-2.5  cm}\text{where} \nonumber\\
    \Bar{L}(x,u)=&\bar{\lambda}(x)-\bar{\lambda}(f(x,u))+\ell(u)-\ell(u_\varepsilon^s) \nonumber\\
    &+ \varepsilon(x^\top x-(x^s_\varepsilon)^\top x^s_\varepsilon), \label{eq: modified rotated stage cost}
\end{align}
\end{subequations}
for all $(x,u) \in \mathcal{X}\times \mathcal{U}$.
\end{assumption}
It can be verified that replacing the stage cost in \eqref{eq: Modified objective function in control problem} by the rotated cost in \eqref{eq: modified rotated stage cost} will not change the solution of \eqref{eq: Modified EMPC Formulation}. Hence, a Lyapunov candidate function is defined based on the optimal rotated cost
\begin{equation} \label{eq: Modified Lyapunov function}
        \bar{V}^0(x_t)= \sum_{k=0}^{N-1} \bar{L}(x^0_{k|t,\varepsilon},u^0_{k|t,\varepsilon}). 
\end{equation}

We make use of the following lemma to find an upper bounding class-$\mathcal{K}$ function for the Lyapunov candidate function.

\begin{lemma}[\citep{amrit2011economic}] \label{lem: upper bound for Lyap}
Let a function $V(x)$ be defined on a closed set $\bar{\mathcal{X}}$. If $V(\cdot)$ is continuous at the origin and satisfies $V(0) = 0$, then there exists a class $\mathcal{K}$ function $\bar{\alpha}(\cdot)$ such that
\[
    V(x) \leq \bar{\alpha}(\lvert \lvert x \rvert \rvert), \quad \forall x \in \bar{\mathcal{X}} .
\]
\end{lemma}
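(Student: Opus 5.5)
The plan is to majorize $V$ by its radial supremum and then regularize that supremum into a class $\mathcal{K}$ function. Define
\[
\sigma(s)=\sup\{V(x)\,:\,x\in\bar{\mathcal{X}},\ \|x\|\le s\},\qquad s\ge 0 .
\]
To make $\sigma$ finite I will use the setting in which the lemma is applied. There $\bar{\mathcal{X}}\subseteq\mathcal{X}$ is bounded, hence compact by Assumption \ref{ass: compact and convex constraint}, and $V$ is bounded on it, as is the case for $\bar V^0$, which is built from continuous data on compact sets. If $\bar{\mathcal{X}}$ were unbounded, I would instead assume that $V$ is bounded on bounded sets, and the argument below is unchanged. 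By construction $V(x)\le\sigma(\|x\|)$ for every $x\in\bar{\mathcal{X}}$, and $\sigma$ is nondecreasing.

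Next I check the behaviour of $\sigma$ at $0$. First, $\sigma(0)=V(0)=0$. Continuity of $V$ at the origin gives, for each $\eta>0$, a $\delta>0$ with $|V(x)|<\eta$ whenever $\|x\|\le\delta$. Hence $\sigma(s)\le\eta$ for $s\le\delta$, so $\sigma(s)\to 0$ as $s\downarrow 0$.

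However, $\sigma$ need be neither continuous nor strictly increasing, so I regularize it by averaging and adding a linear term:
\[
\bar\alpha(s)=\int_1^2\sigma(s\tau)\,d\tau+s=\frac1s\int_s^{2s}\sigma(r)\,dr+s,\qquad \bar\alpha(0)=0 .
\]
The required properties follow in three steps.
\begin{itemize}
\item Since $\sigma$ is nondecreasing, $\int_1^2\sigma(s\tau)\,d\tau\ge\sigma(s)$. Hence $V(x)\le\sigma(\|x\|)\le\bar\alpha(\|x\|)$.
\item The integral term is nondecreasing in $s$, so adding $s$ makes $\bar\alpha$ strictly increasing.
\item At $s=0$, the bound $\int_1^2\sigma(s\tau)\,d\tau\le\sigma(2s)\to0$ gives continuity.
\end{itemize}

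The main obstacle is continuity of $\bar\alpha$ for $s>0$, because $\sigma$ may jump. A monotone function has at most countably many discontinuities. So for fixed $s_0>0$ the map $s\mapsto\sigma(s\tau)$ is continuous at $s_0$ for almost every $\tau\in[1,2]$. The integrands are uniformly bounded by $\sup\sigma<\infty$ on bounded $s$-ranges, so dominated convergence yields continuity of $s\mapsto\int_1^2\sigma(s\tau)\,d\tau$ at $s_0$. Therefore $\bar\alpha$ is continuous, vanishes at zero, is strictly increasing, and dominates $V$, so it is the required class $\mathcal{K}$ function.

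In the application one replaces the origin by $x^s_\varepsilon$ and $\|x\|$ by $\|x\|_{x^s_\varepsilon}$. This is the same argument after the shift $x\mapsto x-x^s_\varepsilon$.
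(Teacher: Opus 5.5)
The paper gives no proof of this lemma; it is quoted from Amrit et al.\ (2011). Your argument is correct and is the standard construction for results of this kind. You majorize $V$ by the radial envelope $\sigma(s)=\sup\{V(x): x\in\bar{\mathcal X},\ \|x\|\le s\}$, which is nondecreasing with $\sigma(0)=0$ and $\sigma(s)\to 0$ as $s\downarrow 0$. You then dominate $\sigma$ by the continuous, strictly increasing function $\frac1s\int_s^{2s}\sigma(r)\,dr+s$. Your dominated-convergence argument for continuity at $s_0>0$ is fine. A shorter route is that $F(s)=\int_0^s\sigma(r)\,dr$ is locally Lipschitz because $\sigma$ is locally bounded, so $\bigl(F(2s)-F(s)\bigr)/s+s$ is continuous for $s>0$.

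You should state the boundedness hypothesis more sharply. It is not a convenience borrowed from the application: the lemma as printed is false without it. A class-$\mathcal K$ function is finite and increasing on its domain, so the bound $V(x)\le\bar\alpha(\|x\|)$ forces $V$ to be bounded above on every bounded subset of $\bar{\mathcal X}$. For example, on $\bar{\mathcal X}=[0,1]$ let $V=0$ on $[0,\tfrac14]\cup\{\tfrac12\}$ and $V(x)=|x-\tfrac12|^{-1}$ otherwise. Then $V$ is continuous at $0$ with $V(0)=0$. Yet any admissible $\bar\alpha$ must have $1$ in its domain, which would give $V\le\bar\alpha(1)<\infty$ on $[0,1]$. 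What you have proved is the correct form of the lemma, with the added hypothesis that $V$ is bounded above on bounded subsets of $\bar{\mathcal X}$, and that hypothesis is necessary.

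Relatedly, your aside that $\bar V^0$ is bounded because it is built from continuous data on compact sets is not supported by the paper's assumptions. Telescoping the rotated cost with $x_{N|t}=x^s_\varepsilon$ shows that $\bar V^0(x)$ equals the (bounded) optimal value of the modified EMPC problem plus $\bar\lambda(x)$ minus constants. Assumption~\ref{Ass: Modified strict dissipativity} places no continuity or boundedness requirement on $\bar\lambda$. The hypothesis must therefore be supplied separately when the lemma is invoked in Theorem~\ref{thm: AS for the modified}. It holds automatically in the linear periodic sections, where the storage function is linear. None of this affects the validity of your proof of the corrected lemma.
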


\begin{theorem} \label{thm: AS for the modified}
   Given an $\varepsilon>0$, suppose that Assumption \ref{Ass: Modified strict dissipativity} holds. Then, for any initial condition $x_0 \in \mathcal{X}_N^\varepsilon$ in \eqref{eq: admissible initial states modified}, $x_\varepsilon^s$ is an asymptotically stable equilibrium of the closed-loop system \eqref{eq: Nonlinear Model dynamics} under the EMPC law \eqref{eq: Modified EMPC Formulation}.
\end{theorem}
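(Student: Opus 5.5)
We will verify that the optimal rotated cost $\bar V^0$ in \eqref{eq: Modified Lyapunov function} is a Lyapunov function on $\bar{\mathcal{X}}=\mathcal{X}_N^\varepsilon$ with respect to $\mathcal{A}=\{x^s_\varepsilon\}$ in the sense of Definition \ref{def: Lyapunov function definition}. We will then invoke Lemma \ref{lem: AS theorem from Khalil}.

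\emph{Preliminaries.} We first check that $x^s_\varepsilon$ is an equilibrium of the closed loop. From $x^s_\varepsilon$, the sequence that applies $u^s_\varepsilon$ at every step is feasible for \eqref{eq: Modified EMPC Formulation}, and each of its rotated stage costs is $\bar L(x^s_\varepsilon,u^s_\varepsilon)=0$. By \eqref{eq: Lower bound for modified rotated stage cost} every $\bar L\geq 0$, so $\bar V^0(x^s_\varepsilon)=0$ and this sequence is optimal. The zero-cost optimal solution then applies $u^s_\varepsilon$ and keeps the state at $x^s_\varepsilon$; uniqueness in Assumption \ref{ass: Uniqueness of steady state assumption} is used to argue that this is the control actually applied. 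Next, positive invariance of $\mathcal{X}_N^\varepsilon$ follows as in Proposition \ref{Prop: Recursive feasibility centralised}: shift the optimal sequence and append $u^s_\varepsilon$, which keeps the terminal state at $x^s_\varepsilon$. Finally, as in the earlier lemma on rotated costs, the rotated objective differs from \eqref{eq: Modified objective function in control problem} only by $\bar\lambda(x_t)-\bar\lambda(x^s_\varepsilon)-N(\ell(u^s_\varepsilon)+\varepsilon (x^s_\varepsilon)^\top x^s_\varepsilon)$. This term is constant given $x_t$, because the terminal state is fixed. Hence the optimisers coincide.

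\emph{Lower bound and decrease.} Since each term satisfies $\bar L\geq \bar\rho(\|x\|_{x^s_\varepsilon})\ge 0$, we get $\bar V^0(x)\geq \bar L(x,u^0_{0|t,\varepsilon})\geq \bar\rho(\|x-x^s_\varepsilon\|)$. If $\bar\rho$ is only positive definite, we take $\alpha_1$ to be a class-$\mathcal K$ minorant of it on the compact set $\mathcal{X}$; this is possible by continuity and compactness, or we simply assume $\bar\rho\in\mathcal K$. For the decrease, we evaluate the shifted candidate at $t+1$. It has cost $\bar V^0(x_t)-\bar L(x_t,u^0_{0|t,\varepsilon})+\bar L(x^s_\varepsilon,u^s_\varepsilon)$, and the last term is $0$. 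By optimality,
\[
\bar V^0(x_{t+1})-\bar V^0(x_t)\leq -\bar L(x_t,u^0_{0|t,\varepsilon})\leq -\bar\rho(\|x_t\|_{x^s_\varepsilon}),
\]
so we can take $\alpha_3=\bar\rho$.

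\emph{Upper bound (main obstacle).} The difficulty is the bound $\bar V^0(x)\le\alpha_2(\|x-x^s_\varepsilon\|)$. This is exactly what failed in Theorem \ref{thm: Asymptotic convergence}, and it now becomes possible because the target is a single point. We plan to apply Lemma \ref{lem: upper bound for Lyap} to $V(z)=\bar V^0(z+x^s_\varepsilon)$ on the closed set $\mathcal{X}_N^\varepsilon-x^s_\varepsilon$. This requires two facts: $\bar V^0(x^s_\varepsilon)=0$, which was shown above, and continuity of $\bar V^0$ at $x^s_\varepsilon$. For continuity, we first try a standard parametric-optimisation argument. By Assumptions \ref{ass: continuity of model}--\ref{Ass: Strict convexity of l(u)} and compactness, the cost is continuous and the feasible set is compact-valued. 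Upper semicontinuity of $\bar V^0$ at $x^s_\varepsilon$ then needs feasible sequences from nearby $x$ whose cost tends to $0$. We expect to obtain these from a local controllability assumption near $x^s_\varepsilon$, i.e.\ $x^s_\varepsilon\in\operatorname{int}\mathcal{X}_N^\varepsilon$ with inputs approaching $u^s_\varepsilon$; continuity of $\bar\lambda$ may also need to be assumed. If this is unavailable, we would instead restrict attention to the case where $\bar V^0$ is continuous on $\mathcal{X}_N^\varepsilon$. With all three bounds established, Lemma \ref{lem: AS theorem from Khalil} gives asymptotic stability of $x^s_\varepsilon$ on $\mathcal{X}_N^\varepsilon$.
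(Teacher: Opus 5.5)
Your proposal follows essentially the same route as the paper: the lower bound comes from the first rotated stage cost, the upper bound from Lemma~\ref{lem: upper bound for Lyap} with the origin shifted to $x^s_\varepsilon$, the decrease from the shifted sequence with $(x^s_\varepsilon,u^s_\varepsilon)$ appended (whose extra rotated stage cost is zero), and the conclusion from Lemma~\ref{lem: AS theorem from Khalil}. The continuity of $\bar V^0$ at $x^s_\varepsilon$ that you single out is a genuine requirement, which the paper's proof uses without verifying when it invokes Lemma~\ref{lem: upper bound for Lyap}; your proposed remedy (continuity of $\bar\lambda$ plus a local, weak-controllability-type condition near $x^s_\varepsilon$, as is customary for terminal-equality-constraint EMPC) makes explicit what the paper leaves implicit, so your conditional version is more careful rather than deficient.
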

\noindent \textbf{Proof.}   
Since $\bar{L}(x,u) \ge 0$ from \eqref{eq: Lower bound for modified rotated stage cost}, each term in the sum in \eqref{eq: Modified Lyapunov function} is nonnegative. Therefore, the sum is lower bounded by any of its terms, in particular the first one. $x^0_{0|t,\varepsilon}=x_t$ is the given initial state and $u^0_{0|t,\varepsilon}=u_{0|t}$ is the solution applied to the system. Therefore, the lower bound for the Lyapunov function can be obtained from \eqref{eq: Lower bound for modified rotated stage cost} as 
\[
 \bar{V}^0(x_t)= \sum_{k=0}^{N-1} \bar{L}(x^0_{k|t,\varepsilon},u^0_{k|t,\varepsilon}) \geq \bar{L}(x_t,u_{0|t})\geq\Bar{\rho}(||x||_{x^s_\varepsilon}).
\]
The upper bound of $\bar{V}^0(\cdot)$ can also be found using Lemma \ref{lem: upper bound for Lyap}, where the origin is shifted to $x^s_\varepsilon$. Hence,
\[
\bar{\rho}(||x_t||_{x^s_\varepsilon}) \leq \bar{V}^0(x_t) \leq \bar{\alpha}(||x_t||_{x^s_\varepsilon}), \hspace{0.2 cm} \forall x_t \in \mathcal{X}^\varepsilon_N.
\]
Consider the shifted sequences in \eqref{eq: Shifted solution} with the terminal state and input replaced by $x^s_\varepsilon$ and $u^s_\varepsilon$, respectively. Denote the cost associated with the shifted sequences by $\hat{V}_\varepsilon(\cdot)$. Under Assumption \ref{Ass: Modified strict dissipativity}, the descent property of the Lyapunov function can be established along the same lines as in \eqref{eq: descent property for Lyp-like in EMPC 1}.
\begin{equation} \label{eq: descent property for Lyp-like in EMPC}
\begin{split}
    &\bar{V}^0(x_{t+1})- \bar{V}^0(x_t)\leq \hat{V}_\varepsilon(x_{t+1})- \bar{V}^0(x_t)\\
    & =-L(x_t,u_{0|t}^0)\leq -\bar{\rho}(||x_t||_{x^s_\varepsilon}). \nonumber
\end{split}
\end{equation}
Therefore, from Definition \ref{def: Lyapunov function definition}, $\bar{V}^0(x)$ satisfies Lyapunov conditions and $x^s_\varepsilon$ is an asymptotically stable equilibrium point from Lemma \ref{lem: AS theorem from Khalil}. $\square$

A summary of EMPC schemes and the obtained properties is given by Fig. \ref{fig: Summary of EMPC schemes}. 
\begin{figure}[h]
    \centering
    \includegraphics[scale=0.7]{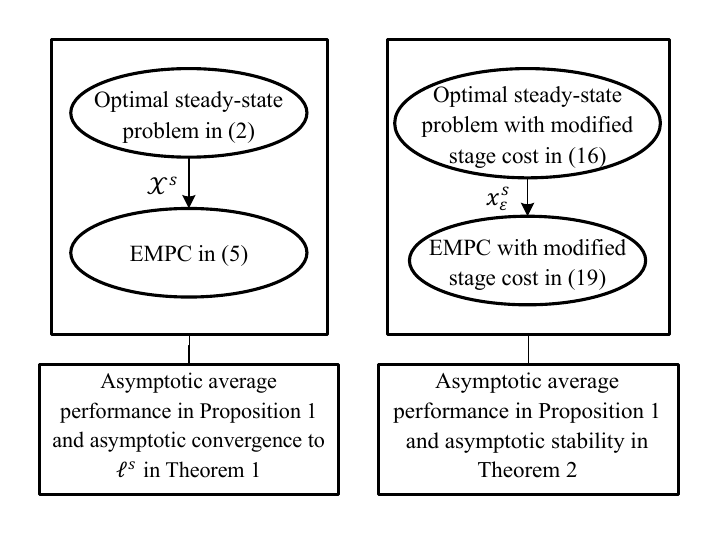}
    \caption{The proposed EMPC schemes and the obtained properties}
    \label{fig: Summary of EMPC schemes}
\end{figure}
\section{Extension to linear periodic systems} \label{sec: application to periodic}

In general, finding a storage function $\lambda(\cdot)$ to show strict dissipativity is not trivial. The storage function in \eqref{eq: strict dissipativity def with respect to Xs} should be chosen such that the rotated cost is positive definite with respect to all of the points in $\mathcal{Z}^s$, which makes finding the function more difficult than when $x^s$ is unique. Motivated by WDNs, where water demands and electricity prices are often periodic, we will in this section focus on linear periodic systems and show asymptotic convergence to the optimal steady state cost. The system dynamics for this class of systems is given by
\begin{equation} \label{eq: Model dynamics for linear periodic}
    x_{t+1}=Ax_t+Bu_t+B_dd_{t},
\end{equation}
where $d_t$ is a disturbance. 
\begin{assumption} \label{ass: periodic demands}
    The disturbance $d_t$ in \eqref{eq: Model dynamics for linear periodic} is deterministic and periodic with a known period $T>0$.
\end{assumption}

From Assumption \ref{ass: periodic demands}, the periodic disturbance can be represented by $d_{[t]_T}$, where $[t]_T$ denote the modulo operator which returns the remainder of $t$ divided by $T$. 

Under Assumption \ref{ass: periodic demands}, the system in \eqref{eq: Model dynamics for linear periodic} is lifted to a time-invariant system (Details are given in \ref{app: Lifted system}). Define the lifted variables\color{black}
\begin{equation}\label{eq:lifted_variables}
\begin{split}
    &\tilde{x}_{t+1} := \begin{bmatrix} &x_{tT+1}^\top & \cdots & x_{(t+1)T}^\top \end{bmatrix}^\top,\tilde{d} := \begin{bmatrix} &d_0^\top &\cdots &d_{T-1}^\top \end{bmatrix}^\top.\\
    & \tilde{u}_t := \begin{bmatrix} &u_{tT}^\top & \cdots & u_{(t+1)T-1}^\top \end{bmatrix}^\top,
\end{split}
\end{equation}
The lifted dynamics can be expressed as
\begin{equation}\label{eq:lifted_dynamics}
    \tilde{x}_{t+1} = \tilde{A}\tilde{x}_t + \tilde{B}\tilde{u}_t + \tilde{B}_d\,\tilde{d},
\end{equation}
where
\begin{align} \nonumber
    &\Tilde{A}=
    \begin{bmatrix}
        0 &0 &\ldots &A\\
        0 &0 & \ldots &A^2\\
        \vdots &\vdots & \vdots &\vdots\\
        0 &0 & \ldots &A^T
    \end{bmatrix}, \hspace{0.1 cm}
    \setlength\arraycolsep{1pt}
    \Tilde{B}=
    \begin{bmatrix}
B & 0 & \cdots & 0\\
A B & B & \cdots & 0\\
\vdots & \ddots & \ddots & \vdots\\
A^{T-1}B & \cdots & A B & B
\end{bmatrix},\\
    &\Tilde{B}_d=
\begin{bmatrix}
B_d & 0 & \cdots & 0\\
A B_d & B_d & \cdots & 0\\
\vdots & \ddots & \ddots & \vdots\\
A^{T-1}B_d & \cdots & A B_d & B_d
\end{bmatrix}.    \nonumber
\end{align}
In this section, we focus on a stage cost that depends on a given periodic external variable $\alpha_t$ with period $T$, i.e., $\ell(\alpha_{[t]_T},u_t)$.

\begin{remark}
In WDNs, the periodic external variable $\alpha_t$ is typically the price of electricity. Electricity tariffs usually have peak and off-peak periods during a day, and the price can be modelled as a periodic sequence with period $T=24$ hours.
\end{remark}

\begin{assumption} \label{ass: convexity of stage cost with alpha}
    Given an external variable $\alpha$, the stage cost function $\ell(\alpha,\cdot)$ is continuous and strictly convex. 
\end{assumption}
Under Assumption \ref{ass: periodic demands}, the optimal steady state problem can be defined as follows \citep{zanon2016periodic}:
\begin{subequations} \label{eq: OPSP equations}
    \begin{align}
        & \min_{\substack{x_{0|t_0},\ldots,x_{T-1|t_0}\\
        u_{0|t_0},\ldots,u_{T-1|t_0}}} \sum_{t=0}^{T-1} \ell(\alpha_{t|t_0},u_{t|t_0}), \label{eq: periodic Objective function}\\
        \text{subject to}& \hspace{0.2 cm} t=0,1,\ldots,T-2, \nonumber \\
        & x_{t+1|t_0}=Ax_{t|t_0}+B_uu_{t|t_0}+B_dd_{t|t_0}, \label{eq: periodic model dynamics}\\
        & x_{0|t_0}=Ax_{T-1|t_0}+B_uu_{T-1|t_0}+B_dd_{T-1|t_0},\label{eq: periodicity constraint}\\
        & x_{t|t_0} \in \mathcal{X}, \hspace{0.5 cm} u_{t|t_0} \in \mathcal{U} \label{eq: periodic state and input constraints},
    \end{align}
\end{subequations}
where \eqref{eq: periodicity constraint} ensures periodicity of the obtained state trajectory and $t_0$ is the starting time of the period. 
\begin{proposition}
The optimal cost in \eqref{eq: OPSP equations} is independent of the initial time $t_0$. Moreover, if an optimal solution at time $t_0$ is given by
\begin{equation} \label{eq: One solution to periodic SS problem}
    [x^{s\top}_0,\hspace{0.1 cm}\ldots,\hspace{0.1 cm}x^{s\top}_{T-1}]^\top, \hspace{0.2 cm}[u^{s\top}_0,\hspace{0.1 cm}\ldots,\hspace{0.1 cm}u^{s\top}_{T-1}]^\top.
\end{equation}
Then an optimal steady state trajectory at time $t_0+1$ is the shifted sequence
\begin{equation} \label{eq: shifted SS solution}
    [x^{s\top}_1,\hspace{0.1 cm}\ldots,\hspace{0.1 cm}x^{s\top}_{T-1}, \hspace{0.1 cm} x^{s\top}_0]^\top, \hspace{0.2 cm}[u^{s\top}_1,\hspace{0.1 cm}\ldots,\hspace{0.1 cm}u^{s\top}_{T-1}, \hspace{0.1 cm}u^{s\top}_{0}]^\top.
\end{equation}
\end{proposition}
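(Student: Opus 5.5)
The argument rests on cyclic shifting of periodic trajectories. In problem \eqref{eq: OPSP equations} the data $\alpha_{t|t_0}$ and $d_{t|t_0}$ are understood as $\alpha_{[t_0+t]_T}$ and $d_{[t_0+t]_T}$. Hence the problem posed at $t_0+1$ has exactly the data of the problem at $t_0$, cyclically rotated by one index. I will build a bijection between the feasible sets at $t_0$ and $t_0+1$ that preserves the objective value. From this bijection both claims follow at once, and induction on $t_0$ gives independence for every starting time.

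\textbf{Step 1: feasibility is preserved by the shift.} Take any feasible pair at $t_0$, namely $(x_0,\ldots,x_{T-1})$ and $(u_0,\ldots,u_{T-1})$. Define the rotated pair $x'_t=x_{t+1}$ and $u'_t=u_{t+1}$ for $t=0,\ldots,T-2$, together with $x'_{T-1}=x_0$ and $u'_{T-1}=u_0$. I will check the three groups of constraints for the problem at $t_0+1$, where the disturbance is $d'_t=d_{[t_0+1+t]_T}=d_{t+1|t_0}$ with indices taken modulo $T$.
\begin{itemize}
\item For $t\le T-3$, the dynamics $x'_{t+1}=Ax'_t+B_uu'_t+B_dd'_t$ are exactly the original dynamics at index $t+1$.
\item For $t=T-2$, the required relation is $x_0=Ax_{T-1}+B_uu_{T-1}+B_dd_{T-1|t_0}$, which is the original periodicity constraint \eqref{eq: periodicity constraint}.
\item The new periodicity constraint $x'_0=Ax'_{T-1}+B_uu'_{T-1}+B_dd'_{T-1}$ reads $x_1=Ax_0+B_uu_0+B_dd_{0|t_0}$, which is the original dynamics at $t=0$. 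This uses $d'_{T-1}=d_{[t_0+T]_T}=d_{[t_0]_T}$ by Assumption \ref{ass: periodic demands}.
\item State and input constraints are pointwise, so they are inherited directly.
\end{itemize}

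\textbf{Step 2: the cost is preserved by the shift.} The rotated cost is $\sum_{t=0}^{T-1}\ell(\alpha_{[t_0+1+t]_T},u'_t)$. Reindexing with $s=t+1$ taken mod $T$ turns it into $\sum_{s=0}^{T-1}\ell(\alpha_{[t_0+s]_T},u_s)$, again using the periodicity of $\alpha$. This is exactly the original cost.

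\textbf{Step 3: conclude both claims.} The shift map is invertible, since its inverse is rotation in the opposite direction, and the same argument applies to the inverse. So the feasible sets at $t_0$ and $t_0+1$ are in cost-preserving bijection, and the two optimal values coincide. Induction on $t_0$ then gives independence of $t_0$. Applying the map to the optimal solution \eqref{eq: One solution to periodic SS problem} yields a feasible point at $t_0+1$ with cost equal to the common optimal value, so the shifted sequence \eqref{eq: shifted SS solution} is optimal.

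\textbf{Main obstacle.} The only delicate part is the index bookkeeping in Step 1. One must make the convention $d_{t|t_0}=d_{[t_0+t]_T}$ explicit and confirm that the wrap-around constraint maps correctly onto the dynamics and vice versa. Existence of an optimum, via compactness and continuity, is not needed, because the statement assumes an optimal solution is given.
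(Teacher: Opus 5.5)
Your proof is correct and uses the same core idea as the paper: cyclically shifting a feasible periodic pair gives a feasible pair for the problem at $t_0+1$ with the same cost. You get the reverse inequality directly from the backward rotation, treated as a cost-preserving bijection, whereas the paper chains $T$ forward shifts and uses $T$-periodicity of the data to return to the problem at $t_0$, phrased as a contradiction; your version also spells out the index bookkeeping for the wrap-around constraint, which the paper glosses over.
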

\noindent \textbf{Proof}. Denote the value of the cost in \eqref{eq: periodic Objective function} using \eqref{eq: One solution to periodic SS problem} by $\ell_0$. A candidate feasible solution at time $t_0+1$ is obtained by shifting the sequences in \eqref{eq: One solution to periodic SS problem} giving the sequences in \eqref{eq: shifted SS solution}. The shifted sequences in \eqref{eq: shifted SS solution} meet all of the constraints in \eqref{eq: periodic model dynamics}-\eqref{eq: periodic state and input constraints}.

By \eqref{eq: One solution to periodic SS problem} and \eqref{eq: shifted SS solution}, the terms in \eqref{eq: periodic Objective function} are the same at times $t_0$ and $t_0+1$, but in a different order. Thus, the cost using \eqref{eq: shifted SS solution} equals $\ell_0$. Similarly, the shifted solutions with the corresponding cost of $\ell_0$ can be obtained for all $t_0+2, \ldots,t_0+T$. Hence, the optimal cost cannot increase with increasing initial time. Next, we prove that $\ell_0$ is the optimal cost for each $t_0=0,1,\ldots,T-1$ by contradiction.

Assume there exists a solution with a lower cost than $\ell_0$ for some $t_0 + i$, $i = 1,\ldots, T$. By assumption $\ell_i < \ell_0$, for $i = 1,\ldots, T$. By the periodicity of the stage cost, $\ell_T = \ell_0$. Thus, achieving a lower cost $\ell_T < \ell_0$, leads to a contradiction of the original assumption. Therefore, the cost $\ell_0$ is optimal, and the optimal cost is independent of the initial time $t_0$. Consequently, the shifted candidate solution in \eqref{eq: shifted SS solution} is optimal. \hspace{0.5cm}$\square$

By using the lifted system in \eqref{eq:lifted_dynamics}, the problem in \eqref{eq: OPSP equations} can be reformulated as a steady state problem in \eqref{eq: SS equations} as below:
\begin{subequations} \label{eq: Lifted OPSP}
    \begin{align}
        & \min_{\Tilde{x}_{t_0},\Tilde{u}_{t_0}} \Tilde{\ell}(\tilde{\alpha},\Tilde{u}_{t_0}), \\
        \text{{subject to}}&  \nonumber\\
        & \Tilde{x}_{t_0}=\Tilde{A}\Tilde{x}_{t_0}+\Tilde{B}_u\Tilde{u}_{t_0}+\Tilde{B}_d \Tilde{d} \label{eq: lifted model dynamics SS} \\
        & \Tilde{x}_{t_0} \in \tilde{\mathcal{X}}:=\underbrace{\mathcal{X}\times \ldots \times \mathcal{X}}_{\substack{\text{$T$ steps}}}, \label{eq: lifted SS state constraint} \\
        &\Tilde{u}_{t_0} \in \Tilde{\mathcal{U}}:=\underbrace{\mathcal{U}\times \ldots \times \mathcal{U}}_{\substack{\text{$T$ steps}}}
    \end{align}
\end{subequations}
where $\Tilde{\alpha}=[\alpha_0^\top, \ldots, \alpha_{T-1}^\top]^\top$. $\tilde{x}_{t_0}$ and $\tilde{u}_{t_0}$ are sequences in \eqref{eq:lifted_variables} at a given time $t_0$. 

We make the following assumption analogous to Assumption \ref{ass: existance of steady state solution} for the steady-state problem in \eqref{eq: Lifted OPSP}. 


\begin{assumption}\label{ass: slater condition}
Strong duality holds for the steady-state problem in \eqref{eq: Lifted OPSP}.
\end{assumption}

\begin{remark}
Assumption \ref{ass: slater condition} is relatively weak. The problem in \eqref{eq: Lifted OPSP} is convex, and strong duality holds when $\mathcal{X}$ and $\mathcal{U}$ are given by linear inequalities, such as box constraints. If nonlinear constraints are present, it is sufficient that there exists a feasible point where all nonlinear inequalities are strictly satisfied (Slater’s condition \cite{boyd2004convex}).
\end{remark}
\color{black}

Denote a solution to \eqref{eq: Lifted OPSP} by $\tilde{x}^s$ and $\tilde{u}^s$ with the optimal cost $\tilde{\ell}^s$. In view of Assumption \ref{ass: convexity of stage cost with alpha} and that the linear model is convex, $\tilde{u}^s$ is unique. The set of steady state solutions 
\begin{equation} \label{eq: lifted steady state trajectory}
\begin{aligned}
&\tilde{\mathcal{Z}}^s
= \Big\{
(\tilde{x},\tilde{u}^s) \in \tilde{\mathcal{X}} \times \tilde{\mathcal{U}}
\;|\;
\tilde{x} = \tilde{A}\tilde{x} + \tilde{B}_u \tilde{u}^s + \tilde{B}_d \tilde{d}, \\
& \hspace{3.8 cm} \tilde{\ell}(\tilde{\alpha}, \tilde{u}^s) = \tilde{\ell}^s
\Big\}, \\
&\tilde{\mathcal{X}}^s
= \left\{
\tilde{x} \in \tilde{\mathcal{X}}
\;\middle|\;
\exists \tilde{u} \in \tilde{\mathcal{U}} \text{ s.t. } (\tilde{x}, \tilde{u}) \in \tilde{\mathcal{Z}}^s
\right\},
\end{aligned}
\end{equation}
gives the optimal value of \eqref{eq: periodic Objective function}. Using $\tilde{\mathcal{X}}^s$ as the terminal constraint, a $T$-step EMPC problem with prediction horizon of $N=KT, K>1$ can be formulated as
\begin{subequations} \label{eq: T-step EMPC}
    \begin{align}
        & \min_{\Tilde{u}_{0|t},\ldots,\Tilde{u}_{K-1|t}} \sum_{i=0}^{K-1} \tilde{\ell}(\tilde{\alpha},\tilde{u}_{i|t}),\\
        \text{subject to} \hspace{0.3 cm} &i=0,1,\ldots,K-1, \nonumber\\
        & \Tilde{x}_{0|t}=\Tilde{x}, \label{eq: Lifted initialization in EMPC 3}\\
        & \Tilde{x}_{i+1|t}=\Tilde{A}\Tilde{x}_{i|t}+\Tilde{B}_{u}\Tilde{u}_{i|t}+\Tilde{B}_{d}\Tilde{d}, \label{eq: Lifted model dynamics in EMPC 3}\\
        & \Tilde{x}_{i|t} \in \mathcal{\tilde{X}}, \hspace{0.2 cm} \Tilde{u}_{i|t} \in \Tilde{\mathcal{U}},\label{eq: Lifted state constraints 2}\\
        & \Tilde{x}_{K|t} \in \Tilde{\mathcal{X}}^s.\label{eq: Lifted terminal constraint 3}
    \end{align}
\end{subequations}

The $T$-step EMPC problem has the same form as \eqref{eq: EMPC Formulation}. Thus, if the strict dissipativity assumption in Assumption \ref{Ass: Strict dissipativity} is met, the closed-loop cost obtained from \eqref{eq: T-step EMPC} converges to $\tilde{\ell}^s$ asymptotically by Theorem \ref{thm: Asymptotic convergence}. The following lemma and proposition provide the storage function for the system in \eqref{eq: Model dynamics for linear periodic}.

\begin{lemma} \label{lem:lower-bounding-k-infinity}
(\cite{amrit2011economic}). 
Let $\rho(x)$ be a positive definite function on a compact set 
$\mathcal{C}\subset\mathbb{R}^n$ containing the origin, i.e.,
\[
\rho(0)=0,\qquad \rho(x)>0,\ \ \forall x\in \mathcal{C}\setminus\{0\}.
\]
Then there exists a class $\mathcal{K}$ function $\gamma(\cdot)$ such that
\[
\rho(x)\ \ge\ \gamma(\|x\|), \qquad \forall x\in \mathcal{C}.
\]
\end{lemma}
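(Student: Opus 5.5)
The plan is to build the lower bound from the "radial infimum" of $\rho$, and then to regularise that function into a class $\mathcal{K}$ function. As is implicit in the way the lemma is used (and as in \citep{amrit2011economic}), I take $\rho$ to be continuous on $\mathcal{C}$; lower semicontinuity would suffice. Let $a:=\max_{x\in\mathcal{C}}\|x\|$, which exists because $\mathcal{C}$ is compact. For $s\in[0,a]$ define
\[
\psi(s):=\min\{\rho(x)\;|\;x\in\mathcal{C},\ \|x\|\geq s\}.
\]

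\textbf{Step 1: properties of $\psi$.} For every $s\in[0,a]$ the set $\{x\in\mathcal{C}\,|\,\|x\|\geq s\}$ is compact and nonempty, so the minimum is attained by continuity of $\rho$. This gives the following.
\begin{itemize}
\item[(i)] $\psi(0)=\rho(0)=0$, since $0\in\mathcal{C}$ and $\rho\ge 0$.
\item[(ii)] $\psi(s)>0$ for $s>0$, because the minimiser is a point $x\neq 0$ and $\rho$ is positive definite.
\item[(iii)] $\psi$ is nondecreasing, since the feasible set shrinks as $s$ grows.
\item[(iv)] $\psi$ is bounded by $M:=\max_{\mathcal{C}}\rho$.
\end{itemize}
By construction, $\rho(x)\geq\psi(\|x\|)$ for all $x\in\mathcal{C}$. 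The remaining defect is that $\psi$ need be neither continuous nor strictly increasing; this is the only real obstacle in the argument.

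\textbf{Step 2: regularisation.} For $s\in(0,a]$ set
\[
\gamma(s):=\frac{s}{1+s}\cdot\frac{1}{s}\int_0^s\psi(\tau)\,d\tau ,
\]
and set $\gamma(0):=0$. The integral exists because $\psi$ is monotone and bounded. Write $m(s):=\frac1s\int_0^s\psi$ for the running average.

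The function $m$ has four properties that we need.
\begin{itemize}
\item[(a)] $m$ is continuous on $(0,a]$.
\item[(b)] $m$ is nondecreasing, because it is the running average of a nondecreasing function.
\item[(c)] $m(s)\le\psi(s)$, because $\psi(\tau)\le\psi(s)$ for $\tau\le s$.
\item[(d)] $m(s)>0$ for $s>0$, because $\psi>0$ on $(0,s]$.
\end{itemize}

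From these, the required properties of $\gamma$ follow.
\begin{itemize}
\item[(1)] $\gamma$ is strictly increasing on $(0,a]$. It is the product of the strictly increasing positive factor $s/(1+s)$ and the nondecreasing positive factor $m$.
\item[(2)] $\gamma$ is continuous at $0$, since $0\le\gamma(s)\le sM/(1+s)\to0$.
\item[(3)] $\gamma(s)\le m(s)\le\psi(s)$, because $s/(1+s)<1$.
\end{itemize}
Hence $\rho(x)\geq\psi(\|x\|)\geq\gamma(\|x\|)$ for all $x\in\mathcal{C}$.

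\textbf{Step 3: matching the definition of class $\mathcal{K}$.} The definition of class $\mathcal{K}$ asks for a domain of the form $[0,b)$. If a domain larger than $[0,a]$ is wanted, extend $\gamma$ beyond $a$ by $\gamma(s):=\gamma(a)+(s-a)$. This keeps $\gamma$ continuous and strictly increasing, and does not affect the inequality, because $\|x\|\le a$ on $\mathcal{C}$. This completes the plan.
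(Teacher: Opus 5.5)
Your proof is correct and follows the standard route behind the cited result (the paper gives no proof of its own and defers to \citep{amrit2011economic}): you bound $\rho$ below by the radial minimum $\psi(s)=\min\{\rho(x)\,|\,x\in\mathcal{C},\ \|x\|\ge s\}$ and then regularise $\psi$, which your running average times $s/(1+s)$ does cleanly. You were right to add continuity (or lower semicontinuity) of $\rho$, because the lemma as literally stated is false without it (take $\mathcal{C}=[0,1]$, $\rho(0)=0$, $\rho(x)=1-x$ on $(0,1)$, $\rho(1)=1$); in the merely lower semicontinuous case, $\max_{\mathcal{C}}\rho$ need not exist, so replace $M$ by $\psi(a)<\infty$ in your continuity-at-zero bound.
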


\begin{proposition} \label{prop: strict dissipativity of lifted}
    The system in \eqref{eq: Model dynamics for linear periodic} is strictly dissipative on $\mathcal{X} \times \mathcal{U}$ with respect to the supply rate $\tilde{\ell}(\tilde{\alpha},\tilde{u})-\tilde{\ell}^s$ with $\tilde{\lambda}(\tilde{x})=\mu^\top\tilde{x}$ as storage function where $\mu$ is any optimal Lagrange multiplier associated with \eqref{eq: lifted model dynamics SS}.  
\end{proposition}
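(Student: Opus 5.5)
The plan is to follow the standard Lagrangian-duality argument of \citep{Diehl2011,amrit2011economic}, adapted to the lifted problem \eqref{eq: Lifted OPSP} and to the set $\tilde{\mathcal{Z}}^s$ rather than a single point.

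First, we introduce a multiplier $\mu$ for the equality constraint \eqref{eq: lifted model dynamics SS}. The Lagrangian is
\[
\mathcal{L}(\tilde{x},\tilde{u},\mu)=\tilde{\ell}(\tilde{\alpha},\tilde{u})+\mu^\top\big(\tilde{A}\tilde{x}+\tilde{B}_u\tilde{u}+\tilde{B}_d\tilde{d}-\tilde{x}\big),
\]
and the set constraints $\tilde{\mathcal{X}}\times\tilde{\mathcal{U}}$ are kept implicit. By Assumption \ref{ass: slater condition} there is an optimal $\mu$ with
\[
\min_{(\tilde{x},\tilde{u})\in\tilde{\mathcal{X}}\times\tilde{\mathcal{U}}}\mathcal{L}(\tilde{x},\tilde{u},\mu)=\tilde{\ell}^s .
\]
If polyhedral constraints are dualised separately, we first fold them back into the minimisation, using complementary slackness. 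With $\tilde{\lambda}(\tilde{x})=\mu^\top\tilde{x}$, the rotated cost is
\[
\tilde{L}(\tilde{x},\tilde{u})=\tilde{\ell}(\tilde{\alpha},\tilde{u})-\tilde{\ell}^s+\mu^\top\tilde{x}-\mu^\top(\tilde{A}\tilde{x}+\tilde{B}_u\tilde{u}+\tilde{B}_d\tilde{d}),
\]
which equals $\mathcal{L}(\tilde{x},\tilde{u},\mu)-\tilde{\ell}^s$ up to sign convention on $\mu$. Hence $\tilde{L}\ge 0$ on $\tilde{\mathcal{X}}\times\tilde{\mathcal{U}}$. This gives plain dissipativity.

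Second, we characterise the zero set of $\tilde{L}$ to upgrade this to strict dissipativity with respect to $\tilde{\mathcal{Z}}^s$. Suppose $\tilde{L}(\tilde{x},\tilde{u})=0$. Then $(\tilde{x},\tilde{u})$ minimises the Lagrangian. Because $\tilde{\ell}(\tilde{\alpha},\cdot)$ is strictly convex (Assumption \ref{ass: convexity of stage cost with alpha}) and the remaining terms are affine, $\mathcal{L}(\cdot,\cdot,\mu)$ is strictly convex in $\tilde{u}$. Therefore all minimisers share the same $\tilde{u}$, which must be $\tilde{u}^s$, since $(\tilde{x}^s,\tilde{u}^s)$ is a minimiser by strong duality.

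The difficulty is in $\tilde{x}$. The Lagrangian is affine in $\tilde{x}$, so a minimiser $(\tilde{x},\tilde{u}^s)$ need not satisfy $\tilde{x}=\tilde{A}\tilde{x}+\tilde{B}_u\tilde{u}^s+\tilde{B}_d\tilde{d}$. Nonstrictness in $\tilde{x}$ is exactly the gap between dissipativity and strict dissipativity in \citep{Diehl2011}. I would first try to use the structure of $\tilde{A}$, whose only nonzero block column is the last one. Stationarity/minimisation in $\tilde{x}$ over the box $\tilde{\mathcal{X}}$ forces $(\tilde{A}-I)^\top\mu$ to lie in the normal cone of $\tilde{\mathcal{X}}$ at $\tilde{x}$. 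Combined with the steady-state equation and the block-triangular form, I hope to show that the zero set of $\tilde{L}$ is contained in $\tilde{\mathcal{Z}}^s$.

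If this fails, the fallback is a standard remedy. Because the statement only requires strict dissipativity with respect to $\tilde{\mathcal{Z}}^s$, and $\tilde{\mathcal{Z}}^s$ is the full fibre over $\tilde{u}^s$ intersected with the steady-state manifold, one could add a quadratic term to the storage function in the spirit of \citep{Diehl2011}. One could also restrict attention to the pairs actually reached by the closed loop. I expect this step, namely controlling the $\tilde{x}$-direction of the zero set, to be the main obstacle.

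Third, once $\tilde{L}$ is known to be continuous, nonnegative, and zero exactly on $\tilde{\mathcal{Z}}^s$, the function
\[
\rho(s)=\min\{\tilde{L}(\tilde{x},\tilde{u}) : \|(\tilde{x},\tilde{u})\|_{\tilde{\mathcal{Z}}^s}\ge s\}
\]
is well defined by compactness of $\tilde{\mathcal{X}}\times\tilde{\mathcal{U}}$ (Assumption \ref{ass: compact and convex constraint}). It is positive for $s>0$, and by Lemma \ref{lem:lower-bounding-k-infinity} it can be minorised by a class $\mathcal{K}$ function. This yields \eqref{eq: strict dissipativity def with respect to Xs} for the lifted system with storage function $\mu^\top\tilde{x}$. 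The storage function is independent of which optimal multiplier is chosen, since the argument uses only the optimality of $\mu$.
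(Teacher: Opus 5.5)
Your first and third steps are the same as the paper's proof. Strong duality gives $\tilde L\ge 0$, and once the zero set is known to be $\tilde{\mathcal Z}^s$, compactness and Lemma~\ref{lem:lower-bounding-k-infinity} give the class-$\mathcal K$ bound. The paper handles your second step in one sentence (``it is easy to check that $\tilde L$ only equals zero if $(\tilde x,\tilde u)\in\tilde{\mathcal Z}^s$''). You correctly single it out as the crux, but you leave it open, so the proposal is not a proof.

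More importantly, the step cannot be closed. The Lagrangian separates as $\phi(\tilde u)+\mu^\top(I-\tilde A)\tilde x+\mathrm{const}$, with $\phi(\tilde u)=\tilde\ell(\tilde\alpha,\tilde u)-\mu^\top\tilde B_u\tilde u$. Hence the zero set of $\tilde L$ is $F\times\{\tilde u^s\}$, where $F=\arg\min_{\tilde x\in\tilde{\mathcal X}}\mu^\top(I-\tilde A)\tilde x$. Suppose some optimal steady state $\tilde x^s$ lies in the interior of $\tilde{\mathcal X}$, which is the generic case. Since $(\tilde x^s,\tilde u^s)$ minimises the Lagrangian, $(I-\tilde A)^\top\mu=0$ for every optimal $\mu$. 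Then $\tilde L$ does not depend on $\tilde x$ and vanishes on all of $\tilde{\mathcal X}\times\{\tilde u^s\}$. That set strictly contains $\tilde{\mathcal Z}^s$ unless $\tilde A=I$, which is impossible for $T\ge 2$ because the first $T-1$ block columns of $\tilde A$ are zero. So the block structure you hoped to exploit is exactly what defeats the claim.

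A concrete instance: take $x_{t+1}=x_t+u_t+d_t$, $T=2$, $d_0=d_1=-1$, $\ell(\alpha,u)=u^2$, $\mathcal X=[-1,1]$, $\mathcal U=[0,2]$, and write $\tilde x=(x_1,x_2)$, $\tilde u=(u_0,u_1)$.
\begin{itemize}
\item Then $\tilde u^s=(1,1)$ and $\tilde{\mathcal Z}^s=\{((x,x),(1,1)):x\in[-1,1]\}$.
\item Evaluating the dual function at $\tilde u=(1,1)$ shows it is at most $2-2|\mu_1|$, so every optimal multiplier has $\mu_1=0$.
\item The multiplier $\mu=(0,2)$ gives $\tilde L=(u_0-1)^2+(u_1-1)^2$.
\item This vanishes at $\tilde x=(1,-1)$, $\tilde u=(1,1)$, a point at distance $\sqrt2$ from $\tilde{\mathcal Z}^s$.
\end{itemize}
So the proposition with storage $\mu^\top\tilde x$ is false in general, and the paper's proof has the same hole.

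Neither fallback proves the statement as posed. Adding a quadratic term changes the prescribed storage function, and restricting to closed-loop pairs changes the domain on which the inequality must hold. The quadratic idea is nevertheless the right repair. In the example, the storage $\mu^\top\tilde x+c(x_1-x_2)^2$ with $0<c<1$ yields the rotated cost $(u_0-1)^2+(1-c)(u_1-1)^2+c(x_1-x_2)^2$, which is positive definite with respect to $\tilde{\mathcal Z}^s$.

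What survives with the linear storage is the bound $\tilde L\ge\beta(\|\tilde u-\tilde u^s\|)$, $\beta\in\mathcal K$, which your strict-convexity argument already gives. Inserted into the proof of Theorem~\ref{thm: Asymptotic convergence}, it still yields $\tilde u_t\to\tilde u^s$ and convergence of the economic cost. It does not yield convergence of $(\tilde x_t,\tilde u_t)$ to $\tilde{\mathcal Z}^s$.
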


\noindent \textbf{Proof}. Under Assumption \ref{ass: slater condition}, strong duality holds. Let $\mu \geq 0$ be a Lagrange multiplier; hence,
\begin{equation}
\begin{split}
     \tilde{L}(\tilde{\alpha},\tilde{x},\tilde{u},\mu)&=\tilde{\ell}(\tilde{\alpha},\tilde{u})+\mu^\top (\tilde{x}-\tilde{A}\tilde{x}-\tilde{B}_u\tilde{u}-\tilde{B}_d\tilde{d})\\
    &-\tilde{\ell}^s\geq 0,
\end{split}
\end{equation}
for any feasible $\tilde{x}$ and $\tilde{u}$. Note that depending on the rank of $I-A$, \eqref{eq: lifted steady state trajectory} may admit an infinite number of feasible optimal steady state trajectories. The set of optimal Lagrange multipliers may not be unique and can also be infinite. Denote one optimal Lagrange multiplier by $\mu^*$. It is easy to check that $\tilde{L}(\tilde{\alpha},\tilde{x},\tilde{u},\mu^*)$ only eqauls zero if $(\tilde{x},\tilde{u})\in \tilde{\mathcal{Z}}^s$. Hence, from Lemma \ref{lem:lower-bounding-k-infinity}, there exists a $\beta \in \mathcal{K}$ such that
\[
\tilde{L}(\tilde{\alpha},\tilde{x},\tilde{u},\mu^*) \geq \beta(||\tilde{x}||_{\tilde{\mathcal{X}}^s}).\hspace{0.5 cm}\square
\]
\begin{corollary}\label{cor: T-step EMPC asymptotic convergence}
Let Assumptions \ref{ass: periodic demands} and \ref{ass: convexity of stage cost with alpha} hold. Consider the $T$-step EMPC problem in \eqref{eq: T-step EMPC}. Then, the closed-loop solution given by \eqref{eq: T-step EMPC} asymptotically converges to $\tilde{\mathcal Z}^s$. Consequently, $
\tilde{\ell}(\tilde{\alpha},\tilde{u}_{t}) \to \tilde{\ell}^s
$ as $t\to\infty$.
\end{corollary}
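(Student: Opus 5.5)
The plan is to reduce the corollary to Theorem \ref{thm: Asymptotic convergence} applied to the lifted time-invariant system \eqref{eq:lifted_dynamics}. The $T$-step EMPC \eqref{eq: T-step EMPC} then becomes an instance of \eqref{eq: EMPC Formulation} with the following identifications:
\begin{itemize}
\item state $\tilde{x}$, input $\tilde{u}$, and dynamics $\tilde f(\tilde x,\tilde u)=\tilde A\tilde x+\tilde B\tilde u+\tilde B_d\tilde d$;
\item constraint sets $\tilde{\mathcal X}$, $\tilde{\mathcal U}$;
\item stage cost $\tilde\ell(\tilde\alpha,\cdot)$, horizon $K$, and terminal set $\tilde{\mathcal X}^s$.
\end{itemize}
The work is to check the standing Assumptions \ref{ass: continuity of model}--\ref{Ass: Strict dissipativity} for this lifted problem.

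\textbf{Step 1: routine assumptions.} $\tilde f$ is affine, hence continuous, which gives Assumption \ref{ass: continuity of model}. Products of compact convex sets are compact and convex, which gives Assumption \ref{ass: compact and convex constraint}. The lifted cost is $\tilde\ell(\tilde\alpha,\tilde u)=\sum_{j=0}^{T-1}\ell(\alpha_j,u_j)$. It is a sum of continuous functions, each strictly convex in its own block by Assumption \ref{ass: convexity of stage cost with alpha}, so it is continuous and strictly convex in $\tilde u$. This gives Assumption \ref{Ass: Strict convexity of l(u)} and uniqueness of $\tilde u^s$. Existence of a steady-state solution (Assumption \ref{ass: existance of steady state solution}) is implied by the strong duality of Assumption \ref{ass: slater condition}, or equivalently by feasibility plus compactness and continuity.

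\textbf{Step 2: strict dissipativity.} Invoke Proposition \ref{prop: strict dissipativity of lifted}. It gives the linear storage $\tilde\lambda(\tilde x)=\mu^{*\top}\tilde x$ with
\[
\tilde\lambda(\tilde x)-\tilde\lambda(\tilde f(\tilde x,\tilde u))+\tilde\ell(\tilde\alpha,\tilde u)-\tilde\ell^s=\tilde L(\tilde\alpha,\tilde x,\tilde u,\mu^*)\geq 0 .
\]
This quantity vanishes only on $\tilde{\mathcal Z}^s$. Lemma \ref{lem:lower-bounding-k-infinity}, applied on the compact set $\tilde{\mathcal X}\times\tilde{\mathcal U}$ with the distance to $\tilde{\mathcal Z}^s$, then yields a positive definite lower bound $\rho(\|(\tilde x,\tilde u)\|_{\tilde{\mathcal Z}^s})$. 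That is exactly Assumption \ref{Ass: Strict dissipativity} for the lifted system.

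\textbf{Step 3: apply the earlier results.} The lemma on equivalence of \eqref{eq: EMPC Formulation} and \eqref{eq: Rotated EMPC Formulation} shows that \eqref{eq: T-step EMPC} has the same minimisers as its rotated version. Proposition \ref{Prop: Recursive feasibility centralised} gives recursive feasibility: append $\tilde u^s$ at a terminal point of $\tilde{\mathcal X}^s$. Theorem \ref{thm: Asymptotic convergence} then yields $\|(\tilde x_t,\tilde u_{0|t}^0)\|_{\tilde{\mathcal Z}^s}\to 0$. Since $\tilde u^s$ is unique, this forces $\tilde u_t\to\tilde u^s$. Continuity of $\tilde\ell$ gives $\tilde\ell(\tilde\alpha,\tilde u_t)\to\tilde\ell^s$.

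\textbf{Main obstacle.} I expect the hard step to be Step 2, specifically the claim that $\tilde L(\cdot,\mu^*)$ is zero only on $\tilde{\mathcal Z}^s$ and is bounded below by a positive definite function of the distance to $\tilde{\mathcal Z}^s$, rather than merely of $\|\tilde x\|_{\tilde{\mathcal X}^s}$.
\begin{itemize}
\item \emph{Input part.} By strong duality and strict convexity of $\tilde\ell$ in $\tilde u$, any zero of $\tilde L$ minimises the Lagrangian. The Lagrangian's minimiser in $\tilde u$ is unique, so it equals $\tilde u^s$.
\item \emph{State part.} This needs more care, because $\tilde L$ is affine in $\tilde x$. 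I would argue that at $\tilde u=\tilde u^s$ the stationarity condition gives $(I-\tilde A)^\top\mu^*=0$ on the relevant directions, or use the multipliers of the state constraints. With those, zeros of $\tilde L$ correspond to feasible steady states with the optimal input.
\item \emph{Fallback.} If this fails in general, I would interpret the corollary as asserting convergence of the input and the cost, which only requires the input part. Input convergence already follows from strict convexity, since $\tilde L\ge c(\|\tilde u-\tilde u^s\|)$. A summability argument as in the proof of Theorem \ref{thm: Asymptotic convergence} then applies.
\end{itemize}
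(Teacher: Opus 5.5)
\textbf{There is a gap in your Step 2 (state part), and it is the same gap as in the paper.} Your Steps 1--3 follow the paper's proof, which simply cites Proposition~\ref{prop: strict dissipativity of lifted} and then Theorem~\ref{thm: Asymptotic convergence}. You located the weak point correctly, but your proposed repair goes the wrong way, so convergence of the pair $(\tilde x_t,\tilde u_t)$ to $\tilde{\mathcal Z}^s$ is not established.

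Because the lifted dynamics are affine and the cost does not depend on $\tilde x$, the rotated cost is separable:
\[
\tilde L(\tilde x,\tilde u)=\big[g(\tilde u)-g(\tilde u^s)\big]+\mu^{*\top}(I-\tilde A)(\tilde x-\tilde x^s),\qquad g(\tilde u):=\tilde\ell(\tilde\alpha,\tilde u)-\mu^{*\top}\tilde B_u\tilde u .
\]
Since $(\tilde x^s,\tilde u^s)$ minimises $\tilde L$ over the product $\tilde{\mathcal X}\times\tilde{\mathcal U}$, each bracket is nonnegative there. Suppose some optimal steady state lies in the interior of $\tilde{\mathcal X}$, which is typical when there is a continuum of them, as with the shifted tank trajectories in the case study. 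Then the linear term is minimised at an interior point, so $(I-\tilde A)^\top\mu^*=0$. This is exactly your stationarity condition, and the state-constraint multipliers vanish by complementary slackness. But then $\tilde L$ is independent of $\tilde x$ and vanishes on all of $\tilde{\mathcal X}\times\{\tilde u^s\}$. For $T\ge 2$ this is strictly larger than $\tilde{\mathcal Z}^s$, because lifted steady states must satisfy the intra-period dynamics.

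A concrete instance: take $x_{k+1}=x_k+u_k$, $d\equiv 0$, $\ell(u)=u^2$, $\mathcal X=\mathcal U=[-1,1]$, $T=2$. The unique multiplier is $\mu^*=0$ and $\tilde L=u_0^2+u_1^2$. This is zero at $\tilde x=(1,-1)$, $\tilde u=(0,0)$, which is not in $\tilde{\mathcal Z}^s$. So Assumption~\ref{Ass: Strict dissipativity} with respect to $\tilde{\mathcal Z}^s$ cannot come from the linear multiplier storage. The ``zero only on $\tilde{\mathcal Z}^s$'' claim of Proposition~\ref{prop: strict dissipativity of lifted} fails in general, and invoking it, as you and the paper both do, leaves the first assertion of the corollary unproved.

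\textbf{Your input-part argument and your fallback are sound, and they prove the cost statement.} The separable form gives $\tilde L\ge g(\tilde u)-g(\tilde u^s)\ge\beta(\|\tilde u-\tilde u^s\|)$ with $\beta\in\mathcal K$, by strict convexity of $g$ on the compact set $\tilde{\mathcal U}$ and Lemma~\ref{lem:lower-bounding-k-infinity}. The telescoping argument of Theorem~\ref{thm: Asymptotic convergence} then goes through; $V^0$ is bounded below because $\tilde L\ge 0$ and $\tilde\lambda$ is linear on a compact set. This yields $\sum_t\beta(\|\tilde u_t-\tilde u^s\|)<\infty$, hence $\tilde u_t\to\tilde u^s$ and $\tilde\ell(\tilde\alpha,\tilde u_t)\to\tilde\ell^s$.

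\textbf{The state part needs an argument from the dynamics, not from dissipativity.} For $A=I$, as in the case study, it can be closed as follows.
\begin{itemize}
\item Let $\Phi(z)$ be the stacked trajectory from $z$ under $\tilde u^s$.
\item Existence of a steady state forces $c:=\sum_j A^{T-1-j}(Bu^s_j+B_dd_j)=0$. Hence every $\Phi(z)$ solves the lifted steady-state equation.
\item The closed loop satisfies $\tilde x_{t+1}=\Phi(x_{tT})+\tilde B_u(\tilde u_t-\tilde u^s)$.
\item By compactness, every limit point of $\tilde x_t$ has the form $\Phi(z^*)\in\tilde{\mathcal X}$, so it lies in $\tilde{\mathcal X}^s$.
\end{itemize}
For general $A$ you would also need the limit points $z^*$ of the period-end states to be fixed points of $z\mapsto A^Tz+c$. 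Neither your argument nor the paper's provides this.

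Note also that you, like the paper, rely on Assumption~\ref{ass: slater condition}, which the corollary does not list among its hypotheses.
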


\noindent \textbf{Proof}. The lifted system in \eqref{eq:lifted_dynamics} and the T-step EMPC problem in \eqref{eq: T-step EMPC} have the same form as the EMPC problem in \eqref{eq: EMPC Formulation}. Moreover, by Proposition \ref{prop: strict dissipativity of lifted}, the lifted system is strictly dissipative with respect to the set $\tilde{\mathcal Z}^s$. Therefore, the conditions of Theorem \ref{thm: Asymptotic convergence} are satisfied for the lifted formulation. Hence, the closed-loop pair $(\tilde{x}_t,\tilde{u}_{t})$ converges asymptotically to $\tilde{\mathcal Z}^s$, and $\tilde{\ell}(\tilde{\alpha},\tilde{u}_{0|t}) \to \tilde{\ell}^s$. $\square$

\begin{corollary} \label{coro: asymptotic stability lifted}
    If the stage costs in \eqref{eq: Lifted OPSP} and \eqref{eq: T-step EMPC} are modified using \eqref{eq: Modified SS Objective function} and \eqref{eq: Modified objective function in control problem}, where $\varepsilon$ is chosen according to Lemma \ref{lem: bound for epsilon}, asymptotic stability with respect to a specific steady state is guaranteed. 
\end{corollary}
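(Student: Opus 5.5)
The plan is to reduce Corollary \ref{coro: asymptotic stability lifted} to Theorem \ref{thm: AS for the modified}, applied to the lifted time-invariant system \eqref{eq:lifted_dynamics}. The modified lifted steady-state problem is
\[
\min_{\tilde{x},\tilde{u}} \tilde{\ell}(\tilde{\alpha},\tilde{u})+\varepsilon \tilde{x}^\top\tilde{x},
\]
subject to \eqref{eq: lifted model dynamics SS} and $\tilde{x}\in\tilde{\mathcal{X}}$, $\tilde{u}\in\tilde{\mathcal{U}}$. The modified $T$-step EMPC uses the same stage cost and replaces \eqref{eq: Lifted terminal constraint 3} by $\tilde{x}_{K|t}=\tilde{x}^s_\varepsilon$. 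First, Assumption \ref{ass: Uniqueness of steady state assumption} should follow here rather than be assumed. The lifted dynamics are affine and $\tilde{\mathcal{X}},\tilde{\mathcal{U}}$ are convex and compact, so the feasible set is convex and compact. The objective is strictly convex in $(\tilde{x},\tilde{u})$, since $\tilde{\ell}$ is a sum of strictly convex $\ell(\alpha_i,\cdot)$ by Assumption \ref{ass: convexity of stage cost with alpha} and $\varepsilon\tilde{x}^\top\tilde{x}$ is strictly convex in $\tilde{x}$. Hence the minimiser $(\tilde{x}^s_\varepsilon,\tilde{u}^s_\varepsilon)$ exists and is unique. Lemma \ref{lem: bound for epsilon} then applies verbatim with $R=\max_{\tilde{x}\in\tilde{\mathcal{X}}}\tilde{x}^\top\tilde{x}$, giving $\tilde{\ell}(\tilde{\alpha},\tilde{u}^s_\varepsilon)-\tilde{\ell}^s\le\gamma$. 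Lemma \ref{lem: cost difference with modified gamma} gives the open-loop performance bound.

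The main step is to verify Assumption \ref{Ass: Modified strict dissipativity} for the lifted problem, mimicking Proposition \ref{prop: strict dissipativity of lifted}. The modified lifted steady-state problem is convex with linear equality constraints, and Slater-type conditions as in Assumption \ref{ass: slater condition} give strong duality. So there is a multiplier $\bar{\mu}$ for \eqref{eq: lifted model dynamics SS} such that
\[
\bar{L}(\tilde{x},\tilde{u}) = \tilde{\ell}(\tilde{\alpha},\tilde{u})-\tilde{\ell}(\tilde{\alpha},\tilde{u}^s_\varepsilon)+\varepsilon(\tilde{x}^\top\tilde{x}-\tilde{x}^{s\top}_\varepsilon\tilde{x}^s_\varepsilon)+\bar{\mu}^\top(\tilde{x}-\tilde{A}\tilde{x}-\tilde{B}_u\tilde{u}-\tilde{B}_d\tilde{d})\ge 0
\]
on $\tilde{\mathcal{X}}\times\tilde{\mathcal{U}}$. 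This is exactly the rotated cost \eqref{eq: modified rotated stage cost} with storage $\bar{\lambda}(\tilde{x})=\bar{\mu}^\top\tilde{x}$. The affine terms drop out of the storage difference because $\tilde{B}_d\tilde{d}$ is constant and can be absorbed.

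Positive definiteness also follows from strict convexity. $\bar{L}$ is a strictly convex function of $(\tilde{x},\tilde{u})$, being the strictly convex objective plus linear terms. By the saddle-point property it is minimised, with value $0$, at $(\tilde{x}^s_\varepsilon,\tilde{u}^s_\varepsilon)$. So $\bar{L}$ vanishes only at that unique point and is positive elsewhere. Moreover, $\varepsilon\tilde{x}^\top\tilde{x}$ is $2\varepsilon$-strongly convex, so we expect the explicit bound $\bar{L}(\tilde{x},\tilde{u})\ge\varepsilon\|\tilde{x}-\tilde{x}^s_\varepsilon\|^2$. This uses first-order optimality of the convex function $\bar{L}$ over the convex set at its minimiser. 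It gives $\bar{\rho}(r)=\varepsilon r^2$ directly, avoiding Lemma \ref{lem:lower-bounding-k-infinity}. This step, specifically the existence of the multiplier and the first-order argument, is where I expect the care to be needed.

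Finally, Theorem \ref{thm: AS for the modified} is invoked for the lifted system. The shifted candidate with terminal pair $(\tilde{x}^s_\varepsilon,\tilde{u}^s_\varepsilon)$ gives recursive feasibility, as in Proposition \ref{Prop: Recursive feasibility centralised}. The value function of the rotated modified problem is then a Lyapunov function by Lemma \ref{lem: upper bound for Lyap} and the descent inequality. Hence $\tilde{x}^s_\varepsilon$, i.e.\ a specific $T$-periodic orbit of \eqref{eq: Model dynamics for linear periodic}, is asymptotically stable for the lifted closed loop on $\tilde{\mathcal{X}}^\varepsilon_N$. For the upper bound in Lemma \ref{lem: upper bound for Lyap}, continuity of the value function at $\tilde{x}^s_\varepsilon$ is needed. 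It follows because the lifted problem is a parametric convex program with affine dependence on the initial state, and it should be noted as an implicit requirement, as in Theorem \ref{thm: AS for the modified}.
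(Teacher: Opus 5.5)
Your proposal is correct and follows the same route as the paper: uniqueness of $(\tilde{x}^s_\varepsilon,\tilde{u}^s_\varepsilon)$ from strict convexity, then an optimal Lagrange multiplier of the lifted steady-state constraint used as a linear storage function to verify Assumption~\ref{Ass: Modified strict dissipativity} (as in Proposition~\ref{prop: strict dissipativity of lifted}), and finally Theorem~\ref{thm: AS for the modified} applied to the time-invariant lifted system. Your strong-convexity/first-order argument giving $\bar{L}\ge\varepsilon\|\tilde{x}-\tilde{x}^s_\varepsilon\|^2$ is a sharper, explicit replacement for the paper's appeal to Lemma~\ref{lem:lower-bounding-k-infinity}, and you are right to flag continuity of the value function; note, though, that convexity alone gives continuity only in the interior of the feasible set, so if $\tilde{x}^s_\varepsilon$ lies on its boundary you need something like polyhedral constraints, an assumption the paper also leaves implicit.
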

\noindent \textbf{Proof}. For the modified stage cost $\tilde{\ell}(\tilde{\alpha},\tilde{u})+\varepsilon\tilde{x}^\top\tilde{x}$, the optimal steady state pair ($\tilde{x}^s_\varepsilon, \tilde{u}^s_\varepsilon$) is unique. Similar to Proposition \ref{prop: strict dissipativity of lifted}, it can be verified that using the optimal Lagrange multiplier as the storage function, the closed-loop system is strictly dissipative with respect to $x^s_{\varepsilon}$. Hence, asymptotic stability can be guaranteed by Theorem \ref{thm: AS for the modified}. $\square$

\section{Case study: Richmond water distribution network} \label{sec: simulation results}
The Richmond water distribution network is part of the Yorkshire water supply area in the UK (\cite{van2004operational,exeter}). A schematic representation of the Richmond network is shown in Fig. \ref{fig: Richmond skeleton}.
\begin{figure}[b] 
      \centering
      \includegraphics[scale=0.055]{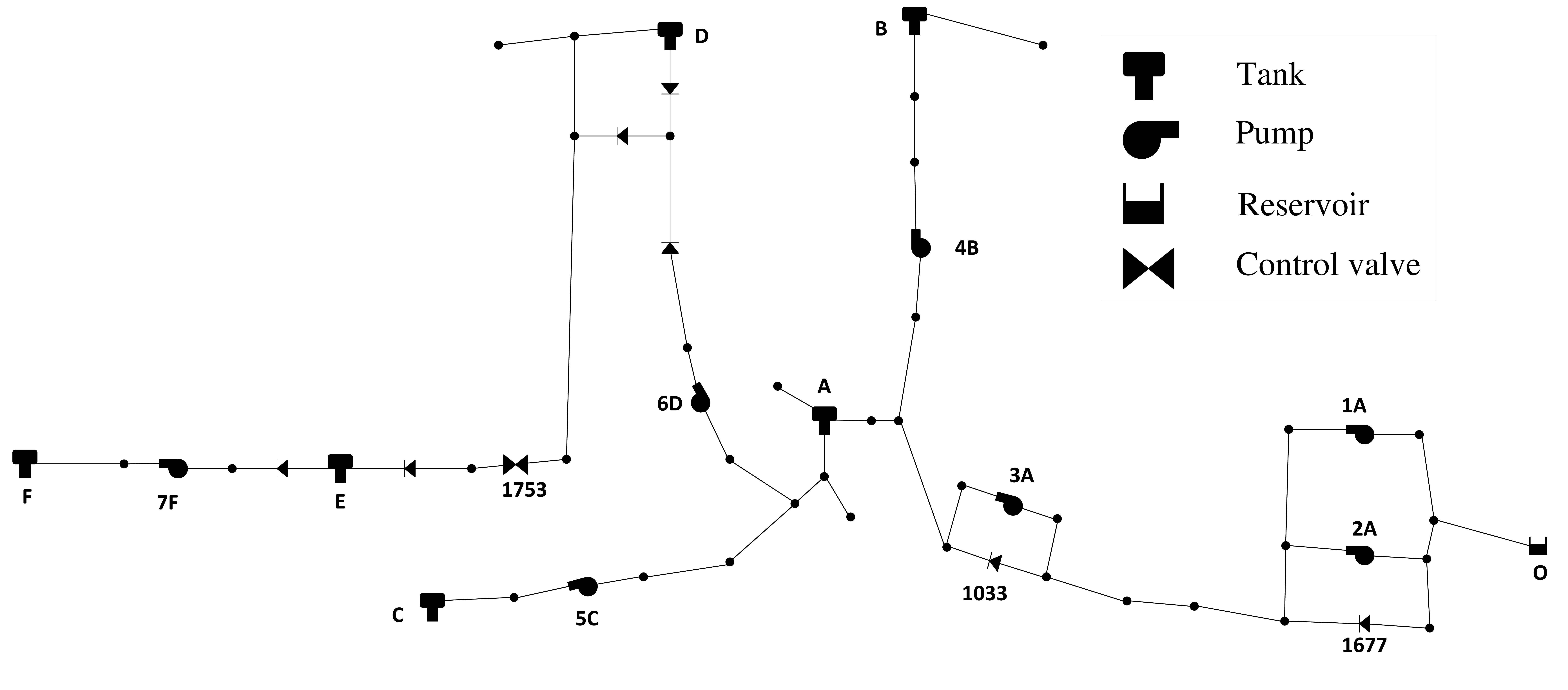}
      \caption{Richmond water distribution network}
      \label{fig: Richmond skeleton}
\end{figure}

The control-oriented model for this system can be formulated as (\cite{ocampo2012hierarchical,arastou2025optimization})
\begin{equation} \label{eq: dynamics for Richmond}
    x_{t+1}=x_t+B_u u_t+B_d d_{[t]_T},
\end{equation}
where $x \in \mathbb{R}^{6\times 1}$ are water levels in tanks, $u \in\mathbb{R}^{6\times 1}$ represents water flows through pumps, and $d \in \mathbb{R}^{10 \times 1 }$ is the vector of water demands. Moreover,
\begin{figure}[h]
\centering
\resizebox{\linewidth}{!}{$
\begin{gathered}
B_u =
\begin{bmatrix}
 \tfrac{\Delta t}{S_A} & -\tfrac{\Delta t}{S_A} & -\tfrac{\Delta t}{S_A} & -\tfrac{\Delta t}{S_A} & 0 & 0 \\
 0 & \tfrac{\Delta t}{S_B} & 0 & 0 & 0 & 0 \\
 0 & 0 & \tfrac{\Delta t}{S_C} & 0 & 0 & 0 \\
 0 & 0 & 0 & \tfrac{\Delta t}{S_D} & -\tfrac{\Delta t}{S_D} & 0 \\
 0 & 0 & 0 & 0 & \tfrac{\Delta t}{S_E} & -\tfrac{\Delta t}{S_E} \\
 0 & 0 & 0 & 0 & 0 & \tfrac{\Delta t}{S_F}
\end{bmatrix},
\\[1.2em]
B_d =
\begin{bmatrix}
 -\tfrac{\Delta t}{S_A} & -\tfrac{\Delta t}{S_A} & -\tfrac{\Delta t}{S_A} & 0 & 0 & 0 & 0 & 0 & 0 & 0 \\
 0 & 0 & 0 & -\tfrac{\Delta t}{S_B} & 0 & 0 & 0 & 0 & 0 & 0 \\
 0 & 0 & 0 & 0 & -\tfrac{\Delta t}{S_C} & 0 & 0 & 0 & 0 & 0 \\
 0 & 0 & 0 & 0 & 0 & -\tfrac{\Delta t}{S_D} & -\tfrac{\Delta t}{S_D} & -\tfrac{\Delta t}{S_D} & 0 & 0 \\
 0 & 0 & 0 & 0 & 0 & 0 & 0 & 0 & -\tfrac{\Delta t}{S_E} & 0 \\
 0 & 0 & 0 & 0 & 0 & 0 & 0 & 0 & 0 & -\tfrac{\Delta t}{S_F}
\end{bmatrix},
\end{gathered}
$}
\end{figure}

where the sampling time is chosen as $\Delta t = 1$ hour, and $S_i$ is the area of the $i$-th tank. Tank diameters can be found in \citep{exeter}. 

From the benchmark information, water demands are periodic with a period of $T = 24$ hours. The water demands are given by $d_t=m_t\bar{d}$, where $\bar{d} \in \mathbb{R}^{10 \times 1}$ and $m_t$ is the demand multiplier shown in Fig. \ref{fig: demand and electricity price Richmond}. The same demand multiplier was used for all of the demands. The demand multiplier has an average of 1. Each demand can have a different average.

The primary goal is to minimise pumping energy costs while satisfying demands. The cost function and constraints on states and inputs are as follows: 
\begin{subequations} \label{eq: cost function and constraints}
\begin{align}
&\ell(\alpha_{[k|t]_T},u_{k|t}) = \alpha_{[k|t]_T}^\top u_{k|t}+\Delta u_{k|t}^\top W\Delta u_{k|t}, \label{eq: obj function for Richmond}\\
& \underline{x}\leq x_{k|t} \leq \bar{x} , \hspace{0.2 cm} \underline{u}\leq u_{k|t} \leq \bar{u},  \label{eq: constraints in Richmond}
\end{align}
\end{subequations}
where, $\alpha_{[k|t]_T}=\bar{\alpha}_{[k|t]_T}\mathbf{1}_{6 \times 1}^\top$ is a vector of periodically varying electricity prices and $\Delta u_t=u_t-u_{t-1}$. $\bar{\alpha}_{[k|t]_T}$ is shown in Fig. \ref{fig: demand and electricity price Richmond}. The weighting matrix is set to $W=0.1I_{6}$. The first term in \eqref{eq: obj function for Richmond} represents the pumping energy costs, while the second term penalises input variations to prevent sudden changes in the control action. 

The stage cost in \eqref{eq: obj function for Richmond} depends on the previously applied input as $\Delta u_{0|t}=u_{0|t}-u^0_{t-1}$. Hence, the value of $u^0_{t-1}$ is initialised at the first time step in the control problem and then updated recursively. This formulation can be equivalently written as an augmented system, and the theoretical results for linear periodic systems in Proposition \ref{prop: strict dissipativity of lifted}, and Corollaries \ref{cor: T-step EMPC asymptotic convergence} and \ref{coro: asymptotic stability lifted} remain valid. The detailed reformulation is provided in \ref{app: reformulation of WDN}. 

The constraints in \eqref{eq: constraints in Richmond} limit water levels in tanks and admissible pump flows and are given by 
\begin{equation} \label{eq: Values of state and input constraints}
    \begin{split}
        &\bar{x}=[1.011,\hspace{0.1 cm} 1.095,\hspace{0.1 cm} 0.6,\hspace{0.1 cm} 0.633,\hspace{0.1 cm} 0.807,\hspace{0.1 cm}0.657]^\top,\hspace{0.2 cm} \underline{x}=-\bar{x},\\
        &\bar{u}=[50,50,50,50,50,50]^\top,\hspace{0.2 cm} \underline{u}=[0,0,0,0,0,0]^\top.
    \end{split}
\end{equation}
In the state constraints, $0$ means the middle of the tanks.
\begin{figure}[t]
    \centering
    \includegraphics[scale=0.7]{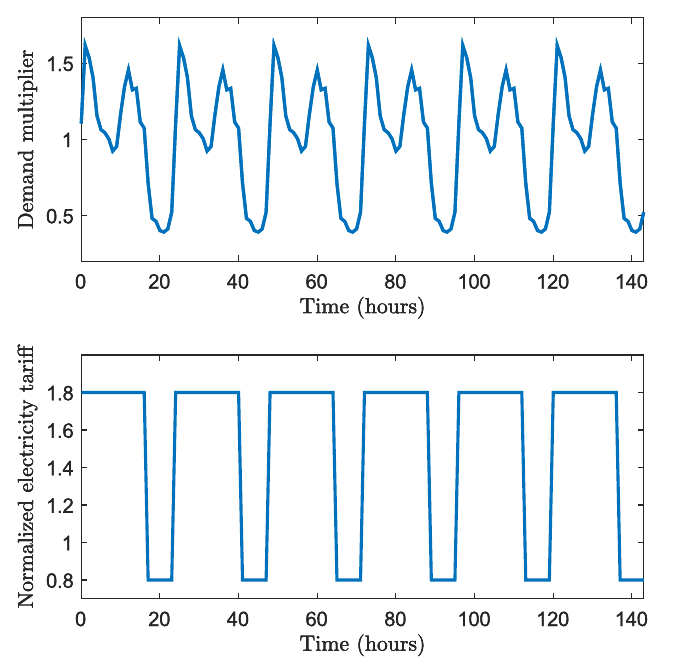}
    \caption{Demand multiplier and electricity tariff}
    \label{fig: demand and electricity price Richmond}
\end{figure}

\subsection{Optimal steady state problem}
The optimal steady state problem given by \eqref{eq: Lifted OPSP} is solved using the model, objective function, and constraints given by \eqref{eq: dynamics for Richmond},\eqref{eq: obj function for Richmond}, and \eqref {eq: constraints in Richmond}, respectively. The stage cost \eqref{eq: obj function for Richmond} is strictly convex with respect to $u$ and constraints are convex; hence, $[u_0^{s\top}, \ldots, u^{s\top}_{T-1}]^\top$ is unique. However, any state satisfying \eqref{eq: dynamics for Richmond} that remains feasible when applying $[u_0^{s\top}, \ldots, u^{s\top}_{T-1}]^\top$ is an optimal steady state trajectory. The solution for the water flow through pump $A$, and the optimal steady state for the water level in tank $A$ are shown in Figs. \ref{fig: us_A trajectory} and \ref{fig: xs_A manifold}, where the optimal steady state solution in Fig. \ref{fig: xs_A manifold} includes an infinite number of trajectories. Thus, the terminal set can be relaxed to the set of all feasible trajectories instead of a single trajectory.  

Next, asymptotic convergence of the closed-loop EMPC cost to the optimal steady state cost is shown using the $T$-step EMPC given in \eqref{eq: T-step EMPC}. 

\begin{figure}[t]
    \centering
    \includegraphics[scale=0.85]{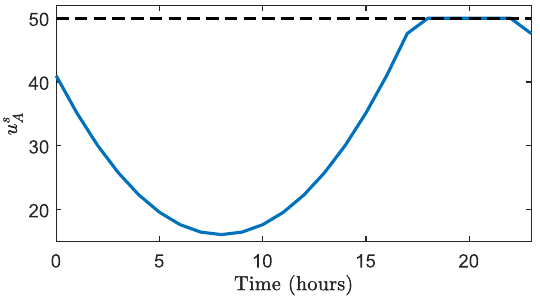}
    \caption{Optimal steady state trajectory for pump $A$}
    \label{fig: us_A trajectory}
\end{figure}
\begin{figure}[t]
    \centering
    \includegraphics[scale=0.65]{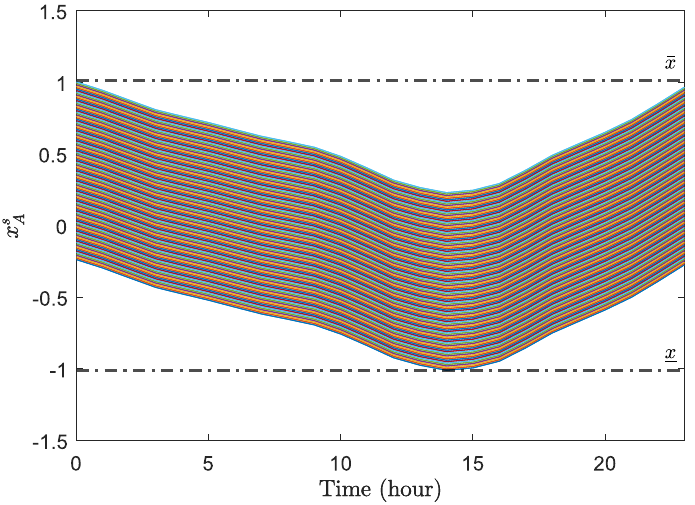}
    \caption{Some optimal steady-state trajectories of the water level in tank $A$, i.e., $x_A^s$. Dashed lines indicate the admissible bounds.}
    \label{fig: xs_A manifold}
\end{figure}

\subsection{Convergence of the closed-loop EMPC cost}

The $T$-step EMPC scheme in \eqref{eq: T-step EMPC} is applied, and the results are given below. The prediction horizon is set to $N=3T$, and the system is simulated for $6$ days ($144$ hours). Simulation results for tank $A$ and pump station $A$ are shown in Figs. \ref{fig: uA EMPC} and \ref{fig: xA EMPC}. The initial lifted state was set to the minimum value, i.e., each component of $\tilde{x}_0$ was chosen as $\underline{x}$. It can be seen from Fig. \ref{fig: xA EMPC} that the state trajectory converged to the minimum optimal steady state in Fig. \ref{fig: xs_A manifold}. This optimal steady state was reachable with less pumping than other trajectories in Fig. \ref{fig: xs_A manifold} in the transient period. The pumping is shown in Fig. \ref{fig: uA EMPC}. The control input reached the optimal input in steady state around time $t=20$ hours, and the closed-loop EMPC cost converged to the optimal cost as shown in Fig. \ref{fig: T-step EMPC cost}.

\begin{figure}[t]
    \centering
    \includegraphics[scale=0.64]{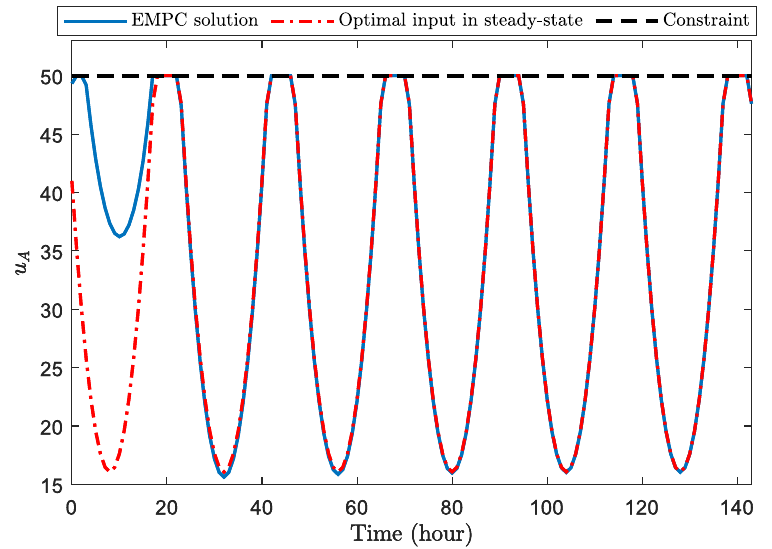}
    \caption{$u_A$ obtained by the $T$-step EMPC}
    \label{fig: uA EMPC}
\end{figure}
\begin{figure}[h]
    \centering
    \includegraphics[scale=0.7]{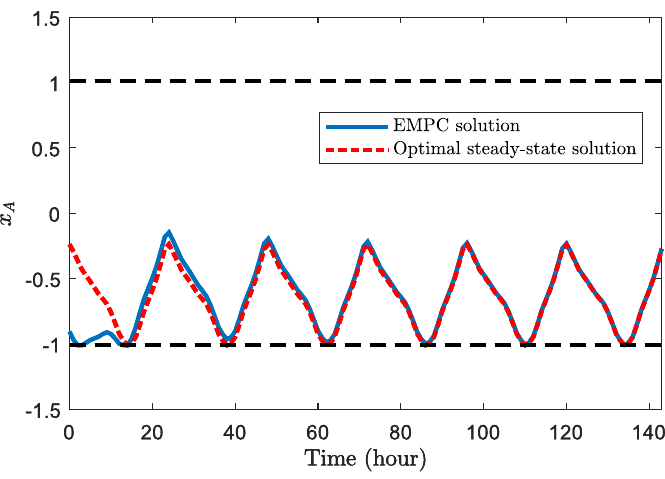}
    \caption{$x_A$ given by the $T$-step EMPC}
    \label{fig: xA EMPC}
\end{figure}
\begin{figure}[h]
    \centering
    \includegraphics[scale=0.73]{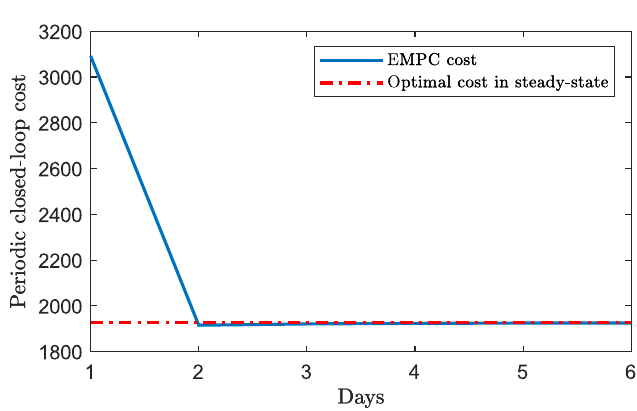}
    \caption{EMPC closed-loop cost at each period}
    \label{fig: T-step EMPC cost}
\end{figure} 
\subsection{Convergence using the modified stage cost}
As proposed in Section \ref{sec: asymptotic stability of modified xs}, the stage cost is modified with a term $\varepsilon \tilde{x}^\top \tilde{x}$ such that it is strictly convex with respect to both $\tilde{x}$ and $\tilde{u}$. 
The first step is to find $\varepsilon$ such that the optimal steady state cost and the open-loop EMPC cost are not changed significantly in comparison to those of the unmodified problems. 

From Lemma \ref{lem: bound for epsilon}, the upper bound on $\varepsilon$ can be computed. In this case study, the state constraint is given by \eqref{eq: constraints in Richmond} and \eqref{eq: Values of state and input constraints}. From \eqref{eq: Values of state and input constraints} and using the lifted state constraint in \eqref{eq: lifted SS state constraint}, $R=\displaystyle \max_{\tilde{x} \in \mathcal{\tilde{X}}} \tilde{x}^\top \tilde{x}=97.55$. Choosing $\varepsilon = 0.00102$ guarantees that the cost difference between optimal values of modified and unmodified steady state problems is less than $\gamma=0.1$. Denote the solutions of the modified steady state problem by $\tilde{x}^s_\varepsilon$ and $\tilde{u}_\varepsilon^s$. The unique solution of the optimal steady state problem for $x_{\varepsilon,A}^s$ and $u_{\varepsilon,A}^s$ is shown in Fig. \ref{fig: xs and us for the modified}. From Fig. \ref{fig: xs and us for the modified}, the optimal steady state is centred around $0$ due to the penalty on the states in the stage cost. \color{black} The optimal steady state cost using the modified stage cost in \eqref{eq: Modified SS equations} is $\bar{J}^s=1925.60$, while the optimal cost for the original problem in \eqref{eq: SS equations} was $J^s=1925.56$. This confirms that the cost difference is smaller than $\gamma=0.1$, as expected. 
\begin{figure}[t]
    \centering
    \includegraphics[scale=0.6]{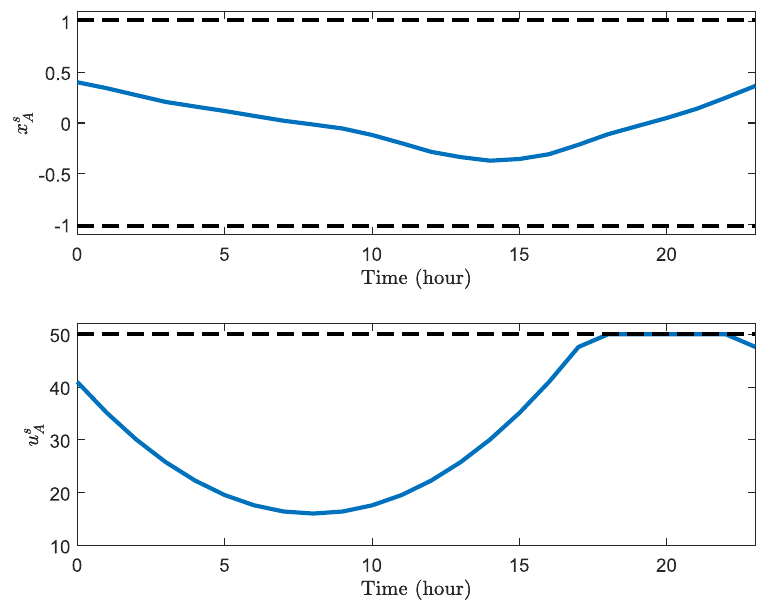}
    \caption{Optimal steady state solutions in the modified problem}
    \label{fig: xs and us for the modified}
\end{figure}

The EMPC in \eqref{eq: T-step EMPC} with a modified stage cost as in \eqref{eq: Modified objective function in control problem}, the same initial state  $\tilde{x}_0$ as in the unmodified problem in the previous section, and the terminal constraint $\tilde{x}_{K|t}=\tilde{x}_\varepsilon^s$ is used \color{black}, and the results for $x_A$ and $u_A$ are given in Figs. \ref{fig: xA and uA modified EMPC} and \ref{fig: EMPC cost of modified}. 
\begin{figure}[t]
    \centering
    \includegraphics[scale=0.65]{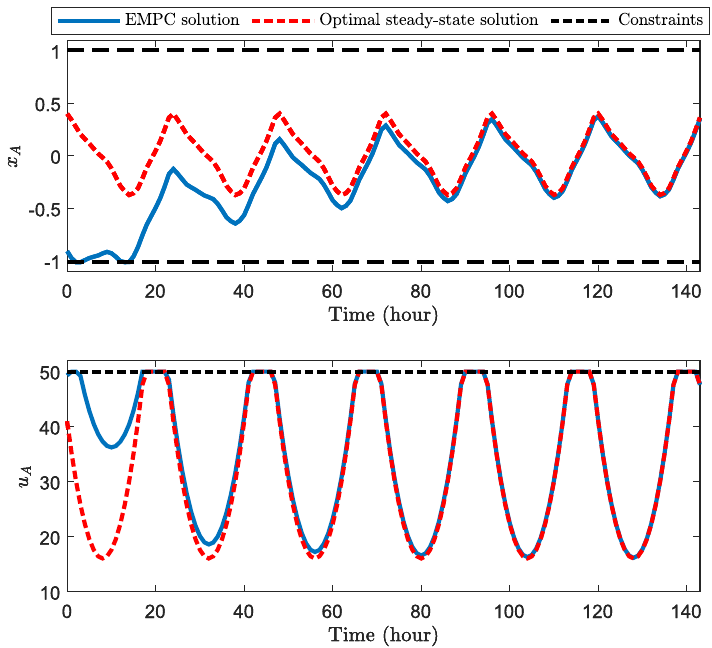}
    \caption{$x_A$ and $u_A$ obtained by the modified EMPC scheme}
    \label{fig: xA and uA modified EMPC}
\end{figure}
\begin{figure}[t]
    \centering
    \includegraphics[scale=0.65]{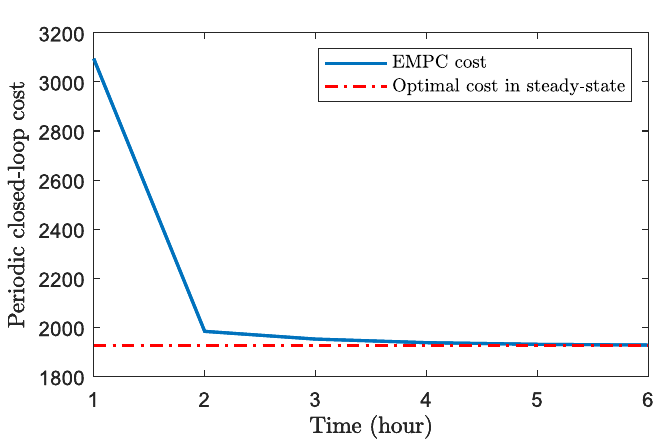}
    \caption{Periodic closed-loop EMPC cost using the modified stage cost}
    \label{fig: EMPC cost of modified}
\end{figure}

Comparing Figs. \ref{fig: uA EMPC}, \ref{fig: xA EMPC}, and \ref{fig: xA and uA modified EMPC}, more pumping was required during the transient period in Fig. \ref{fig: xA and uA modified EMPC} so that the state converges to the unique steady state trajectory centred around $0$. This is shown in Fig. \ref{fig: EMPC cost of modified} where the transient periodic cost is higher than in Fig. \ref{fig: T-step EMPC cost}. However, both costs converge to the corresponding optimal costs in steady state, which are almost the same because of the small value of $\gamma$. 
\color{black}

\section{Conclusion} \label{sec: conclusion}

EMPC schemes with purely economic objectives where the stage cost depends only on the control input have been analysed. The main contribution is that, under a strict dissipativity assumption with respect to the set of optimal steady states, it was shown that closed-loop trajectories asymptotically converge to the set of optimal steady states, thereby guaranteeing convergence of the economic cost. Since Lyapunov stability is not generally achieved in this setting, a modification of the stage cost was introduced that enabled asymptotic stability to a specific equilibrium while keeping the optimal steady state cost arbitrarily close to the cost in the original problem. The extension to linear periodic systems showed that the proposed framework naturally accommodates periodic costs and disturbances, with strict dissipativity established through strong duality. Application to a benchmark water distribution network validated the theoretical findings. 
\bibliographystyle{plainnat}       
\bibliography{References}           

@article{angeli2011average,
  title={On Average Performance and Stability of Economic Model Predictive Control},
  author={Angeli, David and Amrit, Rishi and Rawlings, James B},
  journal={IEEE Transactions on Automatic Control},
  volume={57},
  number={7},
  pages={1615--1626},
  year={2011},
  publisher={IEEE}
}

@book{rawlings2017model,
  title={Model Predictive Control: Theory, Computation, and Design},
  author={Rawlings, James Blake and Mayne, David Q and Diehl, Moritz and others},
  volume={2},
  year={2017},
  publisher={Nob Hill Publishing Madison, WI}
}

@book{Khalil:1173048,
      author        = "Khalil, Hassan K",
      title         = "{Nonlinear Systems; 3rd ed.}",
      publisher     = "Prentice-Hall",
      address       = "Upper Saddle River, NJ",
      year          = "2002",
}

@article{amrit2011economic,
  title={Economic Optimization Using Model Predictive Control with a Terminal Cost},
  author={Amrit, Rishi and Rawlings, James B and Angeli, David},
  journal={Annual Reviews in Control},
  volume={35},
  number={2},
  pages={178--186},
  year={2011},
  publisher={Elsevier}
}

@article{van2004operational,
  title={Operational Optimization of Water Distribution Systems Using a Hybrid Genetic Algorithm},
  author={Van Zyl, Jakobus E and Savic, Dragan A and Walters, Godfrey A},
  journal={Journal of Water Resources Planning and Management},
  volume={130},
  number={2},
  pages={160--170},
  year={2004},
  publisher={American Society of Civil Engineers}
}

@misc{exeter,
  author       = {{Centre for Water Systems, University of Exeter}},
  title        = {Richmond Skeleton Water Distribution Network},
  year         = {2001},
  note         = {Benchmark water distribution network, Yorkshire Water, UK},
  howpublished = {\url{https://www.exeter.ac.uk/research/centres/cws/resources/benchmarks/}}
}

@article{muller2016economic,
  title={Economic Model Predictive Control without Terminal Constraints for Optimal Periodic Behavior},
  author={M{\"u}ller, Matthias A and Gr{\"u}ne, Lars},
  journal={Automatica},
  volume={70},
  pages={128--139},
  year={2016},
  publisher={Elsevier}
}

@article{kohler2020periodic,
  title={Periodic Optimal Control of Nonlinear Constrained Systems Using Economic Model Predictive Control},
  author={K{\"o}hler, Johannes and M{\"u}ller, Matthias A and Allg{\"o}wer, Frank},
  journal={Journal of Process Control},
  volume={92},
  pages={185--201},
  year={2020},
  publisher={Elsevier}
}

@article{kohler2018periodic,
  title={On Periodic Dissipativity Notions in Economic Model Predictive Control},
  author={K{\"o}hler, Johannes and M{\"u}ller, Matthias A and Allg{\"o}wer, Frank},
  journal={IEEE Control Systems Letters},
  volume={2},
  number={3},
  pages={501--506},
  year={2018},
  publisher={IEEE}
}

@book{boyd2004convex,
  title={Convex Optimization},
  author={Boyd, Stephen and Vandenberghe, Lieven},
  year={2004},
  publisher={Cambridge University Press}
}

@article{arastou2025optimization,
  title={Optimization-Based Network Partitioning for Distributed and Decentralized Control},
  author={Arastou, Alireza and Wang, Ye and Weyer, Erik},
  journal={Journal of Process Control},
  volume={146},
  pages={103357},
  year={2025},
  publisher={Elsevier}
}

@article{ellis2017economic,
 title={Economic Model Predictive Control: Theory, Formulations and Chemical Process Applications},
  author={Ellis, Matthew and Liu, Jinfeng and Christofides, Panagiotis D.},
  year={2017},
  publisher={Springer},
}

@article{hu2023economic,
  title={Economic Model Predictive Control for Microgrid Optimization: A Review},
  author={Hu, Jiefeng and Shan, Yinghao and Yang, Yong and Parisio, Alessandra and Li, Yong and Amjady, Nima and Islam, Syed and Cheng, Ka Wai and Guerrero, Josep M and Rodr{\'\i}guez, Jos{\'e}},
  journal={IEEE Transactions on Smart Grid},
  volume={15},
  number={1},
  pages={472--484},
  year={2023},
  publisher={IEEE}
}

@inproceedings{wang2016periodic,
  title={Periodic Economic Model Predictive Control With Nonlinear-Constraint Relaxation for Water Distribution Networks},
  author={Wang, Ye and Alamo, Teodoro and Puig, Vicen{\c{c}} and Cembrano, Gabriela},
  booktitle={2016 IEEE Conference on Control Applications (CCA)},
  pages={1167--1172},
  year={2016},
  organization={IEEE}
}

@article{he2024dynamic,
  title={Dynamic Negotiation-Based Distributed {EMPC} With Varying Consensus Speeds of Heterogeneous Electric Vehicle Platoons},
  author={He, Defeng and Luo, Jie and Du, Haiping},
  journal={IEEE Transactions on Control Systems Technology},
  volume={32},
  number={4},
  pages={1495--1503},
  year={2024},
  publisher={IEEE}
}

@ARTICLE{Diehl2011,
  author={Diehl, Moritz and Amrit, Rishi and Rawlings, James B.},
  journal={IEEE Transactions on Automatic Control}, 
  title={A Lyapunov Function for Economic Optimizing Model Predictive Control}, 
  year={2011},
  volume={56},
  number={3},
  pages={703-707},
  doi={10.1109/TAC.2010.2101291}}

@ARTICLE{Grune_Dissipativity,
  author={Grüne, Lars},
  journal={IEEE Control Systems Magazine}, 
  title={Dissipativity and Optimal Control: Examining the Turnpike Phenomenon}, 
  year={2022},
  volume={42},
  number={2},
  pages={74-87},
  doi={10.1109/MCS.2021.3139724}}

@article{grune2016relation,
  title={On the Relation Between Strict Dissipativity and Turnpike Properties},
  author={Gr{\"u}ne, Lars and M{\"u}ller, Matthias A},
  journal={Systems \& control letters},
  volume={90},
  pages={45--53},
  year={2016},
  publisher={Elsevier}
}

@article{GRUNE20141187,
title={Asymptotic Stability and Transient Optimality of Economic {MPC} Without Terminal Conditions},
journal = {Journal of Process Control},
volume = {24},
number = {8},
pages = {1187-1196},
year = {2014},
note = {Economic nonlinear model predictive control},
issn = {0959-1524},
doi = {https://doi.org/10.1016/j.jprocont.2014.05.003},
author = {Lars Grüne and Marleen Stieler}
}

@article{MULLER_Average,
title={Convergence in Economic Model Predictive Control With Average Constraints},
journal = {Automatica},
volume = {50},
number = {12},
pages = {3100-3111},
year = {2014},
issn = {0005-1098},
doi = {https://doi.org/10.1016/j.automatica.2014.10.059},
author = {Matthias A. Müller and David Angeli and Frank Allgöwer and Rishi Amrit and James B. Rawlings}
}

@article{MULLER_Transient_Average,
title={Transient Average Constraints in Economic Model Predictive Control},
journal = {Automatica},
volume = {50},
number = {11},
pages = {2943-2950},
year = {2014},
issn = {0005-1098},
doi = {https://doi.org/10.1016/j.automatica.2014.10.024},
url = {https://www.sciencedirect.com/science/article/pii/S0005109814004051},
author = {Matthias A. Müller and David Angeli and Frank Allgöwer}
}

@article{zanon2022new,
  title={A New Dissipativity Condition for Asymptotic Stability of Discounted Economic {MPC}},
  author={Zanon, Mario and Gros, S{\'e}bastien},
  journal={Automatica},
  volume={141},
  pages={110287},
  year={2022},
  publisher={Elsevier}
}

@article{zanon2016periodic,
  title={Periodic Optimal Control, Dissipativity and {MPC}},
  author={Zanon, Mario and Gr{\"u}ne, Lars and Diehl, Moritz},
  journal={IEEE Transactions on Automatic Control},
  volume={62},
  number={6},
  pages={2943--2949},
  year={2016},
  publisher={IEEE}
}

@article{ocampo2012hierarchical,
  title={Hierarchical and Decentralised Model Predictive Control of Drinking Water Networks: Application to {Barcelona} Case Study},
  author={Ocampo-Martinez, Carlos and Barcelli, Davide and Puig, Vicen{\c{c}} and Bemporad, Alberto},
  journal={IET control theory \& applications},
  volume={6},
  number={1},
  pages={62--71},
  year={2012},
  publisher={IET}
}

\appendix
\renewcommand{\thesection}{Appendix \Alph{section}}
\renewcommand{\theequation}{\Alph{section}.\arabic{equation}}
\setcounter{equation}{0}
\section{Lifted Dynamical System} \label{app: Lifted system}

Iterating the dynamical system in \eqref{eq: Model dynamics for linear periodic} over $T$ steps yields
\begin{equation}
\begin{split}
x_{kT+i} =&A^{i}x_{kT}
+ \sum_{j=0}^{i-1} A^{i-1-j}B\,u_{kT+j} \\
&+ \sum_{j=0}^{i-1} A^{i-1-j}B_d\,d_j,
\quad i=1,\dots,T.
\end{split} \nonumber
\end{equation}
Stacking these $T$ state equations yields
\begin{equation} \nonumber
\scalebox{0.9}{$
\begin{aligned}
&\begin{bmatrix}
x_{kT+1}\\
x_{kT+2}\\
\vdots\\
x_{(k+1)T}
\end{bmatrix}
=
\underbrace{\begin{bmatrix}
0 & 0 & \cdots & 0 & A\\
0 & 0 & \cdots & 0 & A^2\\
\vdots &  & \ddots &\vdots  & \vdots\\
0 & 0 & \cdots & 0 & A^T
\end{bmatrix}}_{\tilde{A}}
\begin{bmatrix}
x_{(k-1)T+1}\\
x_{(k-1)T+2}\\
\vdots\\
x_{kT}
\end{bmatrix} \\[1em]
&+
\underbrace{\begin{bmatrix}
B & 0 & \cdots & 0\\
A B & B & \cdots & 0\\
\vdots & \ddots & \ddots & \vdots\\
A^{T-1}B & \cdots & A B & B
\end{bmatrix}}_{\tilde{B}_u}
\tilde{u}_{k}
+
\underbrace{\begin{bmatrix}
B_d & 0 & \cdots & 0\\
A B_d & B_d & \cdots & 0\\
\vdots & \ddots & \ddots & \vdots\\
A^{T-1}B_d & \cdots & A B_d & B_d
\end{bmatrix}}_{\tilde{B}_d}
\tilde{d}
\end{aligned}
$}
\end{equation}
It can be written as
\[
\tilde{x}_{k+1}
= \tilde{A}\tilde{x}_k
+ \tilde{B}_u\tilde{u}_k
+ \tilde{B}_d\tilde{d}.
\]
Since $\tilde{d}$ is periodic and constant over all periods, the lifted model is time-invariant.

\section{Reformulation of the Periodic EMPC Problem with an Input-Change Cost} \label{app: reformulation of WDN}

In practice, the economic objective may also include penalties for changes in the control input. This is common in WDNs, where large variations in pump flows are undesirable \citep{ocampo2012hierarchical,arastou2025optimization}. In these cases, the stage cost includes a term of the form 
\begin{equation} \label{eq: changes in the input}
    \Delta u_t = u_t - u_{t-1}.
\end{equation}
This introduces a dependence on the previously applied input;  hence, $u_{t-1}$ should be fed into the EMPC scheme at time $t$. The formulations in the main part of the paper depend only on the current decision variable. 

To address this, the objective can be rewritten by augmenting the system state with the previously applied input. With this representation, the EMPC problem has a structure similar to the periodic formulation in Section \ref{sec: application to periodic}, and the theories developed there can be applied in a similar way.


The stage cost is given by
\begin{equation} \label{eq: Cost WDN for reformulation}
\ell(\tilde{\alpha},\tilde{u}_{t},\tilde{u}_{t-1}) = \tilde{\alpha}^\top \tilde{u}_{t}+\Delta \tilde{u}_{t}^\top \tilde{W}\Delta \tilde{u}_{t}, \hspace{0.1 cm} \tilde{W}\succ 0
\end{equation}
For the lifted system given by \eqref{eq:lifted_dynamics}, the input difference term is given by
\begin{equation} \label{eq: delta_u_tilde_definition }
    \Delta \tilde{u}_{t}=
     \bar{M}\tilde{u}_t-\bar{N}u_{tT-1},
\end{equation}
where $\tilde{u}_{t}=\begin{bmatrix} 
    u_{tT}^\top &u_{tT+1}^\top & \cdots& u_{(t+1)T-1}^\top 
    \end{bmatrix}^\top$, and 
\begin{equation} \label{eq: definition of M and N bar}
\bar{M}_{mT \times mT}=
\begin{bmatrix}
I & 0 & 0 & \cdots & 0 \\
-I & I & 0 & \cdots & 0 \\
0 & -I & I & \cdots & 0 \\
\vdots & \ddots & \ddots & \ddots & \vdots \\
0 & \cdots & 0 & -I & I
\end{bmatrix}, \hspace{0.2 cm}
\bar{N}_{mT \times 1}=
\begin{bmatrix}
I  \\
0  \\
0  \\
\vdots \\
0 
\end{bmatrix},
\end{equation}
and $u_{tT-1}$ is the last element of the applied input sequence to the system, i.e., 
\begin{equation} \label{eq: utT-1 definition}
    u_{tT-1}=
     E\tilde{u}_{t-1},
\end{equation}
where $ E_{m \times mT}=  \begin{bmatrix}
        0 &0 &\ldots &0 &I
     \end{bmatrix}$.
Let $\hat{x}_t$ be $\tilde{x}_t$ augmented with $v_t=u_{tT-1}$
\begin{equation} 
    \hat{x}_t=\begin{bmatrix}
        \tilde{x}_t\\v_t
    \end{bmatrix},
\end{equation}
and state space representation 
\begin{equation} \label{eq: augmented model dynamics WDN}
    \hat{x}_{t+1}=\hat{A}\hat{x}_t+\hat{B}_u \tilde{u}_t+\hat{B}_d \tilde{d},
\end{equation}
where 
\begin{equation} \nonumber
    \hat{A}=\begin{bmatrix}
        \tilde{A} &0\\
        0 &0
    \end{bmatrix}, \hspace{0.1 cm}\hat{B}=
    \begin{bmatrix}
    \tilde{B}_u\\E    
    \end{bmatrix}, \hspace{0.1 cm} \hat{B}_d=
    \begin{bmatrix}
        \tilde{B}_d\\0
    \end{bmatrix},
\end{equation}

The stage cost in \eqref{eq: Cost WDN for reformulation} can be expressed as
\begin{equation} \label{eq: Reformulated stage cost with delta u}
    \hat{\ell}(\tilde{\alpha},\hat{x}_t,\tilde{u}_t)=\tilde{\alpha}^\top \tilde{u}_{t}+(\bar{M}\tilde{u}_t-\hat{N}\hat{x}_t)^\top \tilde{W} (\bar{M}\tilde{u}_t-\hat{N}\hat{x}_t),
\end{equation}
where 
\[
\hat{N}_{1 \times (n+m)T}=\begin{bmatrix}
    0_{1 \times nT} & \bar{N}^\top
\end{bmatrix}.
\]

The augmented state constraint is also given by
\begin{equation}\label{eq: augmented constraint set}
\hat{\mathcal X}
=
\left\{
\hat{x} =
\begin{bmatrix}
\tilde{x}^\top
&v^\top
\end{bmatrix} ^\top
\;\middle|\;
\tilde{x} \in \tilde{\mathcal X}, \; v \in \mathcal U
\right\}.
\end{equation}

Accordingly, the optimal periodic steady-state problem for the augmented system is given by
\begin{subequations}\label{eq: augmented steady state problem}
\begin{align}
\min_{\hat{x},\tilde{u}} \quad & \hat{\ell}(\tilde{\alpha},\hat{x},\tilde{u}) \label{eq: augmented steady state problem a}\\
\text{s.t.}\quad
& \hat{x}=\hat{A}\hat{x}+\hat{B}_u\tilde{u}+\hat{B}_d\tilde{d}, \label{eq: augmented steady state problem b}\\
& \hat{x}\in \hat{\mathcal X}, \quad \tilde{u}\in \tilde{\mathcal U}, \label{eq: augmented steady state problem c}
\end{align}
\end{subequations}
where $\hat{x}=[\tilde{x}^\top \ v^\top]^\top$ and $v=E\tilde{u}$ at steady state. Denote an optimal solution by $(\hat{x}^s,\tilde{u}^s)$ and the corresponding optimal value by $\hat{\ell}^s$. Define the optimal steady-state set of the augmented system by
\begin{equation}\label{eq: augmented steady state set}
\begin{aligned}
\hat{\mathcal Z}^s =
\Big\{
(\hat{x},\tilde{u}) \in \hat{\mathcal X} \times \tilde{\mathcal U}
\;\Big|\;
& \hat{x} = \hat{A}\hat{x} + \hat{B}_u \tilde{u} + \hat{B}_d \tilde{d}, \\
& \hat{\ell}(\tilde{\alpha}, \hat{x}, \tilde{u}) = \hat{\ell}^s
\Big\}
\end{aligned}
\end{equation}
and its projection onto the augmented state space by
\begin{equation}\label{eq: augmented Xs}
\hat{\mathcal X}^s=
\left\{
\hat{x}\in \hat{\mathcal X}
\;\middle|\;
\exists \tilde{u}\in \tilde{\mathcal U}\text{ such that }(\hat{x},\tilde{u})\in \hat{\mathcal Z}^s
\right\}.
\end{equation}

\begin{remark}
If Assumption \ref{ass: slater condition} holds for \eqref{eq: Lifted OPSP}, then it also holds for \eqref{eq: augmented steady state problem}. Any feasible point $(\tilde{x},\tilde{u})$ for \eqref{eq: Lifted OPSP} is a feasible point $(\hat{x},\tilde{u})$ for \eqref{eq: augmented steady state problem} by defining $\hat{x}=[\tilde{x}^\top,E\tilde{u}^\top]^\top$, and the augmentation adds equality constraints, which do not affect strong duality.
\end{remark}

The corresponding T-step EMPC problem for the augmented system is
\begin{subequations}\label{eq: augmented TEMPC}
\begin{align}
\min_{\tilde{u}_{0|t},\ldots,\tilde{u}_{K-1|t}} \quad &
\sum_{i=0}^{K-1}\hat{\ell}(\tilde{\alpha},\hat{x}_{i|t},\tilde{u}_{i|t}) \label{eq: augmented TEMPC a}\\
\text{subject to}\quad &i=0,\ldots,K-1\\
& \hat{x}_{0|t}=\hat{x}_t, \label{eq: augmented TEMPC b}\\
& \hat{x}_{i+1|t}=\hat{A}\hat{x}_{i|t}+\hat{B}_u\tilde{u}_{i|t}+\hat{B}_d\tilde{d}, \label{eq: augmented TEMPC c}\\
& \hat{x}_{i|t}\in \hat{\mathcal X}, \quad \tilde{u}_{i|t}\in \tilde{\mathcal U}, \label{eq: augmented TEMPC d}\\
& \hat{x}_{K|t}\in \hat{\mathcal X}^s. \label{eq: augmented TEMPC e}
\end{align}
\end{subequations}
The augmented initial state is given by
\begin{equation}\label{eq: augmented initial state repeated}
\hat{x}_t=
\begin{bmatrix}
\tilde{x}_t\\
E\tilde{u}_{t-1}
\end{bmatrix},
\end{equation}
where $E\tilde{u}_{t-1}=u_{tT-1}$ is the last applied input from the previous period. The set of admissible states is defined as 
\begin{equation} \label{eq: Region of attraction for augmented TEMPC}
\hat{\mathcal{X}}_K =\left\{ \hat{x}_0 \in \hat{\mathcal X} \;\middle|\; \eqref{eq: augmented TEMPC} \text{ is feasible at } \hat{x}_0
\right\}.
\end{equation}

\begin{proposition}\label{prop: augmented strict dissipativity}
Let Assumption \ref{ass: slater condition} hold. Then, the augmented system is strictly dissipative with the supply rate
$\hat{\ell}(\tilde{\alpha},\hat{x},\tilde{u})-\hat{\ell}^s$
with respect to the set $\hat{\mathcal Z}^s$.
\end{proposition}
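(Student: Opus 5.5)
The plan is to follow the proof of Proposition \ref{prop: strict dissipativity of lifted}: build the storage function from an optimal Lagrange multiplier of the augmented steady-state problem \eqref{eq: augmented steady state problem}, and then use Lemma \ref{lem:lower-bounding-k-infinity} to obtain a strict lower bound. First, problem \eqref{eq: augmented steady state problem} is convex. The dynamics \eqref{eq: augmented steady state problem b} are affine, and $\hat{\mathcal X}\times\tilde{\mathcal U}$ is convex. The cost \eqref{eq: Reformulated stage cost with delta u} is convex in $(\hat{x},\tilde u)$, since it is a linear term plus a quadratic form with $\tilde W\succ0$ composed with the affine map $(\hat{x},\tilde u)\mapsto \bar M\tilde u-\hat N\hat x$. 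By the remark preceding the statement, strong duality carries over from Assumption \ref{ass: slater condition} to \eqref{eq: augmented steady state problem}. Hence there is a multiplier $\hat\mu^*$ for the equality \eqref{eq: augmented steady state problem b} with
\begin{equation*}
\hat{\mathcal L}(\hat x,\tilde u):=\hat\ell(\tilde\alpha,\hat x,\tilde u)-\hat\ell^s+\hat\mu^{*\top}\big(\hat x-\hat A\hat x-\hat B_u\tilde u-\hat B_d\tilde d\big)\ge 0
\end{equation*}
for all $(\hat x,\tilde u)\in\hat{\mathcal X}\times\tilde{\mathcal U}$, with equality at every optimal steady-state pair. I would take $\hat\lambda(\hat x)=\hat\mu^{*\top}\hat x$. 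Substituting $\hat x^+=\hat A\hat x+\hat B_u\tilde u+\hat B_d\tilde d$ shows that $\hat\lambda(\hat x)-\hat\lambda(\hat x^+)+\hat\ell-\hat\ell^s=\hat{\mathcal L}(\hat x,\tilde u)$. Thus the rotated cost is nonnegative, which gives (non-strict) dissipativity.

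Second, I would show that $\hat{\mathcal L}$ vanishes only on $\hat{\mathcal Z}^s$. If $\hat{\mathcal L}(\hat x,\tilde u)=0$, then $(\hat x,\tilde u)$ minimizes the Lagrangian over $\hat{\mathcal X}\times\tilde{\mathcal U}$, with value equal to the dual optimum $\hat\ell^s$. This step is the main obstacle, because a Lagrangian minimizer need not be primal feasible: it may violate the steady-state equality. In the proof of Proposition \ref{prop: strict dissipativity of lifted} this point is simply asserted ("easy to check"). I would first try to use the structure of the problem. The $v$-component of \eqref{eq: augmented steady state problem b} reads $v=E\tilde u$, and the cost is strictly convex in $\tilde u$ for fixed $\hat x$, because $\bar M$ is invertible and $\tilde W\succ0$. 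So the $\tilde u$-part of any Lagrangian minimizer is uniquely determined by $\hat x$.

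Using this, I would argue that the zero set of $\hat{\mathcal L}$ is a convex face on which the optimal input is pinned down. I would then appeal to complementary-slackness and uniqueness arguments to conclude that $\tilde u=\tilde u^s$ and $\hat x$ is a steady state, i.e.\ $(\hat x,\tilde u)\in\hat{\mathcal Z}^s$. If this route does not close, the fallback is to add strict feasibility or uniqueness of the multiplier as a standing interpretation of Assumption \ref{ass: slater condition}, as is implicitly done in Proposition \ref{prop: strict dissipativity of lifted}, and to state explicitly where that assumption is used.

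Third, $\hat{\mathcal L}$ is continuous and nonnegative on the compact set $\hat{\mathcal X}\times\tilde{\mathcal U}$ (Assumption \ref{ass: compact and convex constraint}), and it vanishes exactly on the closed set $\hat{\mathcal Z}^s$. I would apply Lemma \ref{lem:lower-bounding-k-infinity} with the origin replaced by the set $\hat{\mathcal Z}^s$ and the norm replaced by the distance $\|\cdot\|_{\hat{\mathcal Z}^s}$. The compactness-based proof goes through verbatim: take $\beta(r)$ to be the minimum of $\hat{\mathcal L}$ over pairs at distance at least $r$ from $\hat{\mathcal Z}^s$, and then minorize it by a class-$\mathcal K$ function. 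This yields $\hat{\mathcal L}(\hat x,\tilde u)\ge\beta(\|(\hat x,\tilde u)\|_{\hat{\mathcal Z}^s})$, which is exactly Assumption \ref{Ass: Strict dissipativity} for the augmented system with storage function $\hat\lambda$.
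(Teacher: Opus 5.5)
Your outline is the paper's proof step for step: the storage $\hat\lambda(\hat x)=\hat\mu^{\star\top}\hat x$ comes from an optimal multiplier, strong duality gives a nonnegative rotated cost, its zero set is identified with $\hat{\mathcal Z}^s$, and then the class-$\mathcal K$ lemma is applied. You also located the crux correctly. But your Step~2 does not close, and it cannot be closed, because for this storage function the claim is false in general. The paper's proof merely asserts that the rotated cost vanishes if and only if $(\hat x,\tilde u)\in\hat{\mathcal Z}^s$, so it has the same hole.

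Here is a minimal counterexample. Take $T=1$, $n=m=1$, $A=B=1$, $B_d\tilde d=0$, $\mathcal X=\mathcal U=[-1,1]$ and $\hat\ell=\alpha u+w(u-v)^2$ with $w>0$. The augmented dynamics are $x^+=x+u$, $v^+=u$. Steady states force $u=v=0$, so $\hat\ell^s=0$ and $\hat{\mathcal Z}^s=\{((x,0),0):x\in[-1,1]\}$. The optimal $(v,u)=(0,0)$ is interior, so stationarity of the Lagrangian forces the unique multiplier $(\mu_x,\mu_v)=(\alpha,0)$. With $\hat\lambda(x,v)=\alpha x$,
\[
\hat\lambda(\hat x)-\hat\lambda(\hat x^+)+\hat\ell-\hat\ell^s=\alpha x-\alpha(x+u)+\alpha u+w(u-v)^2=w(u-v)^2 .
\]
This vanishes at every $((x,c),c)$ with $c\neq 0$, which lies at distance $\sqrt2\,|c|$ from $\hat{\mathcal Z}^s$. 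So no positive definite $\hat\rho(\|\cdot\|_{\hat{\mathcal Z}^s})$ can bound it from below.

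The same example shows why your proposed repairs fail. Strict convexity in $\tilde u$ for fixed $\hat x$ only pins $\tilde u$ as a function of $v$ (here $u=v$), not to $\tilde u^s$. Complementary slackness gives nothing, since no constraint is active. The fallback does not help either: Slater's condition holds and the multiplier is unique.

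There is also a second, independent failure for $T>1$. The cost does not depend on $\tilde x$. So if some optimal steady state lies in the interior of $\tilde{\mathcal X}$, the saddle-point property forces $(I-\tilde A)^\top\hat\mu^\star_{\tilde x}=0$. The rotated cost is then independent of $\tilde x$ and vanishes at $(\tilde x,E\tilde u^s,\tilde u^s)$ for every $\tilde x\in\tilde{\mathcal X}$. This includes lifted states whose first $T-1$ blocks violate $\tilde x=\tilde A\tilde x+\tilde B_u\tilde u^s+\tilde B_d\tilde d$.

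So what is missing is not a sharper argument. You need either an extra hypothesis guaranteeing that every minimizer of the Lagrangian over $\hat{\mathcal X}\times\tilde{\mathcal U}$ is primal feasible, or a genuinely nonlinear storage function; strong duality supplies neither. In the $\varepsilon$-modified problem the Lagrangian is strictly convex, its zero set is the single point $(\hat x^s_\varepsilon,\tilde u^s_\varepsilon)$, and the multiplier argument does work. That, however, targets a point rather than $\hat{\mathcal Z}^s$.
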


\noindent\textbf{Proof.}
The stage cost in \eqref{eq: Reformulated stage cost with delta u} is convex since the first term
$\tilde{\alpha}^\top \tilde{u}$ is linear in $(\hat{x},\tilde{u})$ and the second term
$(\bar{M}\tilde{u}-\hat{N}\hat{x})^\top \tilde{W}(\bar{M}\tilde{u}-\hat{N}\hat{x})$ is a
composition of a convex function with an affine function.
The augmented steady-state problem in
\eqref{eq: augmented steady state problem} is therefore a convex optimisation problem and under Assumption \ref{ass: slater condition}, strong duality applies.

Let $\hat{\mu}^\star$ be an optimal Lagrange multiplier associated with \eqref{eq: augmented steady state problem b}, and define the storage function $\hat{\lambda}(\hat{x})=(\hat{\mu}^\star)^\top \hat{x}.$ Then, for any feasible $(\hat{x},\tilde{u})\in \hat{\mathcal X}\times \tilde{\mathcal U}$,
\begin{equation} \nonumber
\hat{\ell}(\tilde{\alpha},\hat{x},\tilde{u})-\hat{\ell}^s
+(\hat{\mu}^\star)^\top\bigl(\hat{x}-\hat{A}\hat{x}-\hat{B}_u\tilde{u}-\hat{B}_d\tilde{d}\bigr) \geq 0.
\end{equation}
The above inequality is equal to zero if and only if
$(\hat{x},\tilde{u})\in \hat{\mathcal Z}^s$. Hence, by the same argument as in Proposition \ref{prop: strict dissipativity of lifted}, there exists a positive definite function such that
\[
\hat{\lambda}(\hat{x})
-\hat{\lambda}(\hat{A}\hat{x}+\hat{B}_u\tilde{u}+\hat{B}_d\tilde{d})
+\hat{\ell}(\tilde{\alpha},\hat{x},\tilde{u})-\hat{\ell}^s
\ge
\hat{\rho}\bigl(\|(\hat{x},\tilde{u})\|_{\hat{\mathcal Z}^s}\bigr),
\]
\hfill $\square$

\begin{corollary}\label{cor: augmented feasibility convergence}
The augmented EMPC problem in \eqref{eq: augmented TEMPC} is recursively feasible. Moreover, the closed-loop solution converges asymptotically to $\hat{\mathcal Z}^s$. Consequently,
$\hat{\ell}(\tilde{\alpha},\hat{x}_t,\tilde{u}_{0|t}) \to \hat{\ell}^s$ as $t\to\infty.$
\end{corollary}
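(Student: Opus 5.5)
The plan is to reduce the corollary to Proposition \ref{Prop: Recursive feasibility centralised} and Theorem \ref{thm: Asymptotic convergence}. We regard the augmented system \eqref{eq: augmented model dynamics WDN} as an instance of the time-invariant setting of Section \ref{sec: performance and stability analysis}. Two differences have to be handled. First, the stage cost $\hat{\ell}$ depends on both $\hat{x}$ and $\tilde{u}$, not on the input alone. Second, the terminal set is $\hat{\mathcal X}^s$. Strict convexity of $\ell(u)$ was used only to obtain uniqueness of $u^s$. It is not used in the convergence argument, which relies only on the rotated cost, the shifted candidate, and Assumption \ref{Ass: Strict dissipativity}. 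The obligations are therefore to check the continuity and compactness hypotheses, feasibility of the shifted candidate, and strict dissipativity; the last is supplied by Proposition \ref{prop: augmented strict dissipativity}.

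\emph{Recursive feasibility.} Take an optimal solution of \eqref{eq: augmented TEMPC} at time $t$ with predicted states $\hat{x}^0_{0|t},\ldots,\hat{x}^0_{K|t}$. Since $\hat{x}^0_{K|t}\in\hat{\mathcal X}^s$, there is $\tilde{u}$ with $(\hat{x}^0_{K|t},\tilde{u})\in\hat{\mathcal Z}^s$. Because $\hat{x}^0_{K|t}=\hat{A}\hat{x}^0_{K|t}+\hat{B}_u\tilde{u}+\hat{B}_d\tilde{d}$, applying $\tilde{u}$ keeps the state fixed. The shifted sequence $\{\tilde{u}^0_{1|t},\ldots,\tilde{u}^0_{K-1|t},\tilde{u}\}$ is therefore feasible at $t+1$: the dynamics and the constraints $\hat{\mathcal X}$, $\tilde{\mathcal U}$ hold for the inherited part, and the terminal state stays in $\hat{\mathcal X}^s$. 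The one subtlety is the $v$-component of the augmented initial state. It must match what the closed loop actually feeds back, $E\tilde{u}_{t}$ by \eqref{eq: augmented initial state repeated}. This holds because the second block row of $\hat{x}_{t+1}=\hat{A}\hat{x}_t+\hat{B}_u\tilde{u}_t+\hat{B}_d\tilde{d}$ gives $v_{t+1}=E\tilde{u}_t$, so the predicted $\hat{x}^0_{1|t}$ coincides with the true next augmented state. Induction then gives feasibility for all $t$, exactly as in Proposition \ref{Prop: Recursive feasibility centralised}.

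\emph{Convergence.} Let $\hat{\lambda}(\hat{x})=(\hat{\mu}^\star)^\top\hat{x}$ be the storage function from Proposition \ref{prop: augmented strict dissipativity}, and form the rotated cost $\hat{L}(\hat{x},\tilde{u})=\hat{\ell}(\tilde{\alpha},\hat{x},\tilde{u})-\hat{\ell}^s+\hat{\lambda}(\hat{x})-\hat{\lambda}(\hat{A}\hat{x}+\hat{B}_u\tilde{u}+\hat{B}_d\tilde{d})$ with terminal cost $\hat{\lambda}$. The telescoping argument of the lemma preceding Theorem \ref{thm: Asymptotic convergence} does not use the input-only structure, so the optimisers are unchanged by this rotation. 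Take $V^0$ as in \eqref{eq: Lyapunov-like function}. Comparing with the shifted candidate gives $V^0(\hat{x}_{t+1})-V^0(\hat{x}_t)\le \hat{L}(\hat{x}^0_{K|t},\tilde{u})-\hat{L}(\hat{x}_t,\tilde{u}_{0|t})$.

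The main point to verify is that the appended term vanishes, $\hat{L}(\hat{x}^0_{K|t},\tilde{u})=0$. In the original theorem this followed because the cost depended only on $u$. Here it follows because the pair lies in $\hat{\mathcal Z}^s$: the state is fixed, so the storage terms cancel, and $\hat{\ell}=\hat{\ell}^s$ by the definition \eqref{eq: augmented steady state set}. Hence $V^0$ is nonincreasing, with decrease at least $\hat{\rho}(\|(\hat{x}_t,\tilde{u}_{0|t})\|_{\hat{\mathcal Z}^s})$.

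$V^0$ is bounded below on the compact set $\hat{\mathcal X}\times\tilde{\mathcal U}^K$, since $\hat{\ell}$ and $\hat{\lambda}$ are continuous there. Summing the decrease over $t$ gives $\sum_t\hat{\rho}(\cdot)<\infty$, so $\hat{\rho}(\|(\hat{x}_t,\tilde{u}_{0|t})\|_{\hat{\mathcal Z}^s})\to0$. To pass from this to convergence of the distance we need a lower bound on $\hat{\rho}$ by a class-$\mathcal K$ function of the distance, which Lemma \ref{lem:lower-bounding-k-infinity} provides on the compact domain; then $\|(\hat{x}_t,\tilde{u}_{0|t})\|_{\hat{\mathcal Z}^s}\to0$. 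Finally, $\hat{\ell}$ is uniformly continuous on the compact domain and identically equal to $\hat{\ell}^s$ on $\hat{\mathcal Z}^s$, so $\hat{\ell}(\tilde{\alpha},\hat{x}_t,\tilde{u}_{0|t})\to\hat{\ell}^s$.
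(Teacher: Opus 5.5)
Your proposal is correct and follows the paper's proof. The paper invokes the shifted-sequence argument of Proposition~\ref{Prop: Recursive feasibility centralised} with terminal set $\hat{\mathcal X}^s$, and then the convergence argument of Theorem~\ref{thm: Asymptotic convergence} (via Corollary~\ref{cor: T-step EMPC asymptotic convergence}) using the storage function from Proposition~\ref{prop: augmented strict dissipativity}; you simply make explicit what the paper leaves implicit: the consistency $v_{t+1}=E\tilde u_t$, the vanishing of the appended rotated stage cost because the terminal pair lies in $\hat{\mathcal Z}^s$ even though $\hat\ell$ depends on the state, and the summability step that turns the decrease into convergence of the distance.
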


\noindent \textbf{Proof.} The recursive feasibility follows by the same shifted-sequence argument as in
Proposition~1, with $\hat{\mathcal X}^s$ as the terminal set. The asymptotic convergence
follows from Proposition~\ref{prop: augmented strict dissipativity} by the same argument as
in Corollary~\ref{cor: T-step EMPC asymptotic convergence}. $\square$

If asymptotic stability of a specific equilibrium is desired, the stage cost can be modified by adding a small positive term $\varepsilon \hat{x}^\top \hat{x}$, with $\varepsilon>0$. Similar to Section \ref{sec: asymptotic stability of modified xs}, $\varepsilon$ can be chosen small so that the difference between the optimal costs of the modified and unmodified steady-state problems is arbitrarily small. 

\begin{corollary}\label{cor: augmented point dissipativity}
Given $\varepsilon>0$, define the modified stage cost
\begin{equation} \label{eq: augmented modified stage cost}
    \hat{\ell}_\varepsilon(\tilde{\alpha},\hat{x},\tilde{u})
=
\hat{\ell}(\tilde{\alpha},\hat{x},\tilde{u})
+
\varepsilon \hat{x}^\top \hat{x}.
\end{equation}
Let $(\hat{x}_\varepsilon^s,\tilde{u}_\varepsilon^s)$ denote the unique optimal solution of the corresponding modified steady-state problem. Let Assumption \ref{ass: slater condition} hold. Then, the augmented system is strictly dissipative, i.e., 
\begin{subequations} \label{eq: modified strict dissipativity def with respect xs 2}
\begin{align}
    &\hat{L}(\tilde{\alpha},\hat{x},\tilde{u})\geq \hat{\rho}(||\hat{x}||_{\hat{x}^s_\varepsilon}) \label{eq: rotated augmented modified stage cost}\\
    \text{where} &\\
    &\hat{L}(\tilde{\alpha},\hat{x},\tilde{u})= \hat{\mu}^\top_\varepsilon(\hat{x}-\hat{A}\hat{x}-\hat{B}_u \tilde{u}-\hat{B}_d\tilde{d})+ \nonumber\\
    &\hspace{1.5 cm}\hat{\ell}_\varepsilon(\tilde{\alpha},\hat{x},\tilde{u})-\hat{\ell}^s_\varepsilon \nonumber
    + \varepsilon(\hat{x}^\top \hat{x}-(\hat{x}_\varepsilon^s)^\top \hat{x}^s_\varepsilon) , \label{eq: modified rotated stage cost}
\end{align}
\end{subequations}
for all $(x,u) \in \mathcal{X}\times \mathcal{U}$, where $\hat{\mu}_\varepsilon$ is the optimal Lagrange multiplier associated with the steady state constraint in the corresponding augmented modified steady state problem.  
\end{corollary}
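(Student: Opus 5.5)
The plan mirrors the proofs of Proposition \ref{prop: strict dissipativity of lifted} and Proposition \ref{prop: augmented strict dissipativity}, with the $\varepsilon$-term supplying the strictness that is missing in the unmodified problem.

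First I will check that the modified steady-state problem is convex. The objective is $\hat{\ell}_\varepsilon$ in \eqref{eq: augmented modified stage cost}. It is the convex function $\hat{\ell}$ from the proof of Proposition \ref{prop: augmented strict dissipativity}, plus $\varepsilon\hat{x}^\top\hat{x}$, which is strictly convex in $\hat{x}$. The constraints are the affine equality \eqref{eq: augmented steady state problem b} and the convex sets $\hat{\mathcal X}$ and $\tilde{\mathcal U}$.

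Strong duality transfers from Assumption \ref{ass: slater condition}, because the feasible set is unchanged by the modification (see the remark preceding Proposition \ref{prop: augmented strict dissipativity}). Hence an optimal multiplier $\hat{\mu}_\varepsilon$ for \eqref{eq: augmented steady state problem b} exists. By the saddle-point property, the Lagrangian
\[
\Phi(\hat{x},\tilde{u}) := \hat{\ell}_\varepsilon(\tilde{\alpha},\hat{x},\tilde{u}) - \hat{\ell}^s_\varepsilon + \hat{\mu}_\varepsilon^\top\bigl(\hat{x}-\hat{A}\hat{x}-\hat{B}_u\tilde{u}-\hat{B}_d\tilde{d}\bigr)
\]
is nonnegative on $\hat{\mathcal X}\times\tilde{\mathcal U}$. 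It attains its minimum value $0$ at $(\hat{x}^s_\varepsilon,\tilde{u}^s_\varepsilon)$.

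The key step is to upgrade nonnegativity to strict positivity away from $\hat{x}^s_\varepsilon$.
\begin{itemize}
\item $\Phi$ is the sum of a convex function of $(\hat{x},\tilde{u})$ and a function that is strictly convex in $\hat{x}$.
\item Suppose $\Phi(\hat{x},\tilde{u})=0$ with $\hat{x}\neq\hat{x}^s_\varepsilon$. Then the midpoint of $(\hat{x},\tilde{u})$ and $(\hat{x}^s_\varepsilon,\tilde{u}^s_\varepsilon)$ lies in the convex set $\hat{\mathcal X}\times\tilde{\mathcal U}$.
\item At that midpoint, strict convexity in $\hat{x}$ gives $\Phi<0$, contradicting nonnegativity.
\end{itemize}
Therefore $\Phi(\hat{x},\tilde{u})>0$ whenever $\hat{x}\neq\hat{x}^s_\varepsilon$. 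More quantitatively, the strong convexity of $\varepsilon\hat{x}^\top\hat{x}$ together with first-order optimality of $\Phi$ at its minimiser yields the explicit bound
\[
\Phi(\hat{x},\tilde{u}) \geq \varepsilon\,\|\hat{x}-\hat{x}^s_\varepsilon\|^2 .
\]
This directly gives a class-$\mathcal{K}$ lower bound. Otherwise I would fall back on the compactness argument of Lemma \ref{lem:lower-bounding-k-infinity}, minimising $\Phi$ over $\tilde{u}$ for each fixed $\hat{x}$ in compact $\hat{\mathcal X}$.

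Finally, I must relate $\Phi$ to $\hat{L}$ as written in \eqref{eq: modified strict dissipativity def with respect xs 2}, and this is the step I expect to be the main obstacle. As written, $\hat{L}$ contains the extra term $\varepsilon(\hat{x}^\top\hat{x}-(\hat{x}^s_\varepsilon)^\top\hat{x}^s_\varepsilon)$ on top of $\hat{\ell}_\varepsilon$, which seems to double-count the $\varepsilon$ penalty. I would read it as in Assumption \ref{Ass: Modified strict dissipativity}, i.e. as $\hat{\ell}-\hat{\ell}^s$ with the constant split appropriately, so that $\hat{L}=\Phi$ up to the constant offset, and check that the offset vanishes at $\hat{x}^s_\varepsilon$.

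If the literal form is kept, the bound still holds. The extra term is nonnegative on the sublevel region $\|\hat{x}\|\geq\|\hat{x}^s_\varepsilon\|$. Elsewhere it is bounded below by $-\varepsilon\,\|\hat{x}-\hat{x}^s_\varepsilon\|\,\bigl(\|\hat{x}\|+\|\hat{x}^s_\varepsilon\|\bigr)$, which must be dominated by the quadratic bound, and it is at this point that the bookkeeping needs care. Setting $\hat{\rho}(s)=\varepsilon s^2$, or the $\mathcal{K}$ function from Lemma \ref{lem:lower-bounding-k-infinity}, then gives \eqref{eq: rotated augmented modified stage cost}.
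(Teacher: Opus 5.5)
Your main line is the paper's argument. You take the optimal multiplier $\hat\mu_\varepsilon$ of the modified augmented steady-state problem as a linear storage function, get $\Phi\ge0$ on $\hat{\mathcal X}\times\tilde{\mathcal U}$ from strong duality, and extract strictness from the $\varepsilon$-term. It is in fact more complete than the paper's one-line proof. The paper asserts that the rotated cost vanishes only at the unique optimum and then invokes Lemma~\ref{lem:lower-bounding-k-infinity}. However, uniqueness of the primal optimum does not by itself rule out zeros of the Lagrangian with $\hat x\neq\hat x^s_\varepsilon$. Your convex-combination argument does rule them out. It also gives the explicit bound $\Phi(\hat x,\tilde u)\ge\varepsilon\|\hat x-\hat x^s_\varepsilon\|^2$, so $\hat\rho(s)=\varepsilon s^2$ works without the lemma. (A small imprecision: strong duality is not determined by the feasible set alone. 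What carries over to the modified objective is the constraint qualification behind Assumption~\ref{ass: slater condition}, i.e.\ polyhedral constraints or Slater's condition, which is also what the paper tacitly relies on.)

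Your final paragraph, however, asserts something false. Take $\hat\ell^s_\varepsilon$ to be the optimal value of the modified problem, as in your $\Phi$. Then the literal $\hat L$ equals $\Phi+\varepsilon(\hat x^\top\hat x-(\hat x^s_\varepsilon)^\top\hat x^s_\varepsilon)$. Writing $d=\hat x-\hat x^s_\varepsilon$, the extra term is $\varepsilon\|d\|^2+2\varepsilon d^\top\hat x^s_\varepsilon$. Its linear part cannot be dominated by $\varepsilon\|d\|^2$ as $d\to0$, and this is not merely a lossy estimate.

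Suppose $\hat x^s_\varepsilon\neq0$ lies in the interior of $\hat{\mathcal X}$. Then $\nabla_{\hat x}\Phi(\hat x^s_\varepsilon,\tilde u^s_\varepsilon)=0$. Since $\Phi$ is quadratic, the point $\hat x=(1-\delta)\hat x^s_\varepsilon$, $\tilde u=\tilde u^s_\varepsilon$ gives $\Phi=O(\delta^2)$, while the extra term equals $-\varepsilon(2\delta-\delta^2)\|\hat x^s_\varepsilon\|^2$. So the literal $\hat L$ is strictly negative for small $\delta>0$, and no nonnegative $\hat\rho$ exists.

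Under the other possible reading, $\hat\ell^s_\varepsilon$ is the unmodified cost at $(\hat x^s_\varepsilon,\tilde u^s_\varepsilon)$. Then the literal $\hat L=\Phi+\varepsilon\hat x^\top\hat x$ does satisfy the bound. But it equals $\varepsilon\|\hat x^s_\varepsilon\|^2>0$ at the equilibrium, which makes it useless for Theorem~\ref{thm: augmented asymptotic stability}.

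Drop the fallback and state explicitly that you prove the corrected inequality with the penalty counted once, $\hat L=\Phi$. That is the version the paper's proof actually establishes. It is also the only one its stability theorem can use, since that theorem needs $\hat V^0(\hat x^s_\varepsilon)=0$.
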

\noindent\textbf{Proof.} The proof follows the same lines as Proposition \ref{prop: augmented strict dissipativity}. The modified stage cost $\hat{\ell}_\varepsilon(\tilde{\alpha},\hat{x},\tilde{u})$ is convex in $(\hat{x},\tilde{u})$, and the additional term $\varepsilon \hat{x}^\top \hat{x}$ makes the modified steady-state problem admit a unique optimal solution $(\hat{x}_\varepsilon^s,\tilde{u}_\varepsilon^s)$. By strong duality, an optimal Lagrange multiplier is used to define a storage function, and the same argument as in Proposition \ref{prop: augmented strict dissipativity} gives strict dissipativity with respect to $\hat{x}^s_\varepsilon$.
\hfill $\square$

\begin{theorem}\label{thm: augmented asymptotic stability}
Let Assumption \ref{ass: slater condition} hold. Consider the augmented EMPC problem in \eqref{eq: augmented TEMPC} with modified stage cost in \eqref{eq: augmented modified stage cost} and terminal constraint $\hat{x}_{K|t}=\hat{x}_\varepsilon^s$. Then, $\hat{x}_\varepsilon^s$ is an asymptotically stable equilibrium of the closed-loop augmented system with region of attraction in \eqref{eq: Region of attraction for augmented TEMPC}.
\end{theorem}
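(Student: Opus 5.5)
The plan is to reduce the statement to Theorem \ref{thm: AS for the modified}, applied to the augmented lifted system. The augmented dynamics \eqref{eq: augmented model dynamics WDN} are time invariant and linear, and the modified stage cost $\hat{\ell}_\varepsilon$ in \eqref{eq: augmented modified stage cost} is continuous. The constraint set $\hat{\mathcal X}\times\tilde{\mathcal U}$ is compact and convex. The modified augmented EMPC with terminal equality $\hat{x}_{K|t}=\hat{x}^s_\varepsilon$ therefore has exactly the structure of \eqref{eq: Modified EMPC Formulation}, with the state $\hat x$ and input $\tilde u$.

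The one structural difference is that $\hat{\ell}$ now depends on the state through the $\Delta\tilde u$ term. This does not affect the Lyapunov argument, because the rotated stage cost already contains state terms. The dissipativity hypothesis of Theorem \ref{thm: AS for the modified} (Assumption \ref{Ass: Modified strict dissipativity}) is supplied by Corollary \ref{cor: augmented point dissipativity}, with storage function $\hat{\lambda}_\varepsilon(\hat x)=\hat{\mu}_\varepsilon^\top\hat x$.

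The steps, in order, are as follows.

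First, show recursive feasibility on $\hat{\mathcal X}_K$ (with the terminal equality constraint). Shift the optimal sequence and append $\tilde u^s_\varepsilon$. This keeps the terminal state at $\hat x^s_\varepsilon$ because $\hat x^s_\varepsilon=\hat A\hat x^s_\varepsilon+\hat B_u\tilde u^s_\varepsilon+\hat B_d\tilde d$, as in Proposition \ref{Prop: Recursive feasibility centralised}. Hence $\hat{\mathcal X}_K$ is positively invariant, which Lemma \ref{lem: AS theorem from Khalil} requires.

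Second, rewrite the EMPC objective with the rotated cost $\hat L$. The storage terms telescope to $\hat\lambda_\varepsilon(\hat x_t)-\hat\lambda_\varepsilon(\hat x^s_\varepsilon)$ plus a constant, so the minimiser is unchanged. Define $\bar V^0(\hat x_t)=\sum_{i=0}^{K-1}\hat L(\tilde\alpha,\hat x^0_{i|t},\tilde u^0_{i|t})$.

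Third, establish the lower bound $\bar V^0(\hat x)\ge\hat L(\tilde\alpha,\hat x,\tilde u^0_{0|t})\ge\hat\rho(\|\hat x\|_{\hat x^s_\varepsilon})$. This follows from nonnegativity of every term in the sum and from \eqref{eq: rotated augmented modified stage cost}. Lemma \ref{lem:lower-bounding-k-infinity} then converts $\hat\rho$ to a class $\mathcal K$ bound on the compact set.

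Fourth, establish the descent property. Using the shifted candidate, and noting that $\hat L(\tilde\alpha,\hat x^s_\varepsilon,\tilde u^s_\varepsilon)=0$ by optimality and complementarity at the steady state, we get $\bar V^0(\hat x_{t+1})-\bar V^0(\hat x_t)\le-\hat L(\tilde\alpha,\hat x_t,\tilde u^0_{0|t})\le-\hat\rho(\|\hat x_t\|_{\hat x^s_\varepsilon})$.

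The main obstacle is the class $\mathcal K$ upper bound. Lemma \ref{lem: upper bound for Lyap} needs $\bar V^0$ to be continuous at $\hat x^s_\varepsilon$ with $\bar V^0(\hat x^s_\varepsilon)=0$. The value is zero, since the constant sequence $\tilde u^s_\varepsilon$ is feasible and optimal there. Continuity is harder, because with a terminal equality constraint the value function can jump. My first attempt is the following. The problem is a parametric convex program (convex cost, linear dynamics, polyhedral or convex constraints) whose feasible-set map is convex-graph. By standard results, the optimal value is convex on $\hat{\mathcal X}_K$, hence continuous on the relative interior of $\hat{\mathcal X}_K$. If $\hat x^s_\varepsilon$ lies on the boundary, I would instead restrict attention to $\hat{\mathcal X}_K$ itself and use upper semicontinuity, which holds because the constraints are polyhedral in the case study. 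Upper semicontinuity, together with nonnegativity, gives continuity at the minimiser. Once continuity is secured, Definition \ref{def: Lyapunov function definition} holds and Lemma \ref{lem: AS theorem from Khalil} yields asymptotic stability of $\hat x^s_\varepsilon$ with region of attraction $\hat{\mathcal X}_K$.
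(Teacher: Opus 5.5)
Your proposal is correct and follows the paper's proof essentially step for step: the optimal rotated cost as Lyapunov candidate, the lower bound from nonnegativity of each term together with Corollary~\ref{cor: augmented point dissipativity}, the descent inequality from the shifted sequence with $\tilde{u}^s_\varepsilon$ appended, and the upper bound from Lemma~\ref{lem: upper bound for Lyap}. The only real difference is that the paper simply asserts continuity of $\hat{V}^0$, whereas you justify it at $\hat{x}^s_\varepsilon$ via convexity of the value function (continuity on the relative interior of the feasible set, and upper semicontinuity on a polytope plus nonnegativity at the minimiser), which settles the step for the box constraints of the case study; for general non-polyhedral $\mathcal{X},\mathcal{U}$ with $\hat{x}^s_\varepsilon$ on the relative boundary, neither your argument nor the paper's assertion closes it.
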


\noindent\textbf{Proof.}
The proof is similar to that of Theorem \ref{thm: AS for the modified}. Let the optimal solution at time $t$ be
\[
\{\hat{x}_{0|t}^0,\hat{x}_{1|t}^0,\ldots,\hat{x}_{K|t}^0=\hat{x}_\varepsilon^s\},
\qquad
\{\tilde{u}_{0|t}^0,\tilde{u}_{1|t}^0,\ldots,\tilde{u}_{K-1|t}^0\},
\]
with $\hat{x}_{0|t}^0=\hat{x}_t$. Define the Lyapunov candidate function as
\[
\hat{V}^0(\hat{x}_t)
=
\sum_{k=0}^{K-1}\hat{L}_\varepsilon(\hat{x}_{k|t}^0,\tilde{u}_{k|t}^0).
\]
Since $\hat{L}_\varepsilon(\hat{x},\tilde{u})\ge 0$ from \eqref{eq: rotated augmented modified stage cost}, each term in the sum is nonnegative, and hence, it can be written from Corollary \ref{cor: augmented point dissipativity} that
\[
\hat{V}^0(\hat{x}_t)
\ge
\hat{L}_\varepsilon(\hat{x}_t,\tilde{u}_{0|t}^0)
\ge
\hat{\rho}(\|\hat{x}_t\|_{\hat{x}_\varepsilon^s}).
\]
Moreover, $\hat{V}^0(\cdot)$ is continuous and $\hat{V}^0(\hat{x}_\varepsilon^s)=0$. Thus, there exists a class-$\mathcal K$ function $\hat{\alpha}$ such that
\[
\hat{V}^0(\hat{x}_t)\le \hat{\alpha}(\|\hat{x}_t\|_{\hat{x}_\varepsilon^s}),
\]
from Lemma \ref{lem: upper bound for Lyap}. 
Now consider the shifted sequences
\[
\{\hat{x}_{1|t}^0,\hat{x}_{2|t}^0,\ldots,\hat{x}_{K|t}^0,\hat{x}_\varepsilon^s\},
\qquad
\{\tilde{u}_{1|t}^0,\tilde{u}_{2|t}^0,\ldots,\tilde{u}_{K-1|t}^0,\tilde{u}_\varepsilon^s\}.
\]
Denote the associated cost by $\hat{V}(\hat{x}_{t+1})$. By optimality,
\[
\hat{V}^0(\hat{x}_{t+1})-\hat{V}^0(\hat{x}_t)
\le
\hat{V}(\hat{x}_{t+1})-\hat{V}^0(\hat{x}_t).
\]
Using the shifted sequences, we have
\[
\hat{V}^0(\hat{x}_{t+1})-\hat{V}^0(\hat{x}_t)
\le
-\hat{L}_\varepsilon(\hat{x}_t,\tilde{u}_{0|t}^0)
\le
-\hat{\rho}(\|\hat{x}_t\|_{\hat{x}_\varepsilon^s}).
\]
Therefore,
\[
\hat{\rho}(\|\hat{x}_t\|_{\hat{x}_\varepsilon^s})
\le
\hat{V}^0(\hat{x}_t)
\le
\hat{\alpha}(\|\hat{x}_t\|_{\hat{x}_\varepsilon^s}),
\]
and
\[
\hat{V}^0(\hat{x}_{t+1})-\hat{V}^0(\hat{x}_t)
\le
-\hat{\rho}(\|\hat{x}_t\|_{\hat{x}_\varepsilon^s}).
\]
Hence, $\hat{V}^0$ is a Lyapunov function for the closed-loop augmented system with region of attraction \eqref{eq: Region of attraction for augmented TEMPC}. Hence, $\hat{x}_\varepsilon^s$ is an asymptotically stable equilibrium.
\hfill $\square$

\begin{remark}
Asymptotic stability of the augmented state $\hat{x}=(\tilde{x},v)$ implies that $\tilde{x}_t \to \tilde{x}_\varepsilon^s$ and $v_t \to v_\varepsilon^s$. From the augmented dynamics, $v_{t+1} = E\tilde{u}_t$, and hence $E\tilde{u}_t \to v_\varepsilon^s = E\tilde{u}_\varepsilon^s$. Since the steady-state solution is unique in this case, $\tilde{u}_t \to \tilde{u}_\varepsilon^s$.
\end{remark}

\end{document}